\documentclass[11pt]{amsart}

\usepackage{amsmath,amssymb,latexsym,soul,cite,mathrsfs}
\usepackage{tikz}
\usepackage{color,enumitem,graphicx}
\usepackage[colorlinks=true,urlcolor=blue,
citecolor=red,linkcolor=blue,linktocpage,pdfpagelabels,
bookmarksnumbered,bookmarksopen]{hyperref}
\usepackage[english]{babel}

\usepackage[left=2.7cm,right=2.7cm,top=2.8cm,bottom=2.8cm]{geometry}
\usepackage[hyperpageref]{backref}

\usepackage[colorinlistoftodos]{todonotes}
\makeatletter
\providecommand\@dotsep{5}
\def\listtodoname{List of Todos}
\def\listoftodos{\@starttoc{tdo}\listtodoname}
\makeatother

\usepackage[notref,notcite]{showkeys}

\numberwithin{equation}{section}





\newtheorem{theorem}{Theorem}[section]
\newtheorem{definition}{Definition}[section]
\newtheorem{proposition}[theorem]{Proposition}
\newtheorem{lemma}[theorem]{Lemma}
\newtheorem{corollary}[theorem]{Corollary}

\newtheorem{remark}{Remark}

\begin{document}

\title[Fractional $\kappa(\xi)$-Kirchhoff-type equation]{Existence and multiplicity of solutions for fractional $\kappa(\xi)$-Kirchhoff-type equation}

\author{J. Vanterler da C. Sousa, Kishor D. Kucche and Juan J. Nieto}

\address[J. Vanterler da C. Sousa]
{\newline\indent Aerospace Engineering, PPGEA-UEMA
\newline\indent
Department of Mathematics, DEMATI-UEMA
\newline\indent
São Luís, MA 65054, Brazil.}
\email{\href{vanterler@ime.unicamp.br}{vanterler@ime.unicamp.br}}

\address[Kishor D. Kucche ]
{\newline\indent
Department of Mathematics, Shivaji University, 
\newline\indent
Kolhapur-416 004, Maharashtra, India.}
\email{\href{kdkucche@gmail.com}{kdkucche@gmail.com}}

\address[Juan J. Nieto]
{\newline\indent
CITMAga, Departamento de Estatística, Análise Matemática e Optimización, \newline\indent Universidade de
Santiago de Compostela,  \newline\indent 15782 Santiago de Compostela, Spain}
\email{\href{juanjose.nieto.roig@usc.es}{juanjose.nieto.roig@usc.es}}

\pretolerance10000


\begin{abstract} 
In this paper, we aim to tackle the questions of existence and multiplicity of solutions to a new class of $\kappa(\xi)$-Kirchhoff-type equation utilizing a variational approach. Further, we research the results from the theory of variable exponent Sobolev spaces and from the theory of space $\psi$-fractional $\mathcal{H}^{\mu,\nu;\,\psi}_{\kappa(\xi)}(\Lambda)$. In this sense, we present a few special cases and remark on the outcomes explored.
\end{abstract}

\subjclass[2010]{35R11,35A15,47J30,35D05,35J60.} 
\keywords{Variational method; Fractional $p(x)$-Kirchhoff-type equation, Nonlocal problems.}
\maketitle
\section{Introduction and motivation}

{\color{red} In this paper, we concern the following Kirchhoff's fractional $\kappa(\xi)$-Laplacian equation
\begin{equation}\label{eq1}
\left\{ 
\begin{array}{rcl}
\mathfrak{M}\left(\displaystyle\int_{\Lambda}\frac{1}{\kappa(\xi)} \left\vert^{\rm H}\mathfrak{D}^{\mu,\nu;\,\psi}_{0+}\phi \right\vert ^{\kappa(\xi)} d \xi\right){\bf L}^{\mu,\nu;\,\psi}_{\kappa(\xi)}\phi&=&\mathfrak{g}(\xi,\phi),\,\, in\,\,\Lambda =[0,T]\times [0,T], \\ 
\phi&=&0,\,\, on\,\,\partial\Lambda 
\end{array}%
\right.
\end{equation}}
where
\begin{equation}\label{se}
    {\bf L}^{\mu,\nu;\,\psi}_{\kappa(\xi)}\phi=\,\,^{\rm H}\mathfrak{D}^{\mu,\nu;\,\psi}_{T}\left(\left\vert^{\rm H}\mathfrak{D}^{\mu,\nu;\,\psi}_{0+}\phi\right\vert^{\kappa(\xi)-2}~{^{\rm H}\mathfrak{D}^{\mu,\nu;\,\psi}_{0+}}\phi\right),
\end{equation}
$^{\rm H}\mathfrak{D}^{\mu,\nu;\,\psi}_{T}(\cdot)$ and $^{\rm H}\mathfrak{D}^{\mu,\nu;\,\psi}_{0+}(\cdot)$ {\color{red} are $\psi$-Hilfer fractional partial derivatives of order $\frac{1}{\kappa}<\mu< 1$} and type $0\leq\nu\leq 1$. Further,   $\kappa= \kappa(\xi)\in C(\bar{\Lambda})$,  $1<\kappa^{-}=\underset{\Lambda}{\inf}~ \kappa(\xi)\leq \kappa^{+}=\underset{\Lambda}{\sup}~\kappa(\xi)<2$, $\mathfrak{M}(t)$ is a continuous function and $\mathfrak{g}(\xi,\phi):\Lambda\times\mathbb{R}\rightarrow\mathbb{R}$ is the Caratheodory function.  Note that Eq.(\ref{se}), is a generalization of ${\bf L}^{\mu,\nu;\,\psi}_{\kappa}(\cdot)$ when $\kappa(\xi)=\kappa$ is a constant. 

The Kirchhoff proposed a model given by equation
\begin{equation*}
    \rho \frac{\partial^{2} u}{\partial t^{2}}- \left(\frac{\rho_{0}}{h}+ \frac{E}{2L}\int_{0}^{L} \left\vert \frac{\partial u}{\partial x}\right\vert^{2} dx\right) \frac{\partial^{2} u}{\partial x^{2}}=0,
\end{equation*}
where $\rho, ~\rho_{0}, ~L, ~h,~ E$ are constants, which extends the classical D'Alembert's wave equation.

{\color{red}The operator $$\Delta_{p(x)} u:= {\rm div}\left( |\nabla u|^{p(x)-2} |\nabla u| \right)$$ is said to be the $p(x)$-Laplacian}, and it becomes $p$-Laplacian when $p(x)=p$. The study of mathematical problems with variable exponents is very interesting. We can highlight the existence and multiplicity problem of the solution of $p(x)$-Laplacian equation, $p(x)$-Kirchhoff and $p$-Kirchhoff both in the classical and in the practical sense \cite{He,Arosio,Correa ,Correa1,Dai,Dai1,Fan,Fan2,Fan1}. See also the problems involving fractional operators and the references therein \cite{Mingqi,Pucci,Mingqi1,Pucci1}. We can also highlight fractional differential equation problems with $p$-Laplacian using variational methods, in particular, Nehari manifold \cite{Srivastava,Sousa1,Sousa80,Sousa,Sousa2,Sousa3,Ezati,Ezati1}.

In 2006, Correa and Figueiredo \cite{Correa1} investigated the existence of positive solutions to the class of problems of the $p$-Kirchhoff type
\begin{align*}
\left[-M\left(\int_{\Lambda}\left\vert \nabla u \right\vert ^{p} d x\right)\right]^{p-1} \Delta_{p} u&=f(x,u),\,\, \mbox{in}\,\,\Lambda\notag\\
u&=0,\,\,\mbox{on}\,\,\partial\Lambda,
\end{align*} 
and
\begin{align*}
\left[-M\left(\int_{\Lambda}\left\vert \nabla u \right\vert ^{p} d x\right)\right]^{p-1} \Delta_{p} u&=f(x,u)+\lambda |u|^{s-2}u,\,\, \mbox{in}\,\,\Lambda\notag\\
u&=0,\,\,\mbox{on}\,\,\partial\Lambda, 
\end{align*} 
where $\Lambda$ is a bounded smooth domain of $\mathbb{R}^{N}$, $1<p<N$, $s\geq p^{*}=\dfrac{pN}{N-p}$ and $M,f$ are continuous functions.

In 2010, Fan \cite{Fan2} considered the nonlocal $p(x)$-Laplacian Dirichlet problems with non-variational
\begin{equation*}
    -A(u) \Delta_{p(x)} u(x)= B(u) f(x,u(x))\,\,in\,\,\Lambda,\,\,u|_{\partial\Lambda}=0,
\end{equation*}
and with variational form
{\color{blue}\begin{align}\label{(222)}
-a\left(\int_{\Lambda}\frac{1}{p(x)}\left\vert \nabla u \right\vert ^{p(x)} d x\right) \Delta_{p(x)} u(x)&=b\left(\int_{\Lambda} F(x,u) dx\right)f(x,u(x))\,\, \mbox{in}\,\,\Lambda,\,\,u|_{\partial\Lambda}=0,
\end{align} }
where $$F(x,t)=\displaystyle\int_{0}^{t} f(x,s) ds, $$ and $a$ is allowed to be singular at zero. To obtain the existence and uniqueness of solutions for the problem (\ref{(222)}), the authors used variational methods, especially Mountain pass geometry.

Problems involving Kirchhoff-type with variable and non-variable exponents are attracting attention and gaining prominence in several research groups for numerous theoretical and practical questions \cite{Tang,Naimen,Dai2} and the references therein. On the other hand, it is also worth mentioning Kirchhoff's problems with fractional operators, which over the years has been increasing exponentially \cite{Xiang1,Fiscella,Ambrosio}. The $p(x)$-Laplacian {\color{blue} possesses} more complex nonlinearity which raises some of the essential difficulties, for example, it is inhomogeneous. 

{\color{blue} Dai and Hao \cite{Dai} discussed} the existence of a solution for a $p(x)$-Kirchhoff-type equation given
\begin{align*}
-M\left(\int_{\Lambda}\frac{1}{p(x)}\left\vert \nabla u \right\vert ^{p} d x\right) div \left(|\nabla|^{p(x)-2} \nabla u \right)&=f(x,u)\,\, \mbox{in}\,\,\Lambda,\notag\\
u&=0,\,\,\mbox{on}\,\,\partial\Lambda.
\end{align*} 

Motivated by the ideas found in \cite{Correa1,Fan2,Dai}, we study the existence and multiplicity of {\color{blue}solutions} for problem (\ref{eq1}) by supposing the following conditions:
\begin{itemize}
\item [($f_{0}$)] $\mathfrak{g}: \Lambda\times\mathbb{R}\rightarrow\mathbb{R}$ satisfies Caratheodory condition and
\begin{align}\label{ineq1}
	\left\vert\mathfrak{g}(\xi,t)\right\vert\leq c(1+|t|^{\tilde{\zeta}(\xi)-1}),
\end{align} 
where $\tilde{\zeta}\in C_{+}(\bar{\Lambda})$ and $\tilde{\zeta}(\xi)<\kappa^{*}_{\mu}(\xi)$ for all  $\xi\in\Lambda$.

\item [($C_{0}$)] there exists $m_{0}>0~ such ~that~ \mathfrak{M}(t)\geq m_{0}$.

\item [($C_{1}$)] there exists $  0<\omega <1$  such  that $\widehat{\mathfrak{M}}(t)\geq (1-\omega)\mathfrak{M}(t)t$, {\color{blue} where $\widehat{\mathfrak{M}}(t)=\displaystyle\int_{0}^{t} \mathfrak{M}(s)ds$.}

{\color{red}
\item [($f_{1}$)] Ambrosetti-Rabinowitz condition i.e. there exist $T>0,\,\theta > \dfrac{\kappa^{+}}{1-\omega}$ such that 
\begin{equation}\label{eq3.8}
    0<\theta ~G(\xi,t)\leq t \mathfrak{g}(\xi,t),\,\,for\,\,all\,\,|t|\geq T, a. e. \,\xi\in\Lambda,
\end{equation}
where $G(\xi,t):=\displaystyle\int_{0}^{t}\mathfrak{g}(\xi,s)ds$.}

\item [$(f_2)$] $\mathfrak{g}(\xi,t)=o(|t|^{\kappa^{+}-1}), t\to 0, \mbox{for}~ \xi\in \Lambda$ uniformly, where $\zeta^{-} > \kappa^{+}$.

\item [($f_3$)] $\mathfrak{g}(\xi,-t)= -\mathfrak{g}(\xi,t) , \xi\in \Lambda, t\in \mathbb{R}$.

\item [($f_4$)] {\color{blue}$\mathfrak{g}(\xi,t) \geq c|t|^{\gamma(\xi)-1}$,\, $t \to 0$ where} $\gamma \in C_+ (\Lambda), \kappa^{+} < \gamma ^- \leq \gamma ^+ < \dfrac{\kappa^{-}}{1-\omega}$ for a.e. $\xi\in \Lambda$.
\end{itemize}	

Our main results are the following:

\begin{theorem}\label{Teorema3.1} If $\mathfrak{M}$ satisfies {\rm($C_{0}$)} and
\begin{align}\label{ineq2}
|\mathfrak{g}(\xi,t)|\leq c(1+|t|^{\widehat{\beta} -1}),
\end{align} 
where $1\leq \widehat{\beta}<\kappa^{-}$ then problem {\rm(\ref{eq1})} has a weak solution.
\end{theorem}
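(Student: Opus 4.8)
The plan is to obtain the solution as a global minimizer of the associated energy functional via the direct method of the calculus of variations. Working in the reflexive Banach space $\mathcal{H}^{\mu,\nu;\,\psi}_{\kappa(\xi)}(\Lambda)$ (reflexivity being guaranteed by $1<\kappa^{-}\leq\kappa^{+}<2$), I would define
\[
J(\phi)=\widehat{\mathfrak{M}}\!\left(\int_{\Lambda}\frac{1}{\kappa(\xi)}\left\vert{}^{\rm H}\mathfrak{D}^{\mu,\nu;\,\psi}_{0+}\phi\right\vert^{\kappa(\xi)}d\xi\right)-\int_{\Lambda}G(\xi,\phi)\,d\xi,
\]
where $G(\xi,t)=\int_{0}^{t}\mathfrak{g}(\xi,s)\,ds$. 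Writing $\Phi(\phi)=\int_{\Lambda}\frac{1}{\kappa(\xi)}|{}^{\rm H}\mathfrak{D}^{\mu,\nu;\,\psi}_{0+}\phi|^{\kappa(\xi)}d\xi$, the Gateaux derivative of the first term is $\mathfrak{M}(\Phi(\phi))\,{\bf L}^{\mu,\nu;\,\psi}_{\kappa(\xi)}\phi$ (using the fractional integration-by-parts relating ${}^{\rm H}\mathfrak{D}^{\mu,\nu;\,\psi}_{0+}$ and ${}^{\rm H}\mathfrak{D}^{\mu,\nu;\,\psi}_{T}$), so the critical points of $J$ are exactly the weak solutions of (\ref{eq1}). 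The growth bound (\ref{ineq2}) with $1\leq\widehat{\beta}<\kappa^{-}$ and the continuous embeddings of $\mathcal{H}^{\mu,\nu;\,\psi}_{\kappa(\xi)}(\Lambda)$ into the relevant Lebesgue spaces ensure that $J$ is well-defined and of class $C^{1}$; note that for this theorem only $(C_{0})$ is used, not $(C_{1})$.

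First I would establish coercivity of $J$. From $(C_{0})$ we get $\widehat{\mathfrak{M}}(t)\geq m_{0}t$, hence $\widehat{\mathfrak{M}}(\Phi(\phi))\geq \frac{m_{0}}{\kappa^{+}}\int_{\Lambda}|{}^{\rm H}\mathfrak{D}^{\mu,\nu;\,\psi}_{0+}\phi|^{\kappa(\xi)}d\xi$, and by the standard modular--norm inequality this is $\geq \frac{m_{0}}{\kappa^{+}}\|\phi\|^{\kappa^{-}}$ once $\|\phi\|>1$. Integrating (\ref{ineq2}) gives $|G(\xi,t)|\leq c(|t|+|t|^{\widehat{\beta}})$, so using the embeddings $\mathcal{H}^{\mu,\nu;\,\psi}_{\kappa(\xi)}(\Lambda)\hookrightarrow L^{1}(\Lambda)$ and $\hookrightarrow L^{\widehat{\beta}}(\Lambda)$ one bounds $\int_{\Lambda}G(\xi,\phi)\,d\xi\leq C(\|\phi\|+\|\phi\|^{\widehat{\beta}})$. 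Thus $J(\phi)\geq \frac{m_{0}}{\kappa^{+}}\|\phi\|^{\kappa^{-}}-C\|\phi\|^{\widehat{\beta}}-C\|\phi\|$, and since $1\leq\widehat{\beta}<\kappa^{-}$ the dominant term is $\|\phi\|^{\kappa^{-}}$; therefore $J(\phi)\to+\infty$ as $\|\phi\|\to\infty$. This is precisely where the hypothesis $\widehat{\beta}<\kappa^{-}$ is indispensable, as it places the nonlinearity strictly below the growth of the principal part.

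Next I would prove that $J$ is sequentially weakly lower semicontinuous. The map $\Phi$ is convex and strongly continuous, hence weakly lower semicontinuous; composing with the continuous nondecreasing function $\widehat{\mathfrak{M}}$ (nondecreasing because $\mathfrak{M}\geq m_{0}>0$) preserves this property, so $\phi\mapsto\widehat{\mathfrak{M}}(\Phi(\phi))$ is weakly lower semicontinuous. For the nonlinear term, since $\widehat{\beta}<\kappa^{-}\leq\kappa^{*}_{\mu}(\xi)$ is strictly subcritical, the embedding $\mathcal{H}^{\mu,\nu;\,\psi}_{\kappa(\xi)}(\Lambda)\hookrightarrow\hookrightarrow L^{\widehat{\beta}}(\Lambda)$ is compact; thus $\phi_{n}\rightharpoonup\phi$ in $\mathcal{H}^{\mu,\nu;\,\psi}_{\kappa(\xi)}(\Lambda)$ forces $\phi_{n}\to\phi$ strongly in $L^{\widehat{\beta}}(\Lambda)$, and a Nemytskii-operator continuity argument based on (\ref{ineq2}) yields $\int_{\Lambda}G(\xi,\phi_{n})\,d\xi\to\int_{\Lambda}G(\xi,\phi)\,d\xi$. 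Hence the nonlinear term is weakly continuous and $J$, being the difference of a weakly lower semicontinuous functional and a weakly continuous one, is itself weakly lower semicontinuous.

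Finally, a coercive, sequentially weakly lower semicontinuous functional on a reflexive Banach space attains its infimum; let $\phi_{0}$ be a minimizer. Then $J'(\phi_{0})=0$, so $\phi_{0}$ is the desired weak solution of (\ref{eq1}). I expect the main obstacle to be the weak-continuity step for $\int_{\Lambda}G(\xi,\phi)\,d\xi$, since it hinges on the compactness of the subcritical embedding for the $\psi$-fractional variable-exponent space $\mathcal{H}^{\mu,\nu;\,\psi}_{\kappa(\xi)}(\Lambda)$ together with uniform integrability of the Nemytskii operator; once the compact embedding and the modular--norm relations established earlier in the paper are in hand, the remaining estimates are routine.
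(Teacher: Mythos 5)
Your proposal is correct and follows essentially the same route as the paper: coercivity of the energy functional $\mathfrak{E}$ obtained from $(C_{0})$, the integrated growth bound $|G(\xi,t)|\leq c(|t|+|t|^{\widehat{\beta}})$, and the modular--norm inequality, followed by the direct method of the calculus of variations on the reflexive space $\mathcal{H}^{\mu,\nu;\,\psi}_{\kappa(\xi)}(\Lambda)$. The only difference is one of detail: you actually prove the weak lower semicontinuity of $\mathfrak{E}$ (convexity of the modular term composed with the nondecreasing $\widehat{\mathfrak{M}}$, plus compactness of the subcritical embedding for the Nemytskii term), whereas the paper simply invokes this property as asserted in its Section 2.
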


\begin{theorem}\label{Teorema3.2} Assume that $\mathfrak{M}$ satisfies $\rm{(C_{0})-(C_{1})}$ and $\mathfrak{g}$ satisfies ${\normalfont(f_0),(f_1),(f_2)}$. {\color{blue} Then, problem {\rm(\ref{eq1})} has a non-trivial} solution.
\end{theorem}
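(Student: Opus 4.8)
The plan is to realize \eqref{eq1} as an Euler--Lagrange problem and apply the Mountain Pass Theorem of Ambrosetti--Rabinowitz. First I would introduce the energy functional $J:\mathcal{H}^{\mu,\nu;\,\psi}_{\kappa(\xi)}(\Lambda)\to\mathbb{R}$,
\begin{equation*}
J(\phi)=\widehat{\mathfrak{M}}\!\left(\int_{\Lambda}\frac{1}{\kappa(\xi)}\left\vert{}^{\rm H}\mathfrak{D}^{\mu,\nu;\,\psi}_{0+}\phi\right\vert^{\kappa(\xi)}d\xi\right)-\int_{\Lambda}G(\xi,\phi)\,d\xi,
\end{equation*}
and write $\Phi(\phi)=\int_{\Lambda}\frac{1}{\kappa(\xi)}\vert{}^{\rm H}\mathfrak{D}^{\mu,\nu;\,\psi}_{0+}\phi\vert^{\kappa(\xi)}\,d\xi$ for the associated modular, so that $\Phi'=\mathbf{L}^{\mu,\nu;\,\psi}_{\kappa(\xi)}$. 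Using the subcritical growth in $(f_0)$ together with the (compact) embeddings of the variable--exponent fractional space, I would check that $J\in C^1$ with $J'(\phi)v=\mathfrak{M}(\Phi(\phi))\langle\Phi'(\phi),v\rangle-\int_{\Lambda}\mathfrak{g}(\xi,\phi)v\,d\xi$, so that critical points of $J$ coincide with weak solutions of \eqref{eq1}.

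Next I would verify the two geometric hypotheses. For the local structure near the origin, condition $(f_2)$ gives $G(\xi,t)=o(|t|^{\kappa^{+}})$ as $t\to0$, which combined with the subcritical bound in $(f_0)$ yields $G(\xi,t)\le\varepsilon|t|^{\kappa^{+}}+C_\varepsilon|t|^{\tilde\zeta(\xi)}$; feeding this into the modular--norm inequalities and the lower bound $\widehat{\mathfrak{M}}(\Phi(\phi))\ge m_0\Phi(\phi)\ge\frac{m_0}{\kappa^{+}}\|\phi\|^{\kappa^{+}}$ coming from $(C_0)$, and using $\tilde\zeta^{-}>\kappa^{+}$, I would obtain $\rho,\alpha>0$ with $J(\phi)\ge\alpha$ whenever $\|\phi\|=\rho$. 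For the behavior at infinity, integrating $(f_1)$ gives the superlinear bound $G(\xi,t)\ge c_1|t|^{\theta}-c_2$, while $(C_1)$ forces $\widehat{\mathfrak{M}}(t)\le C\,t^{1/(1-\omega)}$; since $\theta>\kappa^{+}/(1-\omega)$, fixing any $\phi_0\ne0$ I would get $J(t\phi_0)\to-\infty$ as $t\to+\infty$, producing the required $e$ with $\|e\|>\rho$ and $J(e)<0$.

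The main obstacle is the Palais--Smale condition, where I would spend most of the effort. Given a sequence $(\phi_n)$ with $J(\phi_n)\to c$ and $J'(\phi_n)\to 0$, I would estimate $J(\phi_n)-\frac{1}{\theta}\langle J'(\phi_n),\phi_n\rangle$. Using the homogeneity identity $\langle\Phi'(\phi),\phi\rangle\le\kappa^{+}\Phi(\phi)$, the Kirchhoff inequality $(C_1)$ and $(C_0)$, the ``elliptic'' part is bounded below by $\big((1-\omega)-\tfrac{\kappa^{+}}{\theta}\big)m_0\Phi(\phi_n)$, and the choice $\theta>\kappa^{+}/(1-\omega)$ makes this constant strictly positive; the Ambrosetti--Rabinowitz inequality $(f_1)$ then controls the nonlinear part from below up to an additive constant. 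This yields $c+o(1)(1+\|\phi_n\|)\ge C\,\Phi(\phi_n)-C'$, whence $\Phi(\phi_n)$, and therefore $\|\phi_n\|$, is bounded. By reflexivity I would pass to a subsequence $\phi_n\rightharpoonup\phi$, use the compact embedding into $L^{\tilde\zeta(\cdot)}(\Lambda)$ to show $\int_{\Lambda}\mathfrak{g}(\xi,\phi_n)(\phi_n-\phi)\,d\xi\to0$, and finally invoke the $(S_+)$ property of $\Phi'$ (equivalently of $\mathbf{L}^{\mu,\nu;\,\psi}_{\kappa(\xi)}$) to upgrade to strong convergence $\phi_n\to\phi$, closing $(PS)$.

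With the geometry and compactness in hand, the Mountain Pass Theorem provides a critical point $\phi$ at the minimax level $c\ge\alpha>0=J(0)$; since $c>0$ this $\phi$ is nontrivial and is the desired weak solution of \eqref{eq1}. The most delicate points I anticipate are the precise balance of exponents in the $(PS)$ estimate, where the three structural constants $\omega$, $\kappa^{+}$ and $\theta$ must combine correctly, and the verification of the $(S_+)$ property in the variable--exponent fractional setting.
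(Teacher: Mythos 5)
Your proposal is correct and follows essentially the same route as the paper: the paper also works with the functional $\mathfrak{E}=\widehat{\mathfrak{M}}\circ\mathcal{I}^{\mu,\nu}-\int_\Lambda G$, verifies the mountain-pass geometry near the origin via $(f_0)$, $(f_2)$, $(C_0)$ and the modular--norm inequalities, obtains $\widehat{\mathfrak{M}}(t)\le ct^{1/(1-\omega)}$ from $(C_1)$ together with $G(\xi,t)\ge c|t|^{\theta}-c$ from $(f_1)$ to drive $\mathfrak{E}(tw)\to-\infty$, and closes with the Mountain Pass Theorem. The only organizational difference is that the paper isolates the Palais--Smale verification (the $\mathfrak{E}-\frac{1}{\theta}\mathfrak{E}'\phi_n$ estimate, boundedness, compact embedding for the nonlinear term, and the $(S_+)$ property of $\mathcal{L}^{\mu,\nu}$) in a separate lemma proved beforehand, whereas you carry it out inline with the same estimates.
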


\begin{theorem}\label{Teorema3.3} Assume that {\rm$(C_{0})$, $(C_{1})$, $(f_0)$} and {\rm$(f_1)$} hold and $\mathfrak{g}$ satisfies the condition {\rm$(f_3)$}. Then, problem {\rm(\ref{eq1})} {\color{blue} has a sequence of} solutions $\{\pm \phi_k\}_{k=1}^{+\infty}$ such that $\mathfrak{E}(\pm \phi_k) \to +\infty$ as $k\to +\infty$.
\end{theorem}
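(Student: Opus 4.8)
The plan is to recover the solutions of \eqref{eq1} as critical points of the energy functional
\[
\mathfrak{E}(\phi)=\widehat{\mathfrak{M}}\!\left(\int_{\Lambda}\frac{1}{\kappa(\xi)}\left|{}^{\rm H}\mathfrak{D}^{\mu,\nu;\,\psi}_{0+}\phi\right|^{\kappa(\xi)}d\xi\right)-\int_{\Lambda}G(\xi,\phi)\,d\xi
\]
on the separable reflexive space $\mathcal{H}^{\mu,\nu;\,\psi}_{\kappa(\xi)}(\Lambda)$, and to apply the $\mathbb{Z}_2$-symmetric Fountain Theorem. Writing $\mathcal{A}(\phi)$ for the modular appearing inside $\widehat{\mathfrak{M}}$, I would first record that $\mathfrak{E}\in C^1$ with
\[
\langle\mathfrak{E}'(\phi),v\rangle=\mathfrak{M}\big(\mathcal{A}(\phi)\big)\int_{\Lambda}\left|{}^{\rm H}\mathfrak{D}^{\mu,\nu;\,\psi}_{0+}\phi\right|^{\kappa(\xi)-2}{}^{\rm H}\mathfrak{D}^{\mu,\nu;\,\psi}_{0+}\phi\;\,{}^{\rm H}\mathfrak{D}^{\mu,\nu;\,\psi}_{0+}v\,d\xi-\int_{\Lambda}\mathfrak{g}(\xi,\phi)\,v\,d\xi,
\]
so that its critical points are exactly the weak solutions of \eqref{eq1}, and that $\mathfrak{E}$ is even because $(f_3)$ forces $G(\xi,\cdot)$ to be even. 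Fixing a Schauder basis $\{e_j\}$ of the space, setting $X_j=\mathbb{R}e_j$, $Y_k=\bigoplus_{j\le k}X_j$ and $Z_k=\overline{\bigoplus_{j\ge k}X_j}$ then puts the problem into the framework required by the theorem.

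Second, I would establish the $(PS)_c$ condition for every $c$. For a Palais--Smale sequence $\{\phi_n\}$, boundedness comes from the Ambrosetti--Rabinowitz mechanism: forming $\mathfrak{E}(\phi_n)-\tfrac1\theta\langle\mathfrak{E}'(\phi_n),\phi_n\rangle$, one uses $(C_1)$ to bound $\widehat{\mathfrak{M}}(\mathcal{A}(\phi_n))$ below by $\tfrac{1-\omega}{\kappa^{+}}\mathfrak{M}(\mathcal{A}(\phi_n))\int_\Lambda|{}^{\rm H}\mathfrak{D}^{\mu,\nu;\,\psi}_{0+}\phi_n|^{\kappa(\xi)}d\xi$, while $(f_1)$ makes $\int_\Lambda\big(\mathfrak{g}(\xi,\phi_n)\phi_n-\theta G(\xi,\phi_n)\big)d\xi$ bounded below; invoking $(C_0)$ one arrives, up to an additive constant, at $\big(\tfrac{1-\omega}{\kappa^{+}}-\tfrac1\theta\big)m_0\int_\Lambda|{}^{\rm H}\mathfrak{D}^{\mu,\nu;\,\psi}_{0+}\phi_n|^{\kappa(\xi)}d\xi$. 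The coefficient is positive precisely because $\theta>\kappa^{+}/(1-\omega)$, so the modular of $\phi_n$ — hence $\|\phi_n\|$ — stays bounded. Extracting a weakly convergent subsequence $\phi_n\rightharpoonup\phi$, the compact embedding $\mathcal{H}^{\mu,\nu;\,\psi}_{\kappa(\xi)}(\Lambda)\hookrightarrow L^{\tilde{\zeta}(\xi)}(\Lambda)$ together with the $(S_+)$ property of $\mathbf{L}^{\mu,\nu;\,\psi}_{\kappa(\xi)}$ upgrades weak to strong convergence, yielding $(PS)_c$.

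Third, I would check the two geometric hypotheses. On $Z_k$, the bound $\widehat{\mathfrak{M}}(t)\ge m_0 t$ from $(C_0)$ and the growth $(f_0)$ give, for $\|u\|=r_k\ge1$,
\[
\mathfrak{E}(u)\ge\frac{m_0}{\kappa^{+}}\|u\|^{\kappa^{-}}-c\,\beta_k^{\tilde{\zeta}^{+}}\|u\|^{\tilde{\zeta}^{+}}-C,\qquad\beta_k:=\sup_{u\in Z_k,\ \|u\|=1}\|u\|_{\tilde{\zeta}(\xi)};
\]
since $\beta_k\to0$ by compactness and $\tilde{\zeta}^{+}>\kappa^{-}$ (a consequence of comparing $(f_0)$ with $(f_1)$), an optimal choice $r_k\to\infty$ drives the infimum of $\mathfrak{E}$ over $\{u\in Z_k:\|u\|=r_k\}$ to $+\infty$. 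For the complementary condition, $(f_1)$ yields $G(\xi,t)\ge c_1|t|^{\theta}-c_2$, while integrating the differential inequality $(C_1)$ gives $\widehat{\mathfrak{M}}(t)\le C\,t^{1/(1-\omega)}$; on the finite-dimensional $Y_k$ all norms are equivalent, so $\mathfrak{E}(u)\le C\|u\|^{\kappa^{+}/(1-\omega)}-c_1'\|u\|^{\theta}+C'\to-\infty$, and one chooses $\rho_k>r_k$ large enough that $\mathfrak{E}\le0$ on $\{u\in Y_k:\|u\|=\rho_k\}$.

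With evenness, $(PS)_c$, and the two geometric conditions verified, the Fountain Theorem produces an unbounded sequence of critical values $c_k\to+\infty$, whose critical points $\phi_k$ and their reflections $-\phi_k$ are the sought solutions with $\mathfrak{E}(\pm\phi_k)\to+\infty$. The main obstacle is the $(PS)_c$ step: the boundedness is the routine Ambrosetti--Rabinowitz estimate, but the passage from weak to strong convergence rests on the $(S_+)$ property of the variable-exponent fractional operator and the compactness of the Sobolev embedding, both of which must be drawn from the structural theory of $\mathcal{H}^{\mu,\nu;\,\psi}_{\kappa(\xi)}(\Lambda)$; the only other delicate point is extracting the polynomial bound $\widehat{\mathfrak{M}}(t)\le C\,t^{1/(1-\omega)}$ from the nonlinear inequality $(C_1)$, which is exactly what balances the Kirchhoff growth against the superlinear exponent $\theta$.
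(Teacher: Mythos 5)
Your proposal is correct and follows essentially the same route as the paper: the Fountain Theorem applied to the even functional $\mathfrak{E}$, with the (PS) condition obtained from the Ambrosetti--Rabinowitz coefficient $\bigl(\tfrac{1-\omega}{\kappa^{+}}-\tfrac{1}{\theta}\bigr)m_{0}>0$ plus the $(S_{+})$ property, the $Z_k$-geometry controlled by $\beta_k\to 0$ and the modular--norm inequalities, and the $Y_k$-geometry controlled by $G(\xi,t)\geq c|t|^{\theta}-c$ together with the bound $\widehat{\mathfrak{M}}(t)\leq Ct^{1/(1-\omega)}$ extracted from $(C_1)$. The only cosmetic difference is that you re-derive the (PS) condition and the polynomial bound on $\widehat{\mathfrak{M}}$ inline, whereas the paper cites them from its Lemma on the (PS) condition and from the proof of its mountain-pass theorem, respectively.
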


\begin{theorem}\label{Teorema3.4} Assume that $(C_{0})$, $(C_{1})$, $(f_0)$, $(f_1)$, $(f_2)$ $(f_3)$ hold and $\mathfrak{g}$ satisfies the condition $(f_4)$. Then, problem {\rm (\ref{eq1})} {\color{blue} has a sequence of solutions $\{\pm v_k\}_{k=1}^{+\infty}$ such that $\mathfrak{E}(\pm v_k) <0$, $\mathfrak{E}(\pm v_k) \to +\infty$ as $k\to 0$.}
\end{theorem}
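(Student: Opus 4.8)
The plan is to deduce Theorem~\ref{Teorema3.4} from the Dual Fountain Theorem of Bartsch--Willem, whose conclusion---a sequence of critical values that are strictly negative and tend to $0$---is precisely the asserted behaviour of $\mathfrak{E}(\pm v_k)$. Two standing hypotheses of that theorem are already available: by $(f_3)$ the energy functional $\mathfrak{E}(\phi)=\widehat{\mathfrak{M}}\!\left(\Phi(\phi)\right)-\int_{\Lambda}G(\xi,\phi)\,d\xi$, with $\Phi(\phi)=\int_{\Lambda}\frac{1}{\kappa(\xi)}\left\vert{}^{\rm H}\mathfrak{D}^{\mu,\nu;\,\psi}_{0+}\phi\right\vert^{\kappa(\xi)}d\xi$, is even, and the growth bound \eqref{ineq1} makes $\mathfrak{E}\in C^{1}(\mathcal{H}^{\mu,\nu;\,\psi}_{\kappa(\xi)}(\Lambda),\mathbb{R})$. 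I would fix the symmetric subspace decomposition already used to run the Fountain Theorem in Theorem~\ref{Teorema3.3}, writing the space as the closure of $\bigoplus_{j}X_{j}$ with $\dim X_{j}<\infty$ and setting $Y_{k}=\bigoplus_{j=1}^{k}X_{j}$, $Z_{k}=\overline{\bigoplus_{j\ge k}X_{j}}$. Everything then reduces to verifying the four structural conditions $(B_1)$--$(B_4)$ for all $k$ large.

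For the geometry I would reuse the embedding quantity $\beta_{k}=\sup\{\|u\|_{\tilde{\zeta}(\xi)}:u\in Z_{k},\ \|u\|=1\}$, which satisfies $\beta_{k}\to0$ as $k\to\infty$. Bounding $\widehat{\mathfrak{M}}$ below through $(C_0)$ by $m_{0}\Phi(\cdot)$ and $\int_{\Lambda}G$ above through the subcritical growth \eqref{ineq1}, the modular inequalities of the variable exponent space give on $Z_{k}$ an estimate of the shape $\mathfrak{E}(u)\ge A\|u\|^{\kappa^{-}}-B\beta_{k}^{\tilde{\zeta}^{+}}\|u\|^{\tilde{\zeta}^{+}}$; choosing $\rho_{k}$ as the balancing radius of these two powers yields $(B_1)$, and the same computation bounds the infimum $d_{k}$ of $(B_3)$ by a vanishing power of $\beta_{k}$, so $d_{k}\to0$. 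Condition $(B_2)$ is checked on the finite dimensional $Y_{k}$, where all norms are equivalent: $(f_4)$ supplies the pointwise minorant $G(\xi,t)\ge\frac{c}{\gamma^{+}}|t|^{\gamma(\xi)}$, and it is exactly the admissible range $\kappa^{+}<\gamma^{-}\le\gamma^{+}<\kappa^{-}/(1-\omega)$---together with the control of the Kirchhoff primitive afforded by $(C_1)$---that makes the nonlinear power compete against the Kirchhoff power on a sphere $\|u\|=r_{k}$ with $0<r_{k}<\rho_{k}$, giving $\max_{\|u\|=r_k}\mathfrak{E}<0$.

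The main obstacle is the graded Palais--Smale condition $(PS)^{*}_{c}$ for $c\in[d_{k_0},0)$, required in $(B_4)$: from $u_{n}\in Y_{n}$ with $\mathfrak{E}(u_{n})\to c$ and $(\mathfrak{E}|_{Y_{n}})'(u_{n})\to0$ I must extract a subsequence converging strongly to a genuine critical point of $\mathfrak{E}$. Boundedness of $(u_{n})$ follows by feeding the Ambrosetti--Rabinowitz inequality \eqref{eq3.8} into the energy identity and using $(C_1)$, the hypothesis $\theta>\kappa^{+}/(1-\omega)$ being exactly what lets the superlinear term dominate the nonlocal Kirchhoff factor. The delicate point is upgrading $u_{n}\rightharpoonup u$ to strong convergence: since ${\bf L}^{\mu,\nu;\,\psi}_{\kappa(\xi)}$ is simultaneously nonlocal through $\mathfrak{M}$ and inhomogeneous through $\kappa(\xi)$, I would first use $(C_0)$ to keep $\mathfrak{M}(\Phi(u_{n}))$ bounded away from $0$ and pass to a convergent subsequence of these coefficients, then combine the compactness of the embedding $\mathcal{H}^{\mu,\nu;\,\psi}_{\kappa(\xi)}(\Lambda)\hookrightarrow L^{\tilde{\zeta}(\xi)}(\Lambda)$ with the $(S_{+})$ property of the fractional $\psi$-Hilfer $\kappa(\xi)$-gradient to conclude $u_{n}\to u$; extra care is needed because $(PS)^{*}_{c}$ lives on the finite dimensional truncations $Y_{n}$ rather than on the whole space. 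With $(B_1)$--$(B_4)$ in place, the Dual Fountain Theorem furnishes critical points $\pm v_{k}$ with $d_{k}\le\mathfrak{E}(\pm v_{k})<0$ and $\mathfrak{E}(\pm v_{k})\to0^{-}$, which is the assertion.
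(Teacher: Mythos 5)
Your proposal is correct and takes essentially the same route as the paper's proof: the Dual Fountain Theorem (Lemma \ref{dual}) with the same decomposition $Y_k$, $Z_k$, the same $(B_1)$--$(B_3)$ geometry driven by $\beta_k\to 0$, the bound $\widehat{\mathfrak{M}}(t)\le c\,t^{1/(1-\omega)}$ from $(C_1)$ against the $(f_4)$ minorant on the finite-dimensional $Y_k$, and the $(PS)_c^*$ condition, which the paper simply imports from Lemma \ref{lm3} rather than reproving as you sketch. Two bookkeeping points to align with the paper: in $(B_1)$ the bound on $\int_\Lambda G$ should come from the combined $(f_0)$--$(f_2)$ estimate $G(\xi,t)\le\epsilon|t|^{\kappa^{+}}+c|t|^{\zeta(\xi)}$ (the paper's inequality \eqref{ineq4}), since the growth bound \eqref{ineq1} alone leaves a linear term $c|t|$ that your stated estimate omits and that would threaten nonnegativity on small spheres unless $\rho_k$ is chosen more delicately; and on the small spheres $\|u\|\le 1$ of the dual geometry the modular inequality gives the lower power $\|u\|^{\kappa^{+}}$ rather than $\|u\|^{\kappa^{-}}$ (harmless here, since $\zeta^{-}>\kappa^{+}$).
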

The plan of the paper is as follows. In section 2, we present some definitions on fractional derivatives and integrals, among others, and results on Sobolev spaces with variable exponents and $\psi$-fractional space. In section 3, we dedicate ourselves to deal with the main contributions of the article, as highlighted above, i.e., {\bf Theorem \ref{Teorema3.1}}, {\bf Theorem \ref{Teorema3.2}}, {\bf Theorem \ref{Teorema3.3}} and {\bf Theorem \ref{Teorema3.4}}.
 
\section{Previous results}
 In this section, we present a few essential definitions, lemmas and propositions to attack the main results of the article.

Let
$$
C_{+}(\bar{\Lambda})=\{ h~:~h\in C(\bar{\Lambda}),~h(\xi)>1 ~\mbox{for any}~ \xi\in\bar{\Lambda}\},
$$
and consider 
$$
h^{+}=\underset{\bar{\Lambda}}{\max}\,\,h(\xi),~h^{-}=\underset{\bar{\Lambda}}\,\,{\min}\,h(\xi) ~\mbox{for any }~ h\in C(\bar{\Lambda})
$$
{\color{red}and 
$$\mathscr{L}^{\kappa(\xi)}(\Lambda)=\left\{\phi\in S(\Lambda):\int_{\Lambda}\left\vert \phi(\xi)\right\vert^{\kappa(\xi)}d \xi<+\infty \right\}
$$ 
with the norm 
$$
||\phi||_{\mathscr{L}^{\kappa(\xi)}(\Lambda)}=||\phi||_{\kappa(\xi)}=\inf \left\{\lambda >0~:~\int_{\Lambda}\left|\frac{\phi(\xi)}{\lambda}\right|^{\kappa(\xi)}d \xi\leq 1 \right\},
$$
where $S(\Lambda)$ is the set of all measurable real function defined on $\Lambda$. Note that, for $\kappa(\xi)=\kappa$, we have the space $\mathscr{L}^{\kappa}$.}

The $\psi$-fractional space is given by \cite{Sousa1,Sousa80}
$$
\mathcal{H}^{\mu,\nu;\,\psi}_{\kappa(\xi)}(\Lambda)=\left\{\phi\in \mathscr{L}^{\kappa(\xi)}(\Lambda)~:~\left\vert^{\rm H}\mathfrak{D}^{\mu,\nu;\,\psi}_{0+}\phi\right\vert\in \mathscr{L}^{\kappa(\xi)}(\Lambda)\right\}
$$ 
with the norm 
$$
||\phi||=||\phi||_{\mathcal{H}^{\mu,\nu;\,\psi}_{\kappa(\xi)}(\Lambda)}=||\phi||_{\mathscr{L}^{\kappa(\xi)}(\Lambda)}+\left\|^{\rm H}\mathfrak{D}^{\mu,\nu;\,\psi}_{0+}\phi\right\|_{\mathscr{L}^{\kappa(\xi)}(\Lambda)}.
$$ 
{\color{red}Denote by $\mathcal{H}^{\mu,\nu;\,\psi}_{\kappa(\xi),0}(\Lambda)$ the closure of $C_{0}^{\infty}(\Lambda)$ in $\mathcal{H}^{\mu,\nu;\,\psi}_{\kappa(\xi)}(\Lambda).$}

Next, we will present the definitions of Riemann-Liouville partial fractional integrals with respect to another function and of the fractional derivatives $\psi$-Hilfer for $3$-variables. For a study of $N$-variables, see \cite{Srivastava,J1}.

Let $\theta=(\theta_{1},\theta_{2},\theta_{3})$, $T=(T_{1},T_{2},T_{3})$ and $\mu=(\mu_{1},\mu_{2},\mu_{3})$ where $0<\mu_{1},\mu_{2},\mu_{3}<1$ with $\theta_{j}<T_{j}$, for all $j\in \left\{1,2,3 \right\}$. {\color{red}Also put $\Lambda=I_{1}\times I_{2}\times I_{3}=[\theta_{1},T_{1}]\times [\theta_{2},T_{2}]\times [\theta_{3},T_{3}]$, where $T_{1},T_{2},T_{3}$ and $\theta_{1},\theta_{2},\theta_{3}$ are positive constants}. Consider also $\psi(\cdot)$ be an increasing and positive monotone function on $(\theta_{1},T_{1}),(\theta_{2},T_{2}),(\theta_{3},T_{3})$, having a continuous derivative $\psi'(\cdot)$ on $(\theta_{1},T_{1}],(\theta_{2},T_{2}],(\theta_{3},T_{3}]$. The $\psi$-Riemann-Liouville fractional partial integrals of $\phi\in \mathscr{L}^{1}(\Lambda)$ of order $\mu$ $(0<\mu<1)$ are given by \cite{Srivastava,J1}:
\begin{itemize}
    \item 1-variable: right and left-sided
\begin{equation*}
    {\bf I}^{\mu,\psi}_{\theta_{1}} \phi(\xi_{1})=\dfrac{1}{\Gamma(\mu)} \int_{\theta_{1}}^{\xi_{1}} \psi'(s_{1})(\psi(\xi_{1})- \psi(s_{1}))^{\mu-1} \phi(s_{1}) ds_{1},\,\,to\,\,\theta_{1}<s_{1}<\xi_{1}
\end{equation*}
and
\begin{equation*}
    {\bf I}^{\mu,\psi}_{T_{1}} \phi(\xi_{1})=\dfrac{1}{\Gamma(\mu)} \int_{\xi_{1}}^{T_{1}} \psi'(s_{1})(\psi(s_{1})- \psi(\xi_{1}))^{\mu-1} \phi(s_{1}) ds_{1},\,\,to\,\,\xi_{1}<s_{1}<T_{1},
\end{equation*}
with $\xi_{1}\in[\theta_{1},T_{1}]$, respectively.
    
\item 3-variables: right and left-sided
\begin{eqnarray*}
    {\bf I}^{\mu,\psi}_{\theta} \phi(\xi_{1},\xi_{2},\xi_{3})=\dfrac{1}{\Gamma(\mu)\Gamma(\mu_{2})\Gamma(\mu_{3})} \int_{\theta_{1}}^{\xi_{1}}
    \int_{\theta_{2}}^{\xi_{2}}
    \int_{\theta_{3}}^{\xi_{3}}
    \psi'(s_{1})\psi'(s_{2})\psi'(s_{3})
    (\psi(\xi_{1})- \psi(s_{1}))^{\mu_1-1}\notag\\
    \times
    (\psi(\xi_{2})- \psi(s_{2}))^{\mu_{2}-1}
    (\psi(\xi_{3})- \psi(s_{3}))^{\mu_{3}-1}
    \phi(s_{1},s_{2},s_{3}) ds_{3}ds_{2}ds_{1},
\end{eqnarray*}
to $\theta_{1}<s_{1}<\xi_{1}, \theta_{2}<s_{2}<\xi_{2}, \theta_{3}<s_{3}<\xi_{3}$ and
\begin{eqnarray*}
    {\bf I}^{\mu,\psi}_{T} \phi(\xi_{1},\xi_{2},\xi_{3})=\dfrac{1}{\Gamma(\mu)\Gamma(\mu_{2})\Gamma(\mu_{3})} \int_{\xi_{1}}^{T_{1}}
    \int_{\xi_{2}}^{T_{2}}
    \int_{\xi_{3}}^{T_{3}}
    \psi'(s_{1})\psi'(s_{2})\psi'(s_{3})
    (\psi(s_{1})-\psi(\xi_{1}))^{\mu_1-1}\notag\\
    \times
    (\psi(s_{2})-\psi(\xi_{2}))^{\mu_{2}-1}
    (\psi(s_{3})-\psi(\xi_{3}))^{\mu_{3}-1}
    \phi(s_{1},s_{2},s_{3}) ds_{3}ds_{2}ds_{1},
\end{eqnarray*}
with $\xi_{1}<s_{1}<T_{1}, \xi_{2}<s_{2}<T_{2}, \xi_{3}<s_{3}<T_{3}$, $\xi_{1}\in[\theta_{1},T_{1}]$, $\xi_{2}\in[\theta_{2},T_{2}]$ and $\xi_{3}\in[\theta_{3},T_{3}]$, respectively.
\end{itemize}

On the other hand, let $\phi,\psi \in C^{n}(\Lambda)$ {\color{blue} be two} functions such that $\psi$ is increasing and $\psi'(\xi_{j})\neq 0$ with $\xi_{j}\in[\theta_{j},T_{j}]$, $j\in \left\{1,2,3 \right\}$. The left and
right-sided $\psi$-Hilfer fractional partial derivative of $3$-variables of $\phi\in AC^{n}(\Lambda)$ of order $\mu=(\mu_{1},\mu_{2},\mu_{3})$ $(0<\mu_{1},\mu_{2},\mu_{3}\leq 1)$ and type $\nu=(\nu_{1},\nu_{2},\nu_{3})$ where $0\leq\nu_{1},\nu_{2},\nu_{3}\leq 1$, are defined by \cite{Srivastava,J1}
\begin{equation}\label{derivada1}
{^{\mathbf H}\mathfrak{D}}^{\mu,\nu;\psi}_{\theta}\phi(\xi_{1},\xi_{2},\xi_{3})= {\bf I}^{\nu(1-\mu),\psi}_{\theta} \Bigg(\frac{1}{\psi'(\xi_{1})\psi'(\xi_{2})\psi'(\xi_{3})} \Bigg(\frac{\partial^{3}} {\partial \xi_{1}\partial \xi_{2}\partial \xi_{3}}\Bigg) \Bigg) {\bf I}^{(1-\nu)(1-\mu),\psi}_{\theta} \phi(\xi_{1},\xi_{2},\xi_{3})
\end{equation}
and
\begin{equation}\label{derivada2}
{^{\mathbf H}\mathfrak{D}}^{\mu,\nu;\psi}_{T}\phi(\xi_{1},\xi_{2},\xi_{3})= {\bf I}^{\nu(1-\mu),\psi}_{T} \Bigg(-\frac{1}{\psi'(\xi_{1})\psi'(\xi_{2})\psi'(\xi_{3})} \Bigg(\frac{\partial^{3}} {\partial \xi_{1}\partial \xi_{2}\partial \xi_{3}}\Bigg) \Bigg) {\bf I}^{(1-\nu)(1-\mu),\psi}_{T} \phi(\xi_{1},\xi_{2},\xi_{3}),
\end{equation}
where $\theta$ and $T$ are the same parameters presented in the definition of fractional integrals ${\bf I}_{T}^{\mu;\psi}(\cdot)$ and ${\bf I}_ {\theta}^{\mu;\psi}(\cdot)$.

Taking $\theta =0$ in the definition of ${^{\mathbf H}\mathfrak{D}}^{\mu,\nu;\psi}_{\theta}(\cdot)$, we have ${^{\mathbf H}\mathfrak{D}}^{\mu,\nu;\psi}_{0}(\cdot)$. During the paper we will use the following notation: ${^{\mathbf H}\mathfrak{D}}^{\mu,\nu;\psi}_{\theta} \phi(\xi_{1},\xi_{2},\xi_{3}):= {^{\mathbf H}\mathfrak{D}}^{\mu,\nu;\psi}_{\theta} \phi$, ${^{\mathbf H}\mathfrak{D}}^{\mu,\nu;\psi}_{T} \phi(\xi_{1},\xi_{2},\xi_{3}):= {^{\mathbf H}\mathfrak{D}}^{\mu,\nu;\psi}_{T} \phi$ and ${\bf I}_{\theta}^{\mu;\psi}\phi(\xi_{1},\xi_{2},\xi_{3}):= {\bf I}_{\theta}^{\mu;\psi}\phi$.

{\color{red}Let $\theta=(\theta_{1},\theta_{2})$, $T=(T_{1},T_{2})$ and $\mu=(\mu_{1},\mu_{2})$. The relation
\begin{equation}\label{eq.218}
\int_{\theta_{1}}^{T_1}\int_{\theta_2}^{T_2}\left( {\bf I}_{\theta}^{\mu ;\psi }\varphi\left( \xi_{1},\xi_{2}\right) \right) \phi\left( \xi_{1},\xi_{2}\right) {\rm d\xi_{2} d\xi_{1}}=\int_{\theta_{1}}^{T_1}\int_{\theta_2}^{T_2}\varphi\left( \xi_{1},\xi_{2}\right) \psi ^{\prime }\left( \xi_{1}\right) \psi'(\xi_{2}) {\bf I}_{T}^{\mu ;\psi }\left( \frac{\phi\left( \xi_{1},\xi_{2}\right) }{\psi ^{\prime }\left( \xi_{1}\right)\psi'(\xi_{2}) }\right) {\rm d\xi_{2} d\xi_{1}}
\end{equation}%
is valid.}

{\color{red}One can prove Eq.(\ref{eq.218}) directly by interchanging the order of integration by the Dirichlet formula in the particular case Fubini theorem, i.e.,
\begin{eqnarray*}
&&\int_{\theta_{1}}^{T_1}\int_{\theta_2}^{T_2}\left( {\bf I}_{\theta}^{\mu ;\psi }\varphi\left( \xi_{1},\xi_{2}\right) \right) \phi\left( \xi_{1},\xi_{2}\right) {\rm d\xi_{2} d\xi_{1}}\notag\\ &=&\int_{\theta_{1}}^{T_1}\int_{\theta_2}^{T_2}\frac{1}{\Gamma \left( \mu_{1} \right) \Gamma \left( \mu_{2} \right)}
\int_{\theta_{1}}^{\xi_{1}}\int_{\theta_{2}}^{\xi_{2}}\psi ^{\prime }\left( s_{1}\right) \psi ^{\prime }\left( s_{2}\right) \left( \psi \left( \xi_{1}\right) -\psi \left( s_{1}\right) \right) ^{\mu_{1} -1} 
\left( \psi \left( \xi_{2}\right) -\psi \left( s_{2}\right) \right) ^{\mu_{2} -1}\notag\\&& \times\varphi\left( s_{1},s_{2}\right) {\rm ds_{2} ds_{1}}\phi\left( \xi_{1},\xi_{2}\right) {\rm d\xi_{2} d\xi_{1}}
\notag \\
&=&\int_{\theta_{1}}^{T_1}\int_{\theta_2}^{T_2}\frac{1}{\Gamma \left( \mu_{1} \right) \Gamma \left( \mu_{2} \right)}
\int_{\xi_{1}}^{T_{1}}\int_{\xi_{2}}^{T_{2}}\psi ^{\prime }\left( s_{1}\right) \psi ^{\prime }\left( s_{2}\right) \left( \psi \left( \xi_{1}\right) -\psi \left( s_{1}\right) \right) ^{\mu_{1} -1} 
\left( \psi \left( \xi_{2}\right) -\psi \left( s_{2}\right) \right) ^{\mu_{2} -1}\notag\\&& \times\phi\left( \xi_{1},\xi_{2}\right) {\rm d\xi_{2} d\xi_{1}}\varphi\left( s_{1},s_{2}\right) {\rm ds_{2} ds_{1}}
\notag \\
&=&\int_{\theta_{1}}^{T_1}\int_{\theta_2}^{T_2}\psi'(s_{1})\psi'(s_{2}) \varphi(s_{1},s_{2}){\bf I}_{T}^{\mu ;\psi }\left( \frac{\phi\left( s_{1},s_{2}\right) }{\psi ^{\prime }\left(s_{1}\right) \psi'(s_{2}) }\right) {\rm ds_{2} ds_{1}}.
\end{eqnarray*}}

{\color{red}
\begin{theorem} Let  $\psi(\cdot)$ be an increasing and positive monotone function on $[\theta_{1},T_1]\times [\theta_{2},T_2]$, having a continuous derivative $\psi'(\cdot)\neq 0$ on $(\theta_1,T_1)\times(\theta_2,T_2)$. If $0<\mu=(\mu_{1},\mu_{2}) <1$ and $0\leq \nu=(\nu_{1},\nu_{2}) \leq 1$, then
\begin{eqnarray}\label{mera}
&&\int_{\theta_{1}}^{T_1}\int_{\theta_2}^{T_2}\left( ^{{\bf H}}\mathfrak{D}_{\theta}^{\mu,\nu ;\psi }\varphi\left( \xi_{1},\xi_{2}\right) \right) \phi\left( \xi_{1},\xi_{2}\right) {\rm d\xi_{2} d\xi_{1}}\notag\\&&=\int_{\theta_{1}}^{T_1}\int_{\theta_2}^{T_2}\varphi\left( \xi_{1},\xi_{2}\right) \psi ^{\prime }\left(\xi_{1}\right) \psi ^{\prime }\left(\xi_{2}\right) \text{ }^{{\bf H}}\mathfrak{D}_{T}^{\mu ,\nu ;\psi }\left( \frac{\phi\left( \xi_{1},\xi_{2}\right) }{\psi ^{\prime }\left( \xi_{1}\right)\psi ^{\prime }\left( \xi_{2}\right) }\right) {\rm d\xi_{2}d\xi_{1}}
\end{eqnarray}
for any $\varphi\in C^{1}$ and $\phi\in C^{1}$ satisfying the boundary conditions $\varphi\left( \theta_1,\theta_2\right)=0=\varphi\left( T_1,T_2\right)$.
\end{theorem}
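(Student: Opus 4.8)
The plan is to unfold the left-sided operator on the left of \eqref{mera} into the three-fold composition ``outer $\psi$-integral $\circ$ weighted mixed derivative $\circ$ inner $\psi$-integral'' prescribed by \eqref{derivada1}, and then to transport each of the three factors across the pairing $\int_{\theta_1}^{T_1}\int_{\theta_2}^{T_2}(\,\cdot\,)(\,\cdot\,)\,\mathrm{d}\xi_2\,\mathrm{d}\xi_1$ one factor at a time: for the two fractional-integral factors I would use the reciprocity identity \eqref{eq.218} already established above, and for the differential factor I would use ordinary integration by parts in each variable. Concretely, in the two-variable setting one writes
\[
{}^{\mathbf H}\mathfrak{D}_{\theta}^{\mu,\nu;\psi}\varphi
={\bf I}^{\nu(1-\mu),\psi}_{\theta}\!\left(\frac{1}{\psi'(\xi_1)\psi'(\xi_2)}\,\frac{\partial^{2}}{\partial\xi_1\partial\xi_2}\,{\bf I}^{(1-\nu)(1-\mu),\psi}_{\theta}\varphi\right),
\]
and substitutes this into the left member of \eqref{mera}.

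First I would invoke \eqref{eq.218}, with the order there taken to be $\nu(1-\mu)$, to peel off the \emph{outer} integral ${\bf I}^{\nu(1-\mu),\psi}_{\theta}$. This converts it into the right-sided integral ${\bf I}^{\nu(1-\mu),\psi}_{T}$ acting on $\phi/(\psi'(\xi_1)\psi'(\xi_2))$ and releases a weight $\psi'(\xi_1)\psi'(\xi_2)$ which exactly cancels the factor $1/(\psi'(\xi_1)\psi'(\xi_2))$ standing in front of the mixed derivative. The pairing then reads $\int\!\!\int\big(\partial_{\xi_1}\partial_{\xi_2}\,{\bf I}^{(1-\nu)(1-\mu),\psi}_{\theta}\varphi\big)\,{\bf I}^{\nu(1-\mu),\psi}_{T}\!\big(\phi/(\psi'\psi')\big)\,\mathrm{d}\xi_2\,\mathrm{d}\xi_1$.

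Next I would integrate by parts once in $\xi_1$ and once in $\xi_2$ to transfer the mixed partial $\partial_{\xi_1}\partial_{\xi_2}$ from the $\varphi$-factor onto the right-integral factor; the two integrations by parts, being of even total order, introduce no net sign. A final application of \eqref{eq.218} with order $(1-\nu)(1-\mu)$ then strips the remaining inner integral ${\bf I}^{(1-\nu)(1-\mu),\psi}_{\theta}$, restores $\varphi$ together with the weight $\psi'(\xi_1)\psi'(\xi_2)$, and leaves on the $\phi$-side a composition of the form ${\bf I}^{(1-\nu)(1-\mu),\psi}_{T}\big(\tfrac{1}{\psi'\psi'}\partial_{\xi_1}\partial_{\xi_2}{\bf I}^{\nu(1-\mu),\psi}_{T}(\phi/(\psi'\psi'))\big)$. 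To identify this with ${}^{\mathbf H}\mathfrak{D}_{T}^{\mu,\nu;\psi}(\phi/(\psi'\psi'))$ in the nesting written in \eqref{derivada2}, one must still interchange the two right-sided $\psi$-integrals across the mixed derivative; this is immediate when $\nu=\tfrac12$ and in general is a point that has to be justified through the commutation (semigroup) properties of the Riemann--Liouville $\psi$-integrals. Once that is done, the resulting expression is the right-hand side of \eqref{mera}.

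The step I expect to be the main obstacle is the vanishing of the boundary terms produced by the two integrations by parts. Each integration by parts generates contributions supported on an edge of the rectangle $[\theta_1,T_1]\times[\theta_2,T_2]$, whereas the hypothesis supplies only the two corner conditions $\varphi(\theta_1,\theta_2)=0=\varphi(T_1,T_2)$. The delicate part is therefore to show that the edge contributions, where exactly one of $\xi_1,\xi_2$ is frozen at an endpoint, also drop out. I would handle this by exploiting that the left fractional integral ${\bf I}^{(1-\nu)(1-\mu),\psi}_{\theta}\varphi$ tends to $0$ as $\xi_j\to\theta_j^{+}$ while the companion right integral ${\bf I}^{\nu(1-\mu),\psi}_{T}(\phi/(\psi'\psi'))$ tends to $0$ as $\xi_j\to T_j^{-}$, so that each surviving boundary factor is paired with a quantity vanishing at the relevant endpoint; the $C^{1}$-regularity of $\varphi$ and $\phi$ guarantees that these limits are attained and that the boundary integrals are well defined. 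Verifying this endpoint behaviour of the fractional integrals, rather than the formal algebra of transferring the operators, is the technical heart of the argument.
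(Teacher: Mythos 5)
Your route is correct in outline but genuinely different from the paper's. The paper works from the \emph{right}-hand side of \eqref{mera}: it rewrites the right-sided operator as $^{{\bf H}}\mathfrak{D}_{T}^{\mu,\nu;\psi}={\bf I}_{T}^{\gamma-\mu;\psi}D_{T}^{\gamma;\psi}$ with $\gamma=\mu+\nu(1-\mu)$, replaces $\varphi$ by its fractional reconstruction $\varphi={\bf I}_{\theta}^{\mu;\psi}\,^{{\bf H}}\mathfrak{D}_{\theta}^{\mu,\nu;\psi}\varphi+(\text{power of }\psi(\xi_j)-\psi(\theta_j))\cdot d_j$, kills the initial-value term $d_j$ with the corner condition $\varphi(\theta_1,\theta_2)=0$, and then applies \eqref{eq.218} \emph{once} together with the semigroup composition of right-sided integrals to collapse everything to the left-hand side. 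You instead unfold the \emph{left}-sided derivative into its defining three-factor composition \eqref{derivada1} and transport the factors one at a time: \eqref{eq.218} twice (orders $\nu(1-\mu)$ and $(1-\nu)(1-\mu)$) plus two classical integrations by parts, finishing with the commutation of the two right-sided integrals across the mixed derivative. Both ingredients you flag as delicate do check out: the edge terms vanish because a left-sided $\psi$-integral of positive order vanishes identically on the edges $\xi_j=\theta_j$ and a right-sided one on $\xi_j=T_j$, and the final interchange holds for $C^1$ data since (already in one variable) both orderings ${\bf I}_{T}^{a}\bigl(-\tfrac{d}{dx}\bigr){\bf I}_{T}^{b}g$ and ${\bf I}_{T}^{b}\bigl(-\tfrac{d}{dx}\bigr){\bf I}_{T}^{a}g$ equal $\frac{(\psi(T)-\psi(x))^{a+b-1}}{\Gamma(a+b)}g(T)+{\bf I}_{T}^{a+b}(-g')$ up to the usual $\psi$-weights, which is symmetric in $(a,b)$. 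What your approach buys is transparency and economy of tools (only \eqref{eq.218}, classical integration by parts, and endpoint vanishing; no fractional fundamental theorem, no Riemann--Liouville derivative $D_{T}^{\gamma;\psi}$); what the paper's buys is brevity. One caveat you should add: your vanishing mechanism needs \emph{both} orders $\nu(1-\mu)$ and $(1-\nu)(1-\mu)$ to be strictly positive, so it covers $0<\nu<1$ but degenerates at the admissible endpoints $\nu=0$ and $\nu=1$, where one of the integrals is the identity and the edge terms survive; there the hypotheses $\varphi(\theta_1,\theta_2)=0=\varphi(T_1,T_2)$ (which your argument otherwise never invokes, unlike the paper's, which uses the corner condition to annihilate $d_j$) must be brought in, or an explicit cancellation computed.
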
}

{\color{red}
\begin{proof} In fact, using the Eq.(\ref{eq.218}), one has
\begin{eqnarray*}
&&\int_{\theta_{1}}^{T_1}\int_{\theta_2}^{T_2}\varphi\left( \xi_{1},\xi_{2}\right) \psi ^{\prime }\left( \xi_{1}\right)\psi ^{\prime }\left( \xi_{2}\right) \text{ }^{{\bf H}}\mathfrak{D}_{T}^{\mu ,\nu ;\psi }\left( \frac{\phi\left( \xi_{1},\xi_{2}\right) }{\psi^{\prime }\left( \xi_{1}\right) \psi'(\xi_{2})}\right) {\rm d\xi_{2}d\xi_{1}}  \notag \\
&=&\int_{\theta_{1}}^{T_1}\int_{\theta_2}^{T_2}\varphi\left( \xi_{1},\xi_{2}\right) \psi ^{\prime }\left( \xi_{1}\right)\psi'(\xi_{2})
{\bf I}_{T}^{\gamma-\mu ;\psi }\text{ }D_{T}^{\gamma;\psi }\left( \frac{\phi\left(
\xi_{1},\xi_{2}\right) }{\psi ^{\prime }\left(\xi_{1}\right)\psi'(\xi_{2}) }\right) {\rm d\xi_{2}d\xi_{1}}  \notag \\
&=&\int_{\theta_{1}}^{T_1}\int_{\theta_2}^{T_2}\psi ^{\prime }\left( \xi_{1}\right)\psi'(\xi_{2}) \left[ {\bf I}_{\theta}^{\mu ;\psi }  \text{ }^{{\bf H}}\mathfrak{D}_{\theta}^{\mu ,\nu ;\psi }\varphi\left( \xi_{1},\xi_{2}\right) +\frac{\left( \psi \left( \xi_{1}\right) -\psi \left( \theta_1\right) \right) ^{\gamma -1}\left( \psi \left( \xi_{2}\right) -\psi \left( \theta_2\right) \right) ^{\gamma -1}
}{\Gamma \left(\gamma \right) }d_{j}\right]\notag\\&&\times {\bf I}_{T}^{\gamma-\mu ;\psi }\text{ }D_{T}^{\gamma;\psi
}\left( \frac{\phi\left( \xi_{1},\xi_{2}\right) }{\psi ^{\prime }\left( \xi_{1}\right)\psi'(\xi_{2}) }\right) {\rm d\xi_{1} d\xi_{2}}
\text{ }\notag\\&& \left( \text{where }d_{j}=\left( \frac{1}{\psi ^{\prime }\left(\xi_{1}\right) \psi'(\xi_{2}) }\frac{d}{{\rm d\xi_{1}}}\frac{d}{{\rm d\xi_{2}}}\right) {\bf I}_{\theta}^{\left( 1-\nu \right) \left( 1-\mu
\right) ;\psi }\varphi\left( \theta_1,\theta_2\right) \right)  \notag \\
&=&\int_{\theta_{1}}^{T_1}\int_{\theta_2}^{T_2}\psi ^{\prime }\left( \xi_{1}\right) \psi'(\xi_{2}) {\bf I}_{\theta}^{\mu ;\psi }\text{ }
^{{\bf H}}\mathfrak{D}_{\theta}^{\mu ,\nu ;\psi }\varphi\left( \xi_{1},\xi_{2}\right) {\bf I}_{T}^{\gamma-\mu ;\psi }
\text{ }D_{T}^{\gamma;\psi }\left( \frac{\phi\left( \xi_{1},\xi_{2}\right) }{\psi ^{\prime
}\left( \xi_{1}\right) \psi'(\xi_{2}) }\right) {\rm d\xi_{2}d\xi_{1}}\notag\\&+&\frac{d_{j}}{\Gamma \left( \gamma \right) }\int_{\theta_{1}}^{T_1}\int_{\theta_2}^{T_2}\psi ^{\prime }\left( \xi_{1}\right) \psi'(\xi_{2}) \left( \psi \left( \xi_{1}\right) -\psi \left( \theta_1\right) \right) ^{\gamma -1}\left( \psi \left( \xi_{2}\right) -\psi \left( \theta_2\right) \right) ^{\gamma -1}\notag\\&& \times{\bf I}_{b-}^{\gamma-\mu ;\psi }\text{ } D_{T}^{\gamma;\psi }\left( \frac{\phi\left( \xi_{1},\xi_{2}\right) }{\psi ^{\prime }\left(\xi_{1}\right)\psi'(\xi_{2}) }\right) {\rm d\xi_{2}d\xi_{1}}  \notag \\
&=&\int_{\theta_{1}}^{T_1}\int_{\theta_2}^{T_2}{\bf I}_{\theta}^{\mu ;\psi }\text{ }^{{\bf H}}\mathfrak{D}_{\theta}^{\mu ,\nu ;\psi }\varphi\left( \xi_{1},\xi_{2}\right) {\bf I}_{T}^{-\mu ;\psi }\left( \frac{\phi\left( \xi_{1},\xi_{2}\right) }{ \psi ^{\prime }\left( \xi_{1}\right)\psi'(\xi_{2}) }\right) {\rm d\xi_{2}d\xi_{1}}  \notag \\
&=&\int_{\theta_{1}}^{T_1}\int_{\theta_2}^{T_2}\text{ }\left( ^{{\bf H}}\mathfrak{D}_{\theta}^{\mu ,\nu ;\psi }\varphi\left( \xi_{1},\xi_{2}\right) \right) \phi\left( \xi_{1},\xi_{2}\right) {\rm d\xi_{2}d\xi_{1}},
\end{eqnarray*}
where $D_{T}^{\gamma;\psi }(\cdot)$ is the $\psi$-Riemann-Liouville fractional derivative with $\gamma=\mu+\nu(1-\mu)$.
\end{proof}}

{\color{red}
}
\begin{proposition}\label{bana} {\rm\cite{Srivastava}} The spaces $\mathscr{L}^{\kappa(\xi)}(\Lambda)$ and $\mathcal{H}^{\mu,\nu;\,\psi}_{\kappa(\xi)}(\Lambda)$ are separable and reflexive Banach spaces.
\end{proposition}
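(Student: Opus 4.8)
The plan is to treat the two spaces in turn, deriving every property of $\mathcal{H}^{\mu,\nu;\,\psi}_{\kappa(\xi)}(\Lambda)$ from the corresponding property of the underlying variable-exponent Lebesgue space $\mathscr{L}^{\kappa(\xi)}(\Lambda)$. First I would verify that $\mathscr{L}^{\kappa(\xi)}(\Lambda)$ is a Banach space: the Luxemburg functional $\|\cdot\|_{\kappa(\xi)}$ is a genuine norm, and completeness follows from the standard modular argument — given a Cauchy sequence, pass to a rapidly converging subsequence, extract an a.e.\ limit, and use the Fatou property of the modular $\rho(\phi)=\int_\Lambda |\phi|^{\kappa(\xi)}\,d\xi$ together with the norm–modular inequalities to upgrade a.e.\ convergence to norm convergence. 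Separability would follow by showing that simple functions with rational coefficients, supported on a countable generating algebra of measurable subsets of $\Lambda$, are dense; this is licit because $\kappa^{+}<\infty$ forces modular approximation (via truncation) to imply norm approximation.

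For reflexivity I would exploit the hypothesis $1<\kappa^{-}\le\kappa(\xi)\le\kappa^{+}<2$, so that the exponent is bounded away from both $1$ and $\infty$. In this range $\mathscr{L}^{\kappa(\xi)}(\Lambda)$ is uniformly convex: a Clarkson-type inequality for variable exponents yields uniform convexity of the modular, hence of the norm, and reflexivity then follows from the Milman--Pettis theorem. Equivalently, one identifies the dual of $\mathscr{L}^{\kappa(\xi)}(\Lambda)$ with $\mathscr{L}^{\kappa'(\xi)}(\Lambda)$, where $\tfrac{1}{\kappa(\xi)}+\tfrac{1}{\kappa'(\xi)}=1$, and checks that the canonical embedding into the bidual is surjective. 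This settles the proposition for $\mathscr{L}^{\kappa(\xi)}(\Lambda)$.

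To transfer these properties to the fractional space, I would introduce the isometry
\[
\mathcal{T}:\mathcal{H}^{\mu,\nu;\,\psi}_{\kappa(\xi)}(\Lambda)\longrightarrow \mathscr{L}^{\kappa(\xi)}(\Lambda)\times \mathscr{L}^{\kappa(\xi)}(\Lambda),\qquad \mathcal{T}\phi=\bigl(\phi,\ {}^{\rm H}\mathfrak{D}^{\mu,\nu;\,\psi}_{0+}\phi\bigr),
\]
where the product carries the norm $\|(u,w)\|=\|u\|_{\kappa(\xi)}+\|w\|_{\kappa(\xi)}$; by the very definition of the $\mathcal{H}$-norm, $\mathcal{T}$ is an isometric embedding. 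The product of two separable reflexive Banach spaces is again separable and reflexive, and both properties are inherited by closed subspaces. Thus it suffices to show that the range $\mathcal{T}\bigl(\mathcal{H}^{\mu,\nu;\,\psi}_{\kappa(\xi)}(\Lambda)\bigr)$ is closed in the product — equivalently, that $\mathcal{H}^{\mu,\nu;\,\psi}_{\kappa(\xi)}(\Lambda)$ is itself complete.

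The main obstacle is precisely this closedness, i.e.\ the fact that the operator $\phi\mapsto {}^{\rm H}\mathfrak{D}^{\mu,\nu;\,\psi}_{0+}\phi$ is closed on $\mathscr{L}^{\kappa(\xi)}(\Lambda)$. Suppose $\phi_n\to\phi$ and $g_n:={}^{\rm H}\mathfrak{D}^{\mu,\nu;\,\psi}_{0+}\phi_n\to g$ in $\mathscr{L}^{\kappa(\xi)}(\Lambda)$; I must verify that $g={}^{\rm H}\mathfrak{D}^{\mu,\nu;\,\psi}_{0+}\phi$ in the weak sense. The tool is the integration-by-parts identity \eqref{mera}: for every test function $\varphi$ vanishing on $\partial\Lambda$ it moves the $\psi$-Hilfer derivative off $\phi_n$ and onto $\varphi$, so that the pairing $\int_\Lambda g_n\,\varphi\,d\xi$ equals $\int_\Lambda \phi_n\,(\mathcal{A}\varphi)\,d\xi$ for the associated right-sided adjoint operator $\mathcal{A}$. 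Since $\kappa^{+}<\infty$, the variable-exponent H\"older inequality makes the duality pairing jointly continuous on $\mathscr{L}^{\kappa(\xi)}\times\mathscr{L}^{\kappa'(\xi)}$, so letting $n\to\infty$ yields $\int_\Lambda g\,\varphi\,d\xi=\int_\Lambda \phi\,(\mathcal{A}\varphi)\,d\xi$ for all such $\varphi$; this is exactly the assertion that $g$ is the weak $\psi$-Hilfer derivative of $\phi$. Hence $(\phi,g)\in\mathcal{T}(\mathcal{H}^{\mu,\nu;\,\psi}_{\kappa(\xi)}(\Lambda))$, the range is closed, and the fractional space inherits both separability and reflexivity. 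The delicate points to monitor are the validity of the variable-exponent Clarkson inequality under the stated bounds on $\kappa$ and the boundary-term bookkeeping in \eqref{mera}, which relies on the vanishing of the test functions on $\partial\Lambda$.
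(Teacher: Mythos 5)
The paper offers no internal argument for this proposition: it is quoted directly from \cite{Srivastava}, so there is no proof in the paper to compare against and your attempt must be judged on its own terms. Your architecture is the standard (and surely the intended) one: first establish that $\mathscr{L}^{\kappa(\xi)}(\Lambda)$ is a separable, reflexive Banach space by classical variable-exponent theory --- completeness via the modular/Fatou argument, separability from simple functions since $\kappa^{+}<2<\infty$, reflexivity from uniform convexity or from the duality $(\mathscr{L}^{\kappa(\cdot)})^{*}\cong\mathscr{L}^{\kappa'(\cdot)}$ since $1<\kappa^{-}\le\kappa^{+}<\infty$ --- and then transfer everything to $\mathcal{H}^{\mu,\nu;\,\psi}_{\kappa(\xi)}(\Lambda)$ through the isometry $\mathcal{T}\phi=\bigl(\phi,{}^{\rm H}\mathfrak{D}^{\mu,\nu;\,\psi}_{0+}\phi\bigr)$ into the product space, using that closed subspaces inherit separability and reflexivity. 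All of that part is correct.

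The gap is in the one step you yourself flag: closedness of the range of $\mathcal{T}$. You move the derivative onto the test function via \eqref{mera}, but \eqref{mera} is established in the paper only for $C^{1}$ functions, and the boundary conditions it demands, $\varphi(\theta_1,\theta_2)=0=\varphi(T_1,T_2)$, fall on the function carrying the \emph{left-sided} derivative --- which in your application is $\phi_n$, not the test function $\varphi$. A generic element of $\mathcal{H}^{\mu,\nu;\,\psi}_{\kappa(\xi)}(\Lambda)$ is neither $C^{1}$ nor subject to those boundary conditions, so the identity $\int_\Lambda g_n\varphi\,d\xi=\int_\Lambda\phi_n(\mathcal{A}\varphi)\,d\xi$ cannot be read off from \eqref{mera} as stated. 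What your argument actually requires is that the space be defined through a \emph{weak} (distributional) $\psi$-Hilfer derivative, i.e.\ that the pairing identity hold for every element of the space by the very definition of the derivative; with that convention, closedness is immediate from the H\"older inequality of Proposition \ref{p3}, exactly as you conclude. The paper never pins down this interpretation (a vagueness of the paper, not of your proof), but without it --- for instance if ${}^{\rm H}\mathfrak{D}^{\mu,\nu;\,\psi}_{0+}\phi$ is meant in the strong a.e.\ sense --- your closedness step would need a separate density theorem, which is not available at this stage. So the strategy is right, but the completeness of $\mathcal{H}^{\mu,\nu;\,\psi}_{\kappa(\xi)}(\Lambda)$ rests on a weak-derivative formulation that you must state explicitly; the appeal to \eqref{mera} as written, with the boundary conditions attributed to the test function, does not deliver it.
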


\begin{proposition}{\rm \cite{20}} Set $\rho(\phi)=\displaystyle\int_{\Lambda}|\phi(\xi)|^{\kappa(\xi)}d \xi$. For any $\phi\in \mathscr{L}^{\kappa(\xi)}(\Lambda)$. Then,
\begin{itemize}

\item [1)] For $\phi\neq 0,~|\phi|_{\kappa(\xi)}=\lambda$ if and only if $\rho\left(\frac{\phi}{\lambda}\right)=1,$

\item [2)] $|\phi|_{\kappa(\xi)}<1,~(=1;>1)$ if and only if $\rho(\phi)<1~(=1,>1),$

\item [3)] If $|\phi|_{\kappa(\xi)}>1,$ then $|\phi|^{\kappa^{-}}_{\kappa(\xi)}\leq \rho(\phi)\leq |\phi|^{\kappa^{+}}_{\kappa(\xi)},$

\item [4)] If $|\phi|_{\kappa(\xi)}<1,$ then $|\phi|^{\kappa^{+}}_{\kappa(\xi)}\leq \rho(\phi)\leq |\phi|^{\kappa^{-}}_{\kappa(\xi)},$

\item [5)] $\lim\limits_{k\rightarrow +\infty}|\phi_{k}|_{\kappa(\xi)}=0$ if and only if $\lim\limits_{k\rightarrow +\infty}\rho(\phi_{k})=0$

\item [6)] $\lim\limits_{k\rightarrow +\infty}|\phi_{k}|_{\kappa(\xi)}=+\infty$ if and only if $\lim\limits_{k\rightarrow +\infty}\rho(\phi_{k})=+\infty$.
\end{itemize}	
\end{proposition}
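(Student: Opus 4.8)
The plan is to deduce all six assertions from two elementary ingredients: the continuity and strict monotonicity of the scalar map $\lambda\mapsto\rho(\phi/\lambda)$ for a fixed $\phi\neq 0$, and the pointwise comparison of $\lambda^{\kappa(\xi)}$ with the constants $\lambda^{\kappa^{-}}$ and $\lambda^{\kappa^{+}}$. Throughout I write $g(\lambda):=\rho(\phi/\lambda)=\int_{\Lambda}(|\phi(\xi)|/\lambda)^{\kappa(\xi)}\,d\xi$ for $\lambda>0$.

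First I would fix $\phi\neq 0$ in $\mathscr{L}^{\kappa(\xi)}(\Lambda)$ and analyse $g$ on $(0,\infty)$. Since $\kappa^{+}<\infty$, the bound $g(\lambda)\le\lambda^{-\kappa^{+}}\rho(\phi)$ for $0<\lambda<1$ (and monotonicity for $\lambda\ge 1$) shows $g$ is finite everywhere. Because $\kappa^{-}>1$, the integrand decreases strictly in $\lambda$ on the positive-measure set $\{\phi\neq 0\}$, so $g$ is strictly decreasing; the dominated convergence theorem (with $\lambda$ ranging over a compact subinterval of $(0,\infty)$ and dominating function $\lambda_{0}^{-\kappa^{+}}|\phi|^{\kappa(\xi)}$) yields continuity, while monotone convergence gives $g(\lambda)\to+\infty$ as $\lambda\to 0^{+}$ and dominated convergence gives $g(\lambda)\to 0$ as $\lambda\to+\infty$. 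The intermediate value theorem then produces a unique $\lambda_{0}>0$ with $g(\lambda_{0})=1$, and the definition of the Luxemburg norm as an infimum forces $|\phi|_{\kappa(\xi)}=\lambda_{0}$, which is exactly item~1). Item~2) is then immediate: comparing $g(1)=\rho(\phi)$ with $g(\lambda_{0})=1$ through the strict monotonicity of $g$ shows that the sign of $|\phi|_{\kappa(\xi)}-1$ is opposite to that of $\rho(\phi)-1$.

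For items~3) and 4) I would use the elementary bounds $\lambda^{\kappa^{-}}\le\lambda^{\kappa(\xi)}\le\lambda^{\kappa^{+}}$ when $\lambda>1$ and the reversed inequalities when $0<\lambda<1$. Writing $|\phi(\xi)|^{\kappa(\xi)}=\lambda^{\kappa(\xi)}|\phi(\xi)/\lambda|^{\kappa(\xi)}$ and integrating yields
\begin{equation*}
\lambda^{\kappa^{-}}\rho(\phi/\lambda)\le\rho(\phi)\le\lambda^{\kappa^{+}}\rho(\phi/\lambda)\quad(\lambda>1),\qquad
\lambda^{\kappa^{+}}\rho(\phi/\lambda)\le\rho(\phi)\le\lambda^{\kappa^{-}}\rho(\phi/\lambda)\quad(0<\lambda<1).
\end{equation*}
Choosing $\lambda=|\phi|_{\kappa(\xi)}$---which is $>1$ in case~3) and $<1$ in case~4) by item~2)---and inserting $\rho(\phi/\lambda)=1$ from item~1) collapses these double inequalities to the stated estimates.

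Finally, items~5) and 6) follow from 3)--4) combined with 2). For 5), if $|\phi_{k}|_{\kappa(\xi)}\to 0$ the norms are eventually $<1$ and the right inequality in 4) gives $\rho(\phi_{k})\le|\phi_{k}|_{\kappa(\xi)}^{\kappa^{-}}\to 0$; conversely, if $\rho(\phi_{k})\to 0$ then by 2) the norms are eventually $<1$ and the left inequality in 4) gives $|\phi_{k}|_{\kappa(\xi)}^{\kappa^{+}}\le\rho(\phi_{k})$, whence $|\phi_{k}|_{\kappa(\xi)}\to 0$; item~6) is identical with 3) in place of 4). The main obstacle is the very first step, item~1): establishing that the infimum defining the norm is attained with value exactly~$1$ rests on the continuity of $g$, and since the exponent $\kappa(\xi)$ varies with $\xi$ one must produce an integrable dominating function valid uniformly for $\lambda$ in a neighbourhood, using $\phi\in\mathscr{L}^{\kappa(\xi)}(\Lambda)$ together with $1<\kappa^{-}\le\kappa^{+}<2$. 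Once item~1) is secured, the remaining parts are routine consequences of monotonicity and the power inequalities above.
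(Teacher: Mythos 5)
The paper offers no proof of this proposition: it is quoted directly from Fan and Zhao \cite{20}, so there is no internal argument to compare against. Your proof is the standard modular-versus-Luxemburg-norm argument (essentially the one in the cited reference) and it is correct in substance: the analysis of $g(\lambda)=\rho(\phi/\lambda)$ — finiteness, strict decrease, continuity by dominated convergence, limits $+\infty$ and $0$ at the endpoints — does force a unique $\lambda_0$ with $g(\lambda_0)=1$ and identifies it with the infimum defining the norm, and items 2)--6) then follow exactly as you say from monotonicity of $g$ and the pointwise bounds $\lambda^{\kappa^-}\le\lambda^{\kappa(\xi)}\le\lambda^{\kappa^+}$ for $\lambda>1$ (reversed for $0<\lambda<1$).

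Three small repairs are needed in the write-up. First, in your deduction of item 2) you assert that the sign of $|\phi|_{\kappa(\xi)}-1$ is \emph{opposite} to that of $\rho(\phi)-1$; this is backwards and contradicts the very statement of 2). Since $g$ is strictly decreasing and $g(\lambda_0)=1$, the inequality $\lambda_0>1$ gives $\rho(\phi)=g(1)>g(\lambda_0)=1$, and $\lambda_0<1$ gives $\rho(\phi)<1$: the two signs \emph{agree}, which is what is claimed. The mechanism you invoke is the right one; only the wording is wrong. Second, your dominating function for continuity on a compact interval $[\lambda_0,\lambda_1]\subset(0,\infty)$ should be $\max\left(\lambda_0^{-\kappa^-},\lambda_0^{-\kappa^+}\right)|\phi|^{\kappa(\xi)}$; the function $\lambda_0^{-\kappa^+}|\phi|^{\kappa(\xi)}$ you propose dominates only when $\lambda_0\le 1$. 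Third, items 4) and 5) should record the trivial case $\phi=0$ (resp.\ $\phi_k=0$), since item 1), which you feed into the power inequalities, applies only to $\phi\neq 0$; the inequalities hold vacuously as $0\le 0\le 0$ in that case. (Also, the strict decrease of $g$ needs only $\kappa(\xi)>0$, not $\kappa^->1$, though invoking the stronger hypothesis does no harm.)
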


\begin{proposition}{\color{red}{}\rm \cite{Fan12}\label{proposition2.4} If $\phi,\phi_{k}\in \mathscr{L}^{\kappa(\xi)}(\Omega)$, $k=1,2,...$ then the following statements are equivalent each other}
\begin{enumerate}
    \item {\color{red}$\lim_{k\rightarrow+\infty} |\phi_{k}-\phi|_{\kappa(\xi)}=0$; }
    
    \item $\lim_{k\rightarrow+\infty} \rho(\phi_{k}-\phi)=0$;
    
    \item $\phi_{k}\rightarrow\phi$ in measure in $\Omega$ and $\lim_{k\rightarrow+\infty}\rho(\phi_{k})=\rho(\phi)$.
\end{enumerate}
\end{proposition}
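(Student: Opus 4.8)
\emph{Proof proposal.} The plan is to prove the cycle $(1)\Leftrightarrow(2)$ and $(2)\Leftrightarrow(3)$, which yields the mutual equivalence of all three statements. The first equivalence is immediate from the modular--norm dictionary of the preceding proposition, so the real content is the comparison of the modular condition $(2)$ with the ``convergence in measure plus convergence of modulars'' condition $(3)$. I would set $\psi_k:=\phi_k-\phi$ throughout and rely on two elementary facts that I establish first: a Chebyshev-type bound for the modular, and the uniform Young-type inequality stating that for each $\varepsilon\in(0,1)$ there is a constant $C_\varepsilon\geq 1$, independent of $\xi$, with $|a|^{p}\leq(1+\varepsilon)|b|^{p}+C_\varepsilon|a-b|^{p}$ for all $a,b\in\mathbb{R}$ and all $p\in[\kappa^{-},\kappa^{+}]$ (the uniform constant exists because $[\kappa^{-},\kappa^{+}]\subset(1,\infty)$ is compact).

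For $(1)\Leftrightarrow(2)$ I would simply apply item $(5)$ of the preceding proposition to the sequence $\psi_k=\phi_k-\phi$, giving $\lim_k|\phi_k-\phi|_{\kappa(\xi)}=0$ if and only if $\lim_k\rho(\phi_k-\phi)=0$, with no further work. For $(2)\Rightarrow(3)$ I would argue in two steps. Convergence in measure follows from the Chebyshev bound: on the level set $E_k(\varepsilon)=\{\xi\in\Omega:|\phi_k(\xi)-\phi(\xi)|\geq\varepsilon\}$ one has $|\phi_k-\phi|^{\kappa(\xi)}\geq\min\{\varepsilon^{\kappa^{-}},\varepsilon^{\kappa^{+}}\}$, so that $\min\{\varepsilon^{\kappa^{-}},\varepsilon^{\kappa^{+}}\}\,|E_k(\varepsilon)|\leq\rho(\phi_k-\phi)\to0$, whence $|E_k(\varepsilon)|\to0$ for every $\varepsilon>0$. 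For $\rho(\phi_k)\to\rho(\phi)$ I would take $(a,b)=(\phi_k,\phi)$ in the Young-type inequality and integrate to get $\rho(\phi_k)\leq(1+\varepsilon)\rho(\phi)+C_\varepsilon\rho(\phi_k-\phi)$, and take $(a,b)=(\phi,\phi_k)$ to get the reverse bound $\rho(\phi)\leq(1+\varepsilon)\rho(\phi_k)+C_\varepsilon\rho(\phi_k-\phi)$; letting $k\to\infty$ and then $\varepsilon\to0$ forces $\rho(\phi_k)\to\rho(\phi)$.

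The main work is $(3)\Rightarrow(2)$, which I would prove by contradiction combined with a variable-exponent Brezis--Lieb identity. Suppose $\rho(\phi_k-\phi)\not\to0$; then along a subsequence $\rho(\phi_k-\phi)\geq\delta>0$, and since $\phi_k\to\phi$ in measure I extract a further subsequence converging a.e. On this subsequence the same Young-type inequality (applied to $a=\phi_k$, $b=\phi$) yields the pointwise bound $\big||\phi_k|^{\kappa(\xi)}-|\phi_k-\phi|^{\kappa(\xi)}-|\phi|^{\kappa(\xi)}\big|\leq\varepsilon|\phi_k-\phi|^{\kappa(\xi)}+(C_\varepsilon+1)|\phi|^{\kappa(\xi)}$, so the truncated excess $\big(\big||\phi_k|^{\kappa(\xi)}-|\phi_k-\phi|^{\kappa(\xi)}-|\phi|^{\kappa(\xi)}\big|-\varepsilon|\phi_k-\phi|^{\kappa(\xi)}\big)^{+}$ is dominated by $(C_\varepsilon+1)|\phi|^{\kappa(\xi)}\in\mathscr{L}^{1}(\Omega)$ and tends to $0$ a.e. Dominated convergence, together with the bound $\rho(\phi_k-\phi)\leq 2^{\kappa^{+}-1}\big(\rho(\phi_k)+\rho(\phi)\big)=O(1)$ (from $\rho(\phi_k)\to\rho(\phi)<\infty$), then gives $\rho(\phi_k)-\rho(\phi_k-\phi)\to\rho(\phi)$ as $\varepsilon\to0$; combined with $\rho(\phi_k)\to\rho(\phi)$ this forces $\rho(\phi_k-\phi)\to0$ along the subsequence, contradicting $\rho(\phi_k-\phi)\geq\delta$.

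The hardest point is precisely this last direction. Two features demand care: one must produce the \emph{uniform-in-$\xi$} Young-type inequality in order to localize the Brezis--Lieb cancellation across the variable exponent $\kappa(\xi)$, and one must cope with the fact that convergence in measure delivers a.e. convergence only along subsequences, which is exactly why the contradiction-and-subsequence scheme is used to propagate the conclusion back to the full sequence. Once $(2)\Leftrightarrow(3)$ is secured, it closes the loop with $(1)\Leftrightarrow(2)$ and establishes the mutual equivalence of $(1)$, $(2)$, $(3)$.
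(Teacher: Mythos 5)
The paper contains no proof of this proposition: it is quoted from Fan--Zhao \cite{Fan12} as a known fact about $\mathscr{L}^{\kappa(\xi)}$, so there is no internal argument to compare yours against. Judged on its own merits, your proposal is correct and follows what is essentially the standard argument in the variable-exponent literature: the equivalence $(1)\Leftrightarrow(2)$ is indeed immediate from item $(5)$ of the preceding norm--modular proposition applied to $\psi_k=\phi_k-\phi$; the Chebyshev bound $\min\{\varepsilon^{\kappa^{-}},\varepsilon^{\kappa^{+}}\}\,|E_k(\varepsilon)|\leq\rho(\phi_k-\phi)$ correctly exploits $1<\kappa^{-}\leq\kappa(\xi)\leq\kappa^{+}<\infty$; the uniform Young-type inequality does hold with a constant independent of $\xi$ because $[\kappa^{-},\kappa^{+}]$ is a compact subset of $(1,\infty)$, and integrating it in both directions gives $\rho(\phi_k)\to\rho(\phi)$; and the hard direction $(3)\Rightarrow(2)$ is handled correctly by the subsequence-plus-contradiction scheme (which is exactly how one upgrades convergence in measure, giving a.e.\ convergence only along subsequences, to a statement about the full sequence) combined with a Brezis--Lieb truncation dominated by $(C_\varepsilon+1)|\phi|^{\kappa(\xi)}\in\mathscr{L}^{1}(\Omega)$.

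One bookkeeping slip you should fix. In $(3)\Rightarrow(2)$ you say the pointwise bound follows from the Young-type inequality ``applied to $a=\phi_k$, $b=\phi$.'' With your stated form $|a|^{p}\leq(1+\varepsilon)|b|^{p}+C_\varepsilon|a-b|^{p}$, that substitution yields $|\phi_k|^{p}\leq(1+\varepsilon)|\phi|^{p}+C_\varepsilon|\phi_k-\phi|^{p}$, which places the small coefficient $\varepsilon$ on $|\phi|^{p}$ and the large constant $C_\varepsilon$ on $|\phi_k-\phi|^{p}$ --- exactly the wrong way around for the domination argument, since the term carrying the large constant must be the fixed integrable majorant $|\phi|^{\kappa(\xi)}$, while the term that is merely bounded, $|\phi_k-\phi|^{\kappa(\xi)}$, must carry the small coefficient. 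The correct substitution is $a=\phi_k$, $b=\phi_k-\phi$ (so that $a-b=\phi$), giving $|\phi_k|^{p}-|\phi_k-\phi|^{p}\leq\varepsilon|\phi_k-\phi|^{p}+C_\varepsilon|\phi|^{p}$, together with the symmetric application for the reverse inequality (absorbing the resulting $\varepsilon|\phi_k|^{p}$ term back into $\varepsilon|\phi_k-\phi|^{p}$ and $|\phi|^{p}$ at the cost of adjusting constants). The two-sided pointwise inequality you actually write down is the right one and is derivable this way, so this is a local repair of the justification, not a gap in the proof.
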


\begin{proposition}{\rm \cite{Sousa80,Sousa}} For any $\phi\in \mathcal{H}^{\mu,\nu;\,\psi}_{\kappa(\xi)}(\Lambda)$ there exists a positive constant $c$ such that $$||\phi||_{\mathscr{L}^{\kappa(\xi)}(\Lambda)}\leq c\, \left\|^{\rm H}\mathfrak{D}^{\mu,\nu;\,\psi}_{0+}\phi\right\|_{\mathscr{L}^{\kappa(\xi)}(\Lambda)}.$$
\end{proposition}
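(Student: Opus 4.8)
The plan is to derive the inequality from two ingredients: an inversion formula that writes $\phi$ as a $\psi$-fractional integral of its $\psi$-Hilfer derivative, and the boundedness of that fractional integral operator on the variable exponent space $\mathscr{L}^{\kappa(\xi)}(\Lambda)$. Throughout I set $g:={}^{\rm H}\mathfrak{D}^{\mu,\nu;\,\psi}_{0+}\phi$, which lies in $\mathscr{L}^{\kappa(\xi)}(\Lambda)$ precisely by the definition of $\mathcal{H}^{\mu,\nu;\,\psi}_{\kappa(\xi)}(\Lambda)$. Since a Poincaré-type bound genuinely requires vanishing boundary data, I would carry out the argument on $C_{0}^{\infty}(\Lambda)$ and then pass to the closure $\mathcal{H}^{\mu,\nu;\,\psi}_{\kappa(\xi),0}(\Lambda)$, which is the space on which the inequality is actually used in the variational problem.

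\textbf{Step 1 (inversion).} First I would establish $\phi={\bf I}^{\mu,\psi}_{0+}g$. For $\phi\in C_{0}^{\infty}(\Lambda)$ this is the $\psi$-fractional fundamental theorem of calculus: composing ${\bf I}^{\mu,\psi}_{0+}$ with ${}^{\rm H}\mathfrak{D}^{\mu,\nu;\,\psi}_{0+}$ and using the semigroup law ${\bf I}^{\alpha,\psi}{\bf I}^{\beta,\psi}={\bf I}^{\alpha+\beta,\psi}$ together with the splitting ${}^{\rm H}\mathfrak{D}^{\mu,\nu;\,\psi}_{0+}={\bf I}^{\nu(1-\mu),\psi}_{0+}D^{\gamma;\psi}_{0+}$ with $\gamma=\mu+\nu(1-\mu)$, every boundary contribution that normally survives the inversion vanishes because $\phi$ has compact support in $\Lambda$. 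This is exactly the vanishing of the terms carrying the factors $d_{j}$ and $(\psi(\xi_{j})-\psi(\theta_{j}))^{\gamma-1}$ that was exploited in the proof of \eqref{mera}. Density of $C_{0}^{\infty}(\Lambda)$ in $\mathcal{H}^{\mu,\nu;\,\psi}_{\kappa(\xi),0}(\Lambda)$ then extends the identity to the whole space.

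\textbf{Step 2 (boundedness of ${\bf I}^{\mu,\psi}_{0+}$).} The kernel of ${\bf I}^{\mu,\psi}_{0+}$ is
\[
K(\xi,s)=\frac{1}{\prod_{j}\Gamma(\mu_{j})}\prod_{j}\psi'(s_{j})\bigl(\psi(\xi_{j})-\psi(s_{j})\bigr)^{\mu_{j}-1}
\]
for $s_{j}<\xi_{j}$ and zero otherwise. Since $\Lambda$ is bounded, $\psi'$ is continuous hence bounded on $\bar\Lambda$, and each $\mu_{j}\in(0,1)$, the singularity $(\psi(\xi_{j})-\psi(s_{j}))^{\mu_{j}-1}$ is integrable. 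A direct computation yields the Schur-type bounds $\sup_{\xi}\int_{\Lambda}K(\xi,s)\,ds\le C_{1}$ and $\sup_{s}\int_{\Lambda}K(\xi,s)\,d\xi\le C_{2}$, which give boundedness of ${\bf I}^{\mu,\psi}_{0+}$ on $\mathscr{L}^{1}(\Lambda)$ and on $\mathscr{L}^{\infty}(\Lambda)$. To upgrade to $\mathscr{L}^{\kappa(\xi)}(\Lambda)$ I would estimate the modular $\rho\bigl({\bf I}^{\mu,\psi}_{0+}g\bigr)$: splitting $K=K^{1/\kappa'(s)}K^{1/\kappa(s)}$ with $\kappa'(\cdot)$ the pointwise conjugate exponent and applying the generalized Hölder inequality for variable exponent Lebesgue spaces, I would bound $\rho\bigl({\bf I}^{\mu,\psi}_{0+}g\bigr)$ by a constant multiple of $\rho(g)$, using the integrability bounds above. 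The modular–norm relations recalled in the propositions (items 3)–4)) then convert this into $\|{\bf I}^{\mu,\psi}_{0+}g\|_{\kappa(\xi)}\le c\,\|g\|_{\kappa(\xi)}$. Combining with Step 1 gives $\|\phi\|_{\mathscr{L}^{\kappa(\xi)}(\Lambda)}=\|{\bf I}^{\mu,\psi}_{0+}g\|_{\mathscr{L}^{\kappa(\xi)}(\Lambda)}\le c\,\|{}^{\rm H}\mathfrak{D}^{\mu,\nu;\,\psi}_{0+}\phi\|_{\mathscr{L}^{\kappa(\xi)}(\Lambda)}$.

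The main obstacle is Step 2 in the variable exponent setting. For a constant exponent the conclusion is immediate from Young's inequality once the kernel is integrable, but for variable $\kappa(\xi)$ the operator is not a genuine convolution (the function $\psi$ and the varying exponent both break the translation structure), so no naive interpolation between the $\mathscr{L}^{1}$ and $\mathscr{L}^{\infty}$ estimates is available. Controlling $\rho({\bf I}^{\mu,\psi}_{0+}g)$ by $\rho(g)$ forces one to track carefully the regions where $\kappa(\xi)<1$ versus $\kappa(\xi)>1$ through the modular estimates, and to manage the multi-dimensional product structure of $K$ so that the final constant $c$ depends only on $\mu$, $\psi$, $\Lambda$, $\kappa^{-}$ and $\kappa^{+}$. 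This is where I expect the technical weight of the argument to concentrate.
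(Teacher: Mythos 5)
The paper itself offers no proof of this proposition: it is quoted from \cite{Sousa80,Sousa}, so your proposal can only be measured against the arguments in those references, and its two-step skeleton (fractional inversion plus boundedness of the $\psi$-Riemann--Liouville integral) is indeed the route taken there. Your Step 1 is sound: for $\phi\in C_{0}^{\infty}(\Lambda)$ the semigroup law and the vanishing of the boundary terms give $\phi={\bf I}^{\mu,\psi}_{0+}\bigl({}^{\rm H}\mathfrak{D}^{\mu,\nu;\,\psi}_{0+}\phi\bigr)$, and your decision to work on the zero-boundary space $\mathcal{H}^{\mu,\nu;\,\psi}_{\kappa(\xi),0}(\Lambda)$ is the correct reading of the statement, since the inequality cannot hold for arbitrary elements of the full space.

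The genuine gap is in Step 2, and it is exactly where you flagged it but did not resolve it. The Schur-type scheme breaks on the exponent mismatch: applying H\"older pointwise at fixed $\xi$ produces $\int_{\Lambda}K(\xi,s)|g(s)|^{\kappa(\xi)}\,ds$ rather than $\int_{\Lambda}K(\xi,s)|g(s)|^{\kappa(s)}\,ds$, and your splitting $K=K^{1/\kappa'(s)}K^{1/\kappa(s)}$ followed by generalized H\"older leaves a norm-of-norm expression whose modular cannot be converted into $\rho(g)$ without extra hypotheses (log-H\"older continuity of $\kappa$) or a localization argument; also your worry about regions where $\kappa(\xi)<1$ is moot, since $\kappa^{-}>1$ by standing assumption. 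The good news is that the gap closes much more cheaply by using the paper's hypothesis $\frac{1}{\kappa}<\mu$, which you never invoked. Writing $q(\cdot)=\kappa'(\cdot)$ for the conjugate exponent, one has $q^{+}=(\kappa^{-})'<\frac{1}{1-\mu_{j}}$, hence $(\mu_{j}-1)q(s)>-1$ and the modular $\int_{\Lambda}K(\xi,s)^{q(s)}\,ds$ is finite and bounded uniformly in $\xi$; thus $\sup_{\xi}\|K(\xi,\cdot)\|_{q(\cdot)}\leq C$. The H\"older-type inequality (Proposition \ref{p3}) then gives, for $g={}^{\rm H}\mathfrak{D}^{\mu,\nu;\,\psi}_{0+}\phi$,
\begin{equation*}
\bigl|{\bf I}^{\mu,\psi}_{0+}g(\xi)\bigr|\leq\Bigl(\tfrac{1}{\kappa^{-}}+\tfrac{1}{q^{-}}\Bigr)\,\|K(\xi,\cdot)\|_{q(\cdot)}\,\|g\|_{\kappa(\xi)}\leq C'\,\|g\|_{\kappa(\xi)}\quad\text{for a.e. }\xi\in\Lambda,
\end{equation*}
i.e.\ an $\mathscr{L}^{\infty}$ bound, and since $\Lambda$ is bounded, $\mathscr{L}^{\infty}(\Lambda)\hookrightarrow\mathscr{L}^{\kappa(\xi)}(\Lambda)$, which yields $\|\phi\|_{\kappa(\xi)}=\|{\bf I}^{\mu,\psi}_{0+}g\|_{\kappa(\xi)}\leq c\,\|g\|_{\kappa(\xi)}$ with no interpolation, no modular bookkeeping, and a constant depending only on $\mu$, $\psi$, $\Lambda$, $\kappa^{-}$ and $\kappa^{+}$. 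You should replace your Step 2 by this direct estimate; as proposed, that step is not a proof.
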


{\color{red} In this sense, we have that the norms $||\phi||$ and $\left\|^{\rm H}\mathfrak{D}^{\mu,\nu;\,\psi}_{0+}\phi \right\|_{\mathscr{L}^{\kappa(\xi)}(\Lambda)}$ are equivalent in the space $\mathcal{H}^{\mu,\nu;\psi}_{\kappa(\xi)}(\Lambda)$, so let's use $||\phi||=\left\|^{\rm H}\mathfrak{D}^{\mu,\nu;\,\psi}_{0+}\phi\right\|_{\mathscr{L}^{\kappa(\xi)}(\Lambda)}$, for simplicity {\rm \cite{Sousa80,Sousa}}.}

\begin{proposition}{\rm\cite{Srivastava}}\label{p1} Assume that the boundary of $\Lambda$ possess the property $\kappa\in C(\bar{\Lambda})$ with $\kappa(\xi)<2.$ If $q\in C({\bar{\Lambda}}) $ and $1\leq h(\xi)\leq \kappa^{*}_{\mu}(\xi),~ (1\leq q(\xi)<\kappa^{*}_{\mu}(\xi))$ for $\xi\in\Lambda$ then there is a continuous (compact) embedding $ \mathcal{H}^{\mu,\nu;\,\psi}_{\kappa(\xi)}(\Lambda) \hookrightarrow\mathscr{L}^{q(\xi)}(\Lambda),$ whose $\kappa^{*}_\mu =\dfrac{2 \kappa }{2-\mu {\kappa}}$. 
\end{proposition}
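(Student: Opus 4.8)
The plan is to reduce the embedding to a boundedness and a compactness statement for the underlying $\psi$-fractional integral operator, and then to invoke the corresponding variable-exponent fractional Sobolev inequality. First I would use the norm equivalence recorded above, so that it suffices to control $\|\phi\|_{\mathscr{L}^{q(\xi)}(\Lambda)}$ by $\left\|^{\rm H}\mathfrak{D}^{\mu,\nu;\,\psi}_{0+}\phi\right\|_{\mathscr{L}^{\kappa(\xi)}(\Lambda)}$. Since elements of $\mathcal{H}^{\mu,\nu;\,\psi}_{\kappa(\xi),0}(\Lambda)$ are limits of $C_{0}^{\infty}(\Lambda)$ functions, I would pass to the dense subspace and use the inversion formula of $\psi$-fractional calculus to write $\phi = {\bf I}^{\mu;\psi}_{0+}\big(\,^{\rm H}\mathfrak{D}^{\mu,\nu;\,\psi}_{0+}\phi\big)$, the boundary terms vanishing on $C_{0}^{\infty}(\Lambda)$ and the identity extending by density. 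Thus $\phi$ lies in the image of the Riemann--Liouville-type integral applied to an $\mathscr{L}^{\kappa(\xi)}$ function; performing the change of variables $\eta=\psi(\xi)$ turns the kernel $(\psi(\xi)-\psi(s))^{\mu-1}$ into the standard fractional-integration kernel and places us in the classical two-dimensional setting with smoothness order $\mu$.

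For the continuous embedding I would then apply the variable-exponent Hardy--Littlewood--Sobolev inequality to the fractional integral, giving
\begin{equation*}
\big\| {\bf I}^{\mu;\psi}_{0+} g \big\|_{\mathscr{L}^{\kappa^{*}_{\mu}(\xi)}(\Lambda)} \leq C\,\|g\|_{\mathscr{L}^{\kappa(\xi)}(\Lambda)},
\end{equation*}
with $\kappa^{*}_{\mu}=\dfrac{2\kappa}{2-\mu\kappa}$ the critical exponent in dimension two. Combined with the representation of $\phi$, this yields $\mathcal{H}^{\mu,\nu;\,\psi}_{\kappa(\xi)}(\Lambda)\hookrightarrow \mathscr{L}^{\kappa^{*}_{\mu}(\xi)}(\Lambda)$. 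To reach a general $q$ with $1\le q(\xi)\le \kappa^{*}_{\mu}(\xi)$ I would use that $\Lambda$ is bounded together with the variable-exponent Hölder inequality, which gives the nesting $\mathscr{L}^{\kappa^{*}_{\mu}(\xi)}(\Lambda)\hookrightarrow \mathscr{L}^{q(\xi)}(\Lambda)$ whenever $q(\xi)\le \kappa^{*}_{\mu}(\xi)$; composing the two embeddings closes the continuous case.

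For compactness when $q(\xi)<\kappa^{*}_{\mu}(\xi)$, I would exploit that $\kappa$ and $q$ are continuous on the compact set $\bar\Lambda$, so that $\kappa^{*}_{\mu}(\xi)-q(\xi)\ge\delta>0$ uniformly. Covering $\bar\Lambda$ by finitely many small cubes on which $\kappa$, $q$ and $\kappa^{*}_{\mu}$ are nearly constant, and gluing with a partition of unity, I would reduce the claim to the \emph{constant}-exponent compact fractional Sobolev embedding on each cube (available from the classical theory after the $\eta=\psi(\xi)$ reduction), where the strict inequality $q<\kappa^{*}_{\mu}$ supplies the gain in integrability needed for the Fréchet--Kolmogorov compactness criterion; passing back through the partition of unity and the finite cover transfers compactness to the variable-exponent embedding.

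The main obstacle is the variable-exponent Hardy--Littlewood--Sobolev step: unlike the constant-exponent case, boundedness of the fractional integral between variable-exponent Lebesgue spaces is not a consequence of mere continuity of the exponent but normally requires a log-Hölder modulus of continuity for $\kappa(\cdot)$; verifying (or assuming) this regularity, and controlling the interaction between the genuine $\psi$-weight and the product structure of the two-variable kernel after the change of variables, is the delicate point. A secondary technical care is making the inversion $\phi={\bf I}^{\mu;\psi}_{0+}\big(\,^{\rm H}\mathfrak{D}^{\mu,\nu;\,\psi}_{0+}\phi\big)$ rigorous on all of $\mathcal{H}^{\mu,\nu;\,\psi}_{\kappa(\xi),0}(\Lambda)$, which follows from the density of $C_{0}^{\infty}(\Lambda)$ and the vanishing of the associated boundary contributions.
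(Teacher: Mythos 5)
You should first note that the paper itself offers no proof of this proposition: it is imported verbatim from \cite{Srivastava} as a known embedding theorem for the space $\mathcal{H}^{\mu,\nu;\,\psi}_{\kappa(\xi)}(\Lambda)$, so there is no internal argument to compare yours against. Judged on its own, your strategy (norm equivalence, inversion $\phi={\bf I}^{\mu;\psi}_{0+}\bigl(\,^{\rm H}\mathfrak{D}^{\mu,\nu;\,\psi}_{0+}\phi\bigr)$, variable-exponent Hardy--Littlewood--Sobolev for the critical continuous embedding, exponent-freezing plus partition of unity for the subcritical compact embedding) is the standard and reasonable route, and it is essentially the template used in the variable-exponent literature the paper builds on (Fan--Zhao, Fan--Shen--Zhao).

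There are, however, two genuine gaps. First, the one you flag yourself is not merely a technical caveat but a real mismatch with the statement: the proposition assumes only $\kappa\in C(\bar\Lambda)$, and under bare continuity the boundedness of the Riesz-type potential ${\bf I}^{\mu;\psi}_{0+}:\mathscr{L}^{\kappa(\xi)}\to\mathscr{L}^{\kappa^{*}_{\mu}(\xi)}$ can fail; the critical (continuous) embedding genuinely requires a log-H\"older modulus of continuity (or Lipschitz regularity) of the exponent. So your proof, as designed, proves a statement with a strictly stronger hypothesis than the one asserted; only the subcritical compact part survives with continuity alone, since there the uniform gap $\kappa^{*}_{\mu}(\xi)-q(\xi)\geq\delta>0$ lets you freeze exponents locally with a small loss. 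Second, your density step passes to $C_{0}^{\infty}(\Lambda)$, but $C_{0}^{\infty}(\Lambda)$ is dense in $\mathcal{H}^{\mu,\nu;\,\psi}_{\kappa(\xi),0}(\Lambda)$, not in the full space $\mathcal{H}^{\mu,\nu;\,\psi}_{\kappa(\xi)}(\Lambda)$ for which the proposition is stated; on the full space the inversion formula acquires a nonvanishing boundary term proportional to $(\psi(\xi)-\psi(\theta))^{\gamma-1}\,{\bf I}^{(1-\nu)(1-\mu);\psi}_{\theta}\phi(\theta)$, which must be estimated separately (and which also lies behind the norm equivalence you invoke, itself only valid on the subspace with a Poincar\'e-type inequality). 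Either restrict the claim to $\mathcal{H}^{\mu,\nu;\,\psi}_{\kappa(\xi),0}(\Lambda)$ --- which is all the rest of the paper actually needs --- or control that boundary contribution explicitly.
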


We write 
	\begin{equation*}
	    \mathcal{I}^{\mu,\nu}(\phi)=\int_{\Lambda}\dfrac{1}{\kappa(\xi)}\left\vert^{\rm H}\mathfrak{D}^{\mu,\nu;\,\psi}_{0+}\phi\right\vert^{\kappa(\xi)}d \xi
	\end{equation*}
where meas $\left\{ \xi\in \Lambda,\,\phi^{+}>\theta\right\}>0$. We denote $\mathcal{L}^{\mu,\nu}=(\mathcal{I}^{\mu,\nu})':\mathcal{H}^{\mu, \nu;\psi}_{\kappa(\xi)}(\Lambda)\rightarrow \left(\mathcal{H}^{\mu, \nu;\psi}_{\kappa(\xi)}(\Lambda)\right)^{*}$ then
\begin{equation*}
(\mathcal{L}^{\mu,\nu} (\phi,v))=\int_{\Lambda} \left\vert^{\rm H}\mathfrak{D}^{\mu,\,\nu;\,\psi}_{0+}\phi\right\vert^{\kappa(\xi)-2}\,\, ^{\rm H}\mathfrak{D}^{\mu,\,\nu;\,\psi}_{0+}\phi\,\, ^{\rm H}\mathfrak{D}^{\mu,\,\nu;\,\psi}_{0+}v\,\, d\xi
\end{equation*}
for all $\phi,v\in \mathcal{H}^{\mu, \nu;\psi}_{\kappa(\xi)}(\Lambda)$.

\begin{proposition}\label{p2} 1.$\mathcal{L}^{\mu,\nu}:\mathcal{H}^{\mu, \nu;\psi}_{\kappa(\xi)}(\Lambda)\rightarrow \left(\mathcal{H}^{\mu, \nu;\psi}_{\kappa(\xi)}(\Lambda)\right)^{*}$ is a continuous, bounded and strictly monotone operator;
\\
2. $\mathcal{L}^{\mu,\nu}$ is a mapping of type $(S_{+})$, i.e., if $\phi_{n}\rightharpoonup \phi$ in $\mathcal{H}^{\mu, \nu;\psi}_{\kappa(\xi)}(\Lambda)$ and $\overline{\lim}_{n\rightarrow+\infty} (\mathcal{L}^{\mu,\nu}(\phi_{n})-\mathcal{L}^{\mu,\nu}(\phi), \phi_{n}-\phi)\leq 0$, then $\phi_{n}\rightarrow \phi$ in $\mathcal{H}^{\mu, \nu;\psi}_{\kappa(\xi)}(\Lambda)$;
\\
3. $\mathcal{L}^{\mu,\nu}:\mathcal{H}^{\mu, \nu;\psi}_{\kappa(\xi)}(\Lambda)\rightarrow\left(\mathcal{H}^{\mu, \nu;\psi}_{\kappa(\xi)}(\Lambda)\right)^{*}$ is a homeomorphism.
\end{proposition}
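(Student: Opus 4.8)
The plan is to follow the now-standard scheme for $p(\xi)$-Laplacian operators, transported to the $\psi$-Hilfer setting, relying on the modular--norm relations recorded above and on Proposition~\ref{proposition2.4}. Throughout I abbreviate $w_\phi := {}^{\rm H}\mathfrak{D}^{\mu,\nu;\psi}_{0+}\phi \in \mathscr{L}^{\kappa(\xi)}(\Lambda)$, so that $\|\phi\| = |w_\phi|_{\kappa(\xi)}$ and $(\mathcal{L}^{\mu,\nu}(\phi),v) = \int_\Lambda |w_\phi|^{\kappa(\xi)-2} w_\phi\, w_v\, d\xi$. For boundedness (part 1), I note that $|w_\phi|^{\kappa(\xi)-1}$ lies in $\mathscr{L}^{\kappa'(\xi)}(\Lambda)$ with $\kappa'(\xi)=\kappa(\xi)/(\kappa(\xi)-1)$; the Hölder inequality in variable-exponent spaces gives $|(\mathcal{L}^{\mu,\nu}(\phi),v)| \le C\,\bigl|\,|w_\phi|^{\kappa(\xi)-1}\bigr|_{\kappa'(\xi)}\,|w_v|_{\kappa(\xi)}$, and the modular--norm relations bound $\bigl|\,|w_\phi|^{\kappa(\xi)-1}\bigr|_{\kappa'(\xi)}$ by a power of $\|\phi\|$, so bounded sets map to bounded sets. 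For continuity, if $\phi_n\to\phi$ then $w_{\phi_n}\to w_\phi$ in $\mathscr{L}^{\kappa(\xi)}(\Lambda)$; by Proposition~\ref{proposition2.4} a subsequence converges a.e.\ with modular convergence, and since $s\mapsto|s|^{\kappa(\xi)-2}s$ is continuous, $|w_{\phi_n}|^{\kappa(\xi)-2}w_{\phi_n}\to|w_\phi|^{\kappa(\xi)-2}w_\phi$ in $\mathscr{L}^{\kappa'(\xi)}(\Lambda)$ (again via Proposition~\ref{proposition2.4}), which a routine subsequence argument upgrades to operator-norm convergence of the whole sequence. Strict monotonicity is then the pointwise inequality $(|a|^{p-2}a-|b|^{p-2}b)(a-b)>0$ for $a\neq b$, valid for all $p>1$, applied with $p=\kappa(\xi)$ and integrated.

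For the $(S_+)$ property (part 2), assume $\phi_n\rightharpoonup\phi$ and $\overline{\lim}_n(\mathcal{L}^{\mu,\nu}(\phi_n)-\mathcal{L}^{\mu,\nu}(\phi),\phi_n-\phi)\le 0$. Since $\mathcal{L}^{\mu,\nu}(\phi)$ is a fixed dual element and $\phi_n-\phi\rightharpoonup 0$, we get $(\mathcal{L}^{\mu,\nu}(\phi),\phi_n-\phi)\to 0$; combined with monotonicity, which forces the pairing to be nonnegative, this yields $\lim_n(\mathcal{L}^{\mu,\nu}(\phi_n)-\mathcal{L}^{\mu,\nu}(\phi),\phi_n-\phi)=0$. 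The integrand being nonnegative, it tends to $0$ in $\mathscr{L}^1(\Lambda)$, and from the quantitative monotonicity inequality appropriate to the range $1<\kappa^-\le\kappa^+<2$ one extracts $w_{\phi_n}\to w_\phi$ in measure together with modular convergence, whence $\phi_n\to\phi$ in $\mathcal{H}^{\mu,\nu;\psi}_{\kappa(\xi)}(\Lambda)$ by Proposition~\ref{proposition2.4}.

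For the homeomorphism claim (part 3), $\mathcal{L}^{\mu,\nu}$ is continuous, bounded and monotone on the reflexive space $\mathcal{H}^{\mu,\nu;\psi}_{\kappa(\xi)}(\Lambda)$ (Proposition~\ref{bana}), and it is coercive because $(\mathcal{L}^{\mu,\nu}(\phi),\phi)=\int_\Lambda|w_\phi|^{\kappa(\xi)}d\xi=\rho(w_\phi)\ge\|\phi\|^{\kappa^-}$ whenever $\|\phi\|\ge 1$. The Minty--Browder theorem then gives surjectivity, while strict monotonicity gives injectivity, so $\mathcal{L}^{\mu,\nu}$ is a bijection. To see that the inverse is continuous, suppose $\mathcal{L}^{\mu,\nu}(\phi_n)\to\mathcal{L}^{\mu,\nu}(\phi)$; coercivity bounds $(\phi_n)$, so along a subsequence $\phi_n\rightharpoonup\tilde\phi$, and $(\mathcal{L}^{\mu,\nu}(\phi_n)-\mathcal{L}^{\mu,\nu}(\tilde\phi),\phi_n-\tilde\phi)\to 0$; the $(S_+)$ property of part 2 then gives $\phi_n\to\tilde\phi$, injectivity forces $\tilde\phi=\phi$, and since every subsequence has a further subsequence converging to $\phi$, the whole sequence converges, establishing that $\mathcal{L}^{\mu,\nu}$ is a homeomorphism.

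I expect the $(S_+)$ property of part 2 to be the crux. Because the exponents lie strictly below $2$, the convenient inequality $(|a|^{p-2}a-|b|^{p-2}b)(a-b)\ge c|a-b|^p$ is unavailable, and one must instead use the weaker estimate valid for $1<p<2$ and combine it, through a Hölder splitting, with the modular--norm equivalences to pass from the vanishing of the integrated monotone quantity to genuine norm convergence; carefully tracking the $\xi$-dependence of $\kappa(\xi)$ throughout this passage is the delicate point, while parts 1 and 3 are then essentially formal consequences of monotone-operator theory.
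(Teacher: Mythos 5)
Your proposal is correct, and its overall architecture (elementary vector inequalities, the modular--norm relations of Proposition \ref{proposition2.4}, strict monotonicity $\Rightarrow$ injectivity, coercivity plus Minty--Browder $\Rightarrow$ surjectivity, and the $(S_+)$ property recycled to prove continuity of the inverse) coincides with the paper's. The genuine difference is in the crux you yourself identified, the middle of part 2. After reducing to $\lim_n(\mathcal{L}^{\mu,\nu}(\phi_n)-\mathcal{L}^{\mu,\nu}(\phi),\phi_n-\phi)=0$, you propose to feed the nonnegative integrand into the quantitative monotonicity inequality for $1<\kappa(\xi)<2$ and, via a H\"older splitting against the bounded factor $(|w_{\phi_n}|^{\kappa(\xi)}+|w_\phi|^{\kappa(\xi)})^{(2-\kappa(\xi))/\kappa(\xi)}$, extract directly that the modular of $w_{\phi_n}-w_\phi$ vanishes; this is essentially the Fan--Zhang argument, and it is shorter and quantitative, at the price of tracking the variable exponent through the splitting. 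The paper instead takes a softer analytic route: it uses the inequalities only to get convergence in measure of ${}^{\rm H}\mathfrak{D}^{\mu,\nu;\psi}_{0+}\phi_n$ (hence a.e.\ convergence along a subsequence), then combines Fatou's lemma with a Young-type estimate on $(\mathcal{L}^{\mu,\nu}(\phi_n),\phi_n-\phi)$ to obtain convergence of the weighted modulars (its Eq.~(\ref{(6)})), and finally invokes absolute equicontinuity of the integrals (a Vitali-type argument, via (\ref{(7)})--(\ref{(9)})) before applying Proposition \ref{proposition2.4}. The paper's chain is longer but avoids any pointwise quantitative estimate with $\xi$-dependent exponents; your route avoids Fatou/Vitali altogether but must execute the exponent-dependent H\"older step carefully. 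Both are standard and both close the argument; note also that your treatment of parts 1 and 3 is actually more detailed than the paper's, which dismisses continuity and boundedness as obvious and is somewhat loose in part 3 (e.g.\ it writes $f_n,f\in\mathcal{H}^{\mu,\nu;\,\psi}_{\kappa(\xi)}(\Lambda)$ where the dual space is meant, an imprecision your version repairs).
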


\begin{proof} 1. It is obvious that $\mathcal{L}^{\mu,\nu}$ is continuous and bounded. {\color{blue} For any $\xi,y\in \Lambda$ \cite{Fan1,AN}}
\begin{equation}\label{(1)}
    \left[\left( |\xi|^{\kappa-2} \xi- |y|^{\kappa-2} y)\right (\xi-y) \right] \left( |\xi|^{\kappa}-|y|^{\kappa} \right)^{(2-\kappa)/\kappa} \geq (\kappa-1)|\xi-y|^{\kappa}
\end{equation}
with $1<\kappa<2$ and
\begin{equation}\label{(2)}
    \left( |\xi|^{\kappa-2} \xi- |y|^{\kappa-2} y)\right (\xi-y) \geq \left(\frac{1}{2}\right)^{\kappa}|\xi-y|^{\kappa}, \,\, \kappa\geq 2.
\end{equation}

2. {\color{red}From inequality (\ref{(1)}), if $\phi_{n}\rightharpoonup \phi$ and $\overline{\lim}_{n\rightarrow+\infty} (\mathcal{L}^{\mu,\nu}(\phi_{n})- \mathcal{L}^{\mu,\nu}(\phi),\phi_{n}-\phi )\leq 0$, then $\lim_{n\rightarrow+\infty} \left(\mathcal{L}^{\mu,\nu}(\phi_{n})-\mathcal{L}^{\mu,\nu}(\phi),\phi_{n}-\phi \right)=0$. In view of inequalities (\ref{(1)}) and (\ref{(2)}), 
$^{\rm H}\mathfrak{D}^{\mu,\,\nu;\,\psi}_{0+}\phi_{n}$ goes in measure to $^{\rm H}\mathfrak{D}^{\mu,\,\nu;\,\psi}_{0+}\phi$ in $\Lambda$, so we get a subsequence, satisfying $^{\rm H}\mathfrak{D}^{\mu,\,\nu;\,\psi}_{0+}\phi_{n}\rightarrow\,\, ^{\rm H}\mathfrak{D}^{\mu,\,\nu;\,\psi}_{0+}\phi$ a.e. $\xi\in\Lambda$. Using {\color{blue}Fatou's} lemma, yields
\begin{equation}\label{(3)}
    \underline{\lim}_{n\rightarrow+\infty} \int_{\Lambda} \frac{1}{\kappa(\xi)} \, \left\vert^{\rm H}\mathfrak{D}^{\mu,\,\nu;\,\psi}_{0+}\phi_{n}\right\vert^{\kappa(\xi)} d\xi \geq \int_{\Lambda} \frac{1}{\kappa(\xi)} \, \left\vert^{\rm H}\mathfrak{D}^{\mu,\,\nu;\,\psi}_{0+}\phi \right\vert^{\kappa(\xi)} d\xi.
\end{equation}}

{\color{red}
From $\phi_{n}\rightharpoonup \phi$, yields
\begin{equation}\label{(4)}
\lim_{n\rightarrow+\infty} (\mathcal{L}^{\mu,\nu} (\phi_{n}),\phi_{n}-\phi )=\, \lim_{n\rightarrow+\infty} (\mathcal{L}^{\mu,\nu} (\phi_{n}) -\mathcal{L}^{\mu,\nu} (\phi), \phi_{n}-\phi )=0.
\end{equation}}

{\color{red} On the other hand, {\color{blue}we also have}
\begin{eqnarray}\label{(5)}
    &&\left( \mathcal{L}^{\mu,\nu}(\phi_{n}),\phi_{n}-\phi \right)\notag\\&=&\int_{\Lambda} \left\vert^{\rm H}\mathfrak{D}^{\mu,\,\nu;\,\psi}_{0+}\phi_{n}\right\vert^{\kappa(\xi)-2}\,\, ^{\rm H}\mathfrak{D}^{\mu,\,\nu;\,\psi}_{0+}\phi_{n}\,\, ^{\rm H}\mathfrak{D}^{\mu,\,\nu;\,\psi}_{0+}(\phi_{n}-\phi) d\xi\notag\\
    &=& \int_{\Lambda}  \left\vert^{\rm H}\mathfrak{D}^{\mu,\,\nu;\,\psi}_{0+}\phi_{n}\right\vert^{\kappa(\xi)} d\xi - \, \int_{\Lambda} \left\vert^{\rm H}\mathfrak{D}^{\mu,\,\nu;\,\psi}_{0+}\phi_{n}\right\vert^{\kappa(\xi)-2}\,\, ^{\rm H}\mathfrak{D}^{\mu,\,\nu;\,\psi}_{0+}\phi_{n}\,\, ^{\rm H}\mathfrak{D}^{\mu,\,\nu;\,\psi}_{0+}\phi d\xi\notag \\
    &\geq & \int_{\Lambda} \left\vert^{\rm H}\mathfrak{D}^{\mu,\,\nu;\,\psi}_{0+}\phi_{n}\right\vert^{\kappa(\xi)} d\xi - \int_{\Lambda} \left( \frac{\kappa(\xi)-1}{\kappa(\xi)}  \left\vert^{\rm H}\mathfrak{D}^{\mu,\,\nu;\,\psi}_{0+}\phi_{n}\right\vert^{\kappa(\xi)} d\xi+ \frac{1}{\kappa(\xi)} \left\vert^{\rm H}\mathfrak{D}^{\mu,\,\nu;\,\psi}_{0+}\phi\right\vert^{\kappa(\xi)} d\xi \right)\notag\\
    &\geq& \int_{\Lambda}\frac{1}{\kappa(\xi)} \left\vert^{\rm H}\mathfrak{D}^{\mu,\,\nu;\,\psi}_{0+}\phi_{n}\right\vert^{\kappa(\xi)} d\xi - \int_{\Lambda}  \frac{1}{\kappa(\xi)}  \left\vert^{\rm H}\mathfrak{D}^{\mu,\,\nu;\,\psi}_{0+}\phi\right\vert^{\kappa(\xi)} d\xi. \notag\\
\end{eqnarray}}

{\color{red}Using the inequalities (\ref{(3)})-(\ref{(5)}), it follows that
\begin{equation}\label{(6)}
    \lim_{n\rightarrow+\infty} \int_{\Lambda} \frac{1}{\kappa(\xi)} \left\vert^{\rm H}\mathfrak{D}^{\mu,\,\nu;\,\psi}_{0+}\phi_{n}\right\vert^{\kappa(\xi)} d\xi = \int_{\Lambda} \frac{1}{\kappa(\xi)} \left\vert^{\rm H}\mathfrak{D}^{\mu,\,\nu;\,\psi}_{0+}\phi\right\vert^{\kappa(\xi)}d\xi.
\end{equation}

From Eq.(\ref{(6)}) it follows that the integral of the functions family $\left\{\dfrac{1}{\kappa(\xi)} \left\vert^{\rm H}\mathfrak{D}^{\mu,\,\nu;\,\psi}_{0+}\phi_{n}\right\vert^{\kappa(\xi)} \right\}$ possess absolutely equicontinuity on $\Lambda$. Since
\begin{equation}\label{(7)}
\frac{1}{\kappa(\xi)} \left\vert^{\rm H}\mathfrak{D}^{\mu,\,\nu;\,\psi}_{0+}\phi_{n}\,\,-\,\,^{\rm H}\mathfrak{D}^{\mu,\,\nu;\,\psi}_{0+}\phi \right\vert^{\kappa(\xi)}\leq c\left(\frac{1}{\kappa(\xi)} \left\vert^{\rm H}\mathfrak{D}^{\mu,\,\nu;\,\psi}_{0+}\phi_{n} \right\vert^{\kappa(\xi)}+\frac{1}{\kappa(\xi)} \left\vert^{\rm H}\mathfrak{D}^{\mu,\,\nu;\,\psi}_{0+}\phi\right\vert^{\kappa(\xi)} \right)
\end{equation}
the integrals of the family $\left\{\dfrac{1}{\kappa(\xi)} \left\vert^{\rm H}\mathfrak{D}^{\mu,\,\nu;\,\psi}_{0+}\phi_{n}\,\,-\,\,^{\rm H}\mathfrak{D}^{\mu,\,\nu;\,\psi}_{0+}\phi \right\vert^{\kappa(\xi)} \right\}$ are also absolutely equicontinuous on $\Lambda$ and therefore
\begin{equation}\label{(8)}
    \lim_{n\rightarrow+\infty} \int_{\Lambda} \frac{1}{\kappa(\xi)} \left\vert^{\rm H}\mathfrak{D}^{\mu,\,\nu;\,\psi}_{0+}\phi_{n}\,\,-\,\,^{\rm H}\mathfrak{D}^{\mu,\,\nu;\,\psi}_{0+}\phi \right\vert^{\kappa(\xi)} d\xi=0.
\end{equation}

Using Eq.(\ref{(8)}), one has
\begin{equation}\label{(9)}
    \lim_{n\rightarrow+\infty} \int_{\Lambda}  \left\vert^{\rm H}\mathfrak{D}^{\mu,\,\nu;\,\psi}_{0+}\phi_{n}\,\,-\,\,^{\rm H}\mathfrak{D}^{\mu,\,\nu;\,\psi}_{0+}\phi \right\vert^{\kappa(\xi)} d\xi=0.
\end{equation}

From {\bf Proposition \ref{proposition2.4}} and Eq.(\ref{(9)}), $\phi_{n}\rightarrow \phi$, i.e., $\mathcal{L}^{\mu,\nu}$ is of type $(S_{+})$.

3. By the strictly monotonicity, $\mathcal{L}^{\mu,\nu}$ is an injection. Since
\begin{equation*}
\lim_{||\phi||\rightarrow+\infty} \frac{(\mathcal{L}^{\mu,\nu}\phi,\phi) }{||\phi||}  =\,\, \lim_{||\phi||\rightarrow+\infty}
\dfrac{ \displaystyle\int_{\Lambda}\left\vert \,\,^{\rm H}\mathfrak{D}^{\mu,\,\nu;\,\psi}_{0+}\phi \right\vert^{\kappa(\xi)} d\xi}{||\phi||}=+\infty
\end{equation*}
$\mathcal{L}^{\mu,\nu}$ is coercive, thus $\mathcal{L}^{\mu,\nu}$ is a surjection in view of Minty-Browder theorem \cite{14}. Hence $\mathcal{L}^{\mu,\nu}$ has an inverse mapping
$(\mathcal{L}^{\mu,\nu})^{-1}: \left(\mathcal{H}^{\mu,\nu;\,\psi}_{\kappa(\xi)}(\Lambda)\right)^{*}\rightarrow \mathcal{H}^{\mu,\nu;\,\psi}_{\kappa(\xi)}(\Lambda)$. Therefore, the continuity of $(\mathcal{L}^{\mu,\nu})^{-1}$ is sufficient to ensure $\mathcal{L}^{\mu,\nu}$ to be a homeomorphism. If $f_{n},f \in \mathcal{H}^{\mu,\nu;\,\psi}_{\kappa(\xi)}(\Lambda)$, $f_{n}\rightarrow f$, let $\phi_{n}=(\mathcal{L}^{\mu,\nu})^{-1} (f_{n})$, $\phi=(\mathcal{L}^{\mu,\nu})^{-1} (f)$, then $\mathcal{L}^{\mu,\nu}(\phi_{n})=f_{n}$, $\mathcal{L}^{\mu,\nu}(\phi)=f$. {\color{blue}So $\left\{\phi_{n} \right\}$ is bounded in $\mathcal{H}^{\mu,\nu;\,\psi}_{\kappa(\xi)}(\Lambda)$. Without loss of generality, we can assume that $\phi_{n}\rightharpoonup \phi_{0}$. Since $f_{n}\rightarrow f$, then}
\begin{equation*}
    \lim_{n\rightarrow+\infty} \left( \mathcal{L}^{\mu,\nu}(\phi_{n}) - \mathcal{L}^{\mu,\nu}(\phi_{0}),\phi_{n}-\phi_{0} \right)= \lim_{n\rightarrow+\infty} (f_{n},\phi_{n}-\phi_{0})=0.
\end{equation*}
Since $\mathcal{L}^{\mu,\nu}$ is of type $(S_{+})$, $\phi_{n}\rightarrow \phi_{0}$, we conclude that $\phi_{n}\rightarrow\phi$, so $(\mathcal{L}^{\mu,\nu})^{-1}$ is continuous.}
\end{proof}

\begin{proposition}\label{p3}{\rm \cite{Fan1}} {\rm(Hölder-type inequality)} The conjugate space of $\mathscr{L}^{\kappa(\xi)}(\Lambda)$ is $\mathscr{L}^{q(\xi)}(\Lambda)$ where $\dfrac{1}{q(\xi)}+\dfrac{1}{\kappa(\xi)}=1$. For every $\phi\in \mathscr{L}^{\kappa(\xi)}(\Lambda)$ and $v\in\mathscr{L}^{q(\xi)}(\Lambda),$ it follows that
\begin{equation*}
   \left\vert \int_{\Lambda}\phi vd \xi\right\vert \leq\left(\frac{1}{\kappa^{-}}+\frac{1}{q^{-}}\right)||\phi||_{\kappa(\xi)}||v||_{q(\xi)}.
\end{equation*}

\end{proposition}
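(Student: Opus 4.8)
The plan is to reduce the claim to the pointwise Young inequality combined with the modular characterization of the Luxemburg norm recorded in the preceding propositions. First I would dispose of the trivial cases: if $\phi=0$ or $v=0$ almost everywhere, both sides vanish and there is nothing to prove. So I assume $\phi\neq 0$ and $v\neq 0$ and set $\lambda=\|\phi\|_{\kappa(\xi)}$ and $\eta=\|v\|_{q(\xi)}$, both strictly positive.

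Next I would invoke part 1) of the modular proposition (the one introducing $\rho$) to conclude that
\[
\rho\!\left(\frac{\phi}{\lambda}\right)=\int_{\Lambda}\left|\frac{\phi(\xi)}{\lambda}\right|^{\kappa(\xi)}d\xi\le 1,\qquad \int_{\Lambda}\left|\frac{v(\xi)}{\eta}\right|^{q(\xi)}d\xi\le 1.
\]
The heart of the argument is then the pointwise Young inequality applied, at each fixed $\xi$, with the conjugate exponents $\kappa(\xi),q(\xi)$: for nonnegative reals $a,b$ one has $ab\le \frac{a^{\kappa(\xi)}}{\kappa(\xi)}+\frac{b^{q(\xi)}}{q(\xi)}$, which I would use with $a=|\phi(\xi)|/\lambda$ and $b=|v(\xi)|/\eta$. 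Integrating over $\Lambda$ and bounding $\tfrac{1}{\kappa(\xi)}\le \tfrac{1}{\kappa^{-}}$ and $\tfrac{1}{q(\xi)}\le \tfrac{1}{q^{-}}$ gives
\[
\frac{1}{\lambda\eta}\int_{\Lambda}|\phi||v|\,d\xi\le \frac{1}{\kappa^{-}}\int_{\Lambda}\left|\frac{\phi}{\lambda}\right|^{\kappa(\xi)}d\xi+\frac{1}{q^{-}}\int_{\Lambda}\left|\frac{v}{\eta}\right|^{q(\xi)}d\xi\le \frac{1}{\kappa^{-}}+\frac{1}{q^{-}}.
\]
Rearranging and using $\left|\int_{\Lambda}\phi v\,d\xi\right|\le \int_{\Lambda}|\phi||v|\,d\xi$ then yields exactly the asserted bound with the constant $\tfrac{1}{\kappa^{-}}+\tfrac{1}{q^{-}}$ and the two norms on the right-hand side.

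The only genuinely delicate point is that the conjugacy relation $\tfrac{1}{\kappa(\xi)}+\tfrac{1}{q(\xi)}=1$ must hold at almost every point for the pointwise Young inequality to be legitimate with the same exponents that appear in the two modulars; this is guaranteed by hypothesis. A secondary technical issue is the integrability needed to split the integral, namely that $|\phi/\lambda|^{\kappa(\xi)}$ and $|v/\eta|^{q(\xi)}$ both lie in $\mathscr{L}^{1}(\Lambda)$, which is immediate from $\phi\in\mathscr{L}^{\kappa(\xi)}(\Lambda)$ and $v\in\mathscr{L}^{q(\xi)}(\Lambda)$. I do not anticipate any real obstacle here. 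Unlike the classical Hölder inequality, where one can normalize so that the constant is exactly $1$, the variable-exponent setting forces the slightly weaker constant $\tfrac{1}{\kappa^{-}}+\tfrac{1}{q^{-}}$ precisely because $\tfrac{1}{\kappa(\xi)}$ and $\tfrac{1}{q(\xi)}$ cannot be pulled out of the integral as constants; this is exactly what the factor on the right-hand side records.
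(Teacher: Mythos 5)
Your proof is correct: normalizing by the Luxemburg norms, applying the pointwise Young inequality with the conjugate exponents $\kappa(\xi)$, $q(\xi)$, bounding $\tfrac{1}{\kappa(\xi)}\le\tfrac{1}{\kappa^{-}}$, $\tfrac{1}{q(\xi)}\le\tfrac{1}{q^{-}}$, and using the modular bound $\rho(\phi/\|\phi\|_{\kappa(\xi)})\le 1$ yields exactly the stated constant. The paper itself offers no proof of this proposition (it is quoted from Fan--Zhang \cite{Fan1}), and your argument is precisely the standard one given in that source, so there is nothing to reconcile.
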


\begin{definition} We say that $\phi\in \mathcal{H}^{\mu,\nu;\,\psi}_{\kappa(\xi)}(\Lambda)$ is a weak solution of the problem {\rm(\ref{eq1})}, if
\begin{align}\label{a1}\mathfrak{M}\left(\int_{\Lambda}\frac{1}{\kappa(\xi)}\left\vert^{\rm H}\mathfrak{D}^{\mu,\,\nu;\,\psi}_{0+}\phi\right\vert^{\kappa(\xi)}d \xi\right)\int_{\Lambda}\left\vert^{\rm H}\mathfrak{D}^{\mu,\nu;\,\psi}_{0+}\phi\right\vert^{\kappa(\xi)-2}~^{\rm H}\mathfrak{D}^{\mu,\nu;\,\psi}_{0+}\phi~^{\rm H}\mathfrak{D}^{\mu,\nu;\,\psi}_{0+}v d \xi=\int_{\Lambda}\mathfrak{g}(\xi,\phi)v d \xi
\end{align} 
where $v\in \mathcal{H}^{\mu,\nu;\,\psi}_{\kappa(\xi)}(\Lambda).$
\end{definition}

{\color{red}Define
\begin{align}
	\Phi(\phi)=\widehat{\mathfrak{M}}\left(\int_{\Lambda}\frac{1}{\kappa(\xi)}\left\vert^{\rm H}\mathfrak{D}^{\mu,\nu;\,\psi}_{0+}\phi\right\vert^{\kappa(\xi)}d \xi\right)\,\,\,and\,\,\, \Psi(\phi)=\int_{\Lambda}G(\xi,\phi)d \xi
\end{align} 
where 
\begin{equation*}
    \widehat{\mathfrak{M}}(t)=\int_{0}^{t}\mathfrak{M}(s)ds,~G(\xi,\phi)=\int_{0}^{\phi}\mathfrak{g}(\xi,t)dt.
\end{equation*}}

{\color{blue}The associated energy functional $\mathfrak{E}=\Phi (\phi)-\Psi (\phi):\mathcal{H}^{\mu,\nu;\,\psi}_{\kappa(\xi)}(\Lambda)\rightarrow\mathbb{R}$ to problem (\ref{eq1}) is well} defined. Note that $\mathfrak{E}\in C^{1}\left(\mathcal{H}^{\mu,\nu;\,\psi}_{\kappa(\xi)}(\Lambda),\mathbb{R}\right),$ is a weakly lower semi-continuous and $\phi\in\mathcal{H}^{\mu,\nu;\,\psi}_{\kappa(\xi)}(\Lambda)$ is a weak solution of the problem (\ref{eq1}) if and only if $\phi$ is a critical point of $\mathfrak{E}$. Moreover, yields
\begin{align}\label{novo1}
	\mathfrak{E}'(\phi)v
	&=\mathfrak{M}\left(\int_{\Lambda}\frac{1}{\kappa(\xi)}\left\vert^{\rm H}\mathfrak{D}^{\mu,\nu;\,\psi}_{0+}\phi\right\vert^{\kappa(\xi)}d \xi\right)\int_{\Lambda}\left\vert^{\rm H}\mathfrak{D}^{\mu,\nu;\,\psi}_{0+}\phi\right\vert^{\kappa(\xi)-2}~^{\rm H}\mathfrak{D}^{\mu,\nu;\,\psi}_{0+}\phi~^{\rm H}\mathfrak{D}^{\mu,\nu;\,\psi}_{0+}v d \xi\notag \\
	&\qquad
	-\int_{\Lambda}\mathfrak{g}(\xi,\phi)vd \xi\notag\\
	& =\Phi'(\phi)v -\Psi'(\phi),~~\forall v \in \mathcal{H}^{\mu,\nu;\,\psi}_{\kappa(\xi)}(\Lambda).
\end{align}

\begin{definition} We say that $\mathfrak{E}$ satisfies (PS) condition in $\mathcal{H}^{\mu,\nu;\,\psi}_{\kappa(\xi)}(\Lambda)$ if any sequence $(\phi_{n})\subset \mathcal{H}^{\mu,\nu;\,\psi}_{\kappa(\xi)}(\Lambda)$ such that $\{\mathfrak{E}(\phi_{n})\}$ is bounded and $\mathfrak{E}'(\phi_{n})\rightarrow 0$ as $n\rightarrow+\infty,$ has a convergent subsequence. \end{definition}

\begin{lemma}\label{lm1} If $\mathfrak{M}(t)$ satisfies {\rm($C_{0}$)} and {\rm($C_{1}$)}, $\mathfrak{g}$ satisfies {\rm$(f_{0})$} and ($f_1$), then $\mathfrak{E}$ satisfies {\rm(PS)} condition.
\end{lemma}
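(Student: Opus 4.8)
The plan is to verify the Palais--Smale condition through the classical three-step scheme: show that every (PS) sequence is bounded, pass to a weakly convergent subsequence by reflexivity (Proposition \ref{bana}), and then upgrade weak to strong convergence via the $(S_+)$ property of $\mathcal{L}^{\mu,\nu}$ established in Proposition \ref{p2}. Throughout, let $(\phi_n)$ satisfy $|\mathfrak{E}(\phi_n)|\le C$ and $\|\mathfrak{E}'(\phi_n)\|=\varepsilon_n\to 0$, and abbreviate $A_n=\int_{\Lambda}\frac{1}{\kappa(\xi)}\left\vert^{\rm H}\mathfrak{D}^{\mu,\nu;\,\psi}_{0+}\phi_n\right\vert^{\kappa(\xi)}\,d\xi$.

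For boundedness, the key is to estimate $\mathfrak{E}(\phi_n)-\frac{1}{\theta}\mathfrak{E}'(\phi_n)\phi_n$ from below. The potential terms are handled by the Ambrosetti--Rabinowitz condition $(f_1)$: for $|\phi_n|\ge T$ one has $\frac{1}{\theta}\mathfrak{g}(\xi,\phi_n)\phi_n-G(\xi,\phi_n)\ge 0$, while the contribution of the region $\{|\phi_n|<T\}$ is absorbed into a constant through the growth bound $(f_0)$. The Kirchhoff terms are handled by $(C_1)$ together with the elementary inequality $\int_{\Lambda}\left\vert^{\rm H}\mathfrak{D}^{\mu,\nu;\,\psi}_{0+}\phi_n\right\vert^{\kappa(\xi)}\,d\xi\le \kappa^{+}A_n$, which yields
\[
\widehat{\mathfrak{M}}(A_n)-\frac{1}{\theta}\mathfrak{M}(A_n)\int_{\Lambda}\left\vert^{\rm H}\mathfrak{D}^{\mu,\nu;\,\psi}_{0+}\phi_n\right\vert^{\kappa(\xi)}\,d\xi\ \ge\ \Big[(1-\omega)-\frac{\kappa^{+}}{\theta}\Big]\mathfrak{M}(A_n)A_n.
\]
Since $\theta>\kappa^{+}/(1-\omega)$, the bracket $\delta:=(1-\omega)-\kappa^{+}/\theta$ is strictly positive, and $(C_0)$ gives $\mathfrak{M}(A_n)\ge m_0$, so the right-hand side is at least $\delta m_0 A_n$. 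Assembling these estimates and restricting to the regime $\|\phi_n\|>1$, where the modular inequalities relating $\rho$ and $|\cdot|_{\kappa(\xi)}$ give $A_n\ge (\kappa^{+})^{-1}\|\phi_n\|^{\kappa^{-}}$, I arrive at an inequality of the form $C+\tfrac{\varepsilon_n}{\theta}\|\phi_n\|\ge \delta m_0 (\kappa^{+})^{-1}\|\phi_n\|^{\kappa^{-}}-C'$. As $\kappa^{-}>1$, the right side grows strictly faster than the left, forcing $(\|\phi_n\|)$ to be bounded; by Proposition \ref{bana} a subsequence satisfies $\phi_n\rightharpoonup\phi$ in $\mathcal{H}^{\mu,\nu;\,\psi}_{\kappa(\xi)}(\Lambda)$.

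For strong convergence, I expand $\mathfrak{E}'(\phi_n)(\phi_n-\phi)=\mathfrak{M}(A_n)(\mathcal{L}^{\mu,\nu}(\phi_n),\phi_n-\phi)-\int_{\Lambda}\mathfrak{g}(\xi,\phi_n)(\phi_n-\phi)\,d\xi$; the left side tends to $0$ because $\mathfrak{E}'(\phi_n)\to 0$ and $(\phi_n-\phi)$ is bounded. The compact embedding of Proposition \ref{p1} (available since $\tilde{\zeta}(\xi)<\kappa^{*}_{\mu}(\xi)$) gives $\phi_n\to\phi$ in $\mathscr{L}^{\tilde{\zeta}(\xi)}(\Lambda)$, and combining $(f_0)$ with the Hölder-type inequality of Proposition \ref{p3} shows $\int_{\Lambda}\mathfrak{g}(\xi,\phi_n)(\phi_n-\phi)\,d\xi\to 0$. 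Dividing by $\mathfrak{M}(A_n)\ge m_0>0$ then yields $(\mathcal{L}^{\mu,\nu}(\phi_n),\phi_n-\phi)\to 0$, and adding $(\mathcal{L}^{\mu,\nu}(\phi),\phi_n-\phi)\to 0$ (from $\phi_n\rightharpoonup\phi$) gives $\overline{\lim}_{n\to+\infty}(\mathcal{L}^{\mu,\nu}(\phi_n)-\mathcal{L}^{\mu,\nu}(\phi),\phi_n-\phi)\le 0$. The $(S_+)$ property in Proposition \ref{p2} then delivers $\phi_n\to\phi$ strongly, which is exactly the required convergent subsequence.

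I expect the boundedness estimate to be the main obstacle: because the Kirchhoff coefficient is nonlocal and the exponent is variable, one must work with the modular $A_n$ rather than a single power of the norm, and the hypothesis $\theta>\kappa^{+}/(1-\omega)$ is precisely what guarantees $\delta>0$. Thus the entire argument hinges on correctly pairing the Ambrosetti--Rabinowitz condition $(f_1)$ with the Kirchhoff condition $(C_1)$, after which reflexivity and the $(S_+)$ property make the remaining steps routine.
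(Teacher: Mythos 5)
Your proposal is correct and follows essentially the same route as the paper: boundedness of the (PS) sequence via the lower estimate of $\mathfrak{E}(\phi_n)-\frac{1}{\theta}\mathfrak{E}'(\phi_n)\phi_n$ using $(C_0)$, $(C_1)$, $(f_1)$ and the modular inequality, then weak convergence by reflexivity, vanishing of $\int_{\Lambda}\mathfrak{g}(\xi,\phi_n)(\phi_n-\phi)\,d\xi$ via the compact embedding and H\"older, and finally the $(S_+)$ property of $\mathcal{L}^{\mu,\nu}$ to upgrade to strong convergence. Your write-up is in fact slightly more careful than the paper's at two points (the splitting of $\Lambda$ into $\{|\phi_n|\ge T\}$ and $\{|\phi_n|<T\}$ to justify absorbing the $G$-terms into a constant, and the explicit addition of $(\mathcal{L}^{\mu,\nu}(\phi),\phi_n-\phi)\to 0$ before invoking $(S_+)$), but these are refinements of the identical argument.
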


\begin{proof}{\color{red} Consider the sequence $\left\{\phi_{n}\right\}$ in $\mathcal{H}^{\mu,\nu;\,\psi}_{\kappa(\xi)}(\Lambda)$, such that $|\mathfrak{E}(\phi_{n})|\leq c$ and $\mathfrak{E}'(\phi_{n})\rightarrow 0$. Hence, one has
	\begin{align}
		& c+||\phi_{n}||\geq \mathfrak{E}(\phi_{n})-\frac{1}{\theta}\mathfrak{E}'(\phi_{n})\phi_{n}\nonumber\\
		& =\widehat{\mathfrak{M}}\left(\int_{\Lambda}\frac{1}{\kappa(\xi)}\left\vert^{\rm H}\mathfrak{D}^{\mu,\nu;\,\psi}_{0+}\phi_{n}\right\vert^{\kappa(\xi)}d \xi\right)-\int_{\Lambda}G(\xi,\phi_{n})d \xi\nonumber\\
		& -\frac{1}{\theta}\mathfrak{M}\left(\int_{\Lambda}\frac{1}{\kappa(\xi)}\left\vert^{\rm H}\mathfrak{D}^{\mu,\nu;\,\psi}_{0+}\phi\right\vert^{\kappa(\xi)}d \xi\right)\int_{\Lambda}\left\vert^{\rm H}\mathfrak{D}^{\mu,\nu;\,\psi}_{0+}\phi_{n}\right\vert^{\kappa(\xi)}d \xi + \int_{\Lambda}\frac{1}{\theta}\mathfrak{g}(\xi,\phi_{n})\phi_{n}d \xi\nonumber\\
		& \geq {\color{blue} (1-\omega)\mathfrak{M}\left(\int_{\Lambda}\frac{1}{\kappa(\xi)}\left\vert^{\rm H}\mathfrak{D}^{\mu,\nu;\,\psi}_{0+}\phi_{n}\right\vert^{\kappa(\xi)}d \xi\right) t-\int_{\Lambda}G(\xi,\phi_{n})d \xi} \nonumber\\
		& {\color{blue}-\frac{1}{\theta}\mathfrak{M}\left(\int_{\Lambda}\frac{1}{\kappa(\xi)}\left\vert^{\rm H}\mathfrak{D}^{\mu,\nu;\,\psi}_{0+}\phi\right\vert^{\kappa(\xi)}d \xi\right)\int_{\Lambda}\left\vert^{\rm H}\mathfrak{D}^{\mu,\nu;\,\psi}_{0+}\phi_{n}\right\vert^{\kappa(\xi)}d\xi + \int_{\Lambda} \frac{1}{\theta} \mathfrak{g}(\xi, \phi_n)\phi_n d\xi}\nonumber\\ 
		& \geq\left(\frac{1-\omega}{\kappa^{+}}-\frac{1}{\theta}\right)\mathfrak{M}\left(\int_{\Lambda}\frac{1}{\kappa(\xi)}\left\vert^{\rm H}\mathfrak{D}^{\mu,\nu;\,\psi}_{0+}\phi_{n}\right\vert^{\kappa(\xi)}d \xi\right)\int_{\Lambda}\left\vert^{\rm H}\mathfrak{D}^{\mu,\nu;\,\psi}_{0+}\phi_{n}\right\vert^{\kappa(\xi)}d \xi \nonumber\\
		&\qquad +\int_{\Lambda}\left(\frac{1}{\theta}\mathfrak{g}(\xi,\phi_{n})\phi_{n}-G(\xi,\phi_{n})\right)d \xi\nonumber\\
		& \geq\left(\frac{1-\omega}{\kappa^{+}}-\frac{1}{\theta}\right)m_{0}\int_{\Lambda}\left\vert^{\rm H}\mathfrak{D}^{\mu,\nu;\,\psi}_{0+}\phi_{n}\right\vert^{\kappa(\xi)}d \xi-c \nonumber\\
		& \geq\left(\frac{1-\omega}{\kappa^{+}}-\frac{1}{\theta}\right)m_{0}\left\|\phi_{n}\right\|^{\kappa^{-}}-c.
	\end{align}}

So $\left\{\left\|\phi_{n}\right\|_{\kappa(\xi)}\right\}$ is bounded. We assume that $\phi_{n}\rightharpoonup \phi$ (without loss of generality), {\color{blue}then $\mathfrak{E}'(\phi_{n})(\phi_{n}-\phi)\rightarrow 0$.} {\color{red}Thus, one has
	\begin{align}\label{eq3.10}
	& \mathfrak{E}'(\phi_{n})(\phi_{n}-\phi) \nonumber\\
		&=\mathfrak{M}\left(\int_{\Lambda}\frac{1}{\kappa(\xi)}\left\vert^{\rm H}\mathfrak{D}^{\mu,\nu;\,\psi}_{0+}\phi_{n}\right\vert^{\kappa(\xi)}d \xi\right) \int_{\Lambda}\left\vert^{\rm H}\mathfrak{D}^{\mu,\nu;\,\psi}_{0+}\phi_{n}\right\vert^{\kappa(\xi)-2}~^{\rm H}\mathfrak{D}^{\mu,\nu;\,\psi}_{0+}\phi_{n}~^{\rm H}\mathfrak{D}^{\mu,\nu;\,\psi}_{0+}(\phi_{n}-\phi)d \xi\nonumber\\
		&-\int_{\Lambda}G(\xi,\phi_{n})(\phi_{n}-\phi)\nonumber\\
		& =\mathfrak{M}\left(\int_{\Lambda}\frac{1}{\kappa(\xi)}\left\vert^{\rm H}\mathfrak{D}^{\mu,\nu;\,\psi}_{0+}\phi_{n}\right\vert^{\kappa(\xi)}d \xi\right) \int_{\Lambda}\left\vert^{\rm H}\mathfrak{D}^{\mu,\nu;\,\psi}_{0+}\phi_{n}\right\vert^{\kappa(\xi)-2}~^{\rm H}\mathfrak{D}^{\mu,\nu;\,\psi}_{0+}\phi_{n}\, \left(^{\rm H}\mathfrak{D}^{\mu,\nu;\,\psi}_{0+}\phi_{n}-\,\,^{\rm H}\mathfrak{D}^{\mu,\nu;\,\psi}_{0+}\phi \right)d \xi\nonumber\\
		&-\int_{\Lambda} \mathfrak{g}(\xi,\phi_{n})(\phi_{n}-\phi) d \xi \rightarrow 0.\notag\\
	\end{align}}
	
Using the condition $(f_{0})$, {\bf Proposition \ref{p1}} and {\bf Proposition \ref{p3}} {\color{blue} it follows that} \begin{equation*}
    \int_{\Lambda}\mathfrak{g}(\xi,\phi_{n})(\phi_{n}-\phi)d \xi\rightarrow 0.
\end{equation*}

{\color{red}In this sense, we obtain
\begin{align*}
\mathfrak{M}\int_{\Lambda} \left(\frac{1}{\kappa(\xi)} \left\vert ^H\mathfrak{D}_{0+}^{\mu,\nu;\,\psi} \phi_n \right\vert ^{\kappa(\xi)} d \xi \right) \int_{\Lambda}\left\vert^H\mathfrak{D}_{0+}^{\mu,\nu;\,\psi}\phi_n \right\vert ^{\kappa(\xi)-2}\, ^H\mathfrak{D}_+^{\mu,\nu;\,\psi}  \left(^H\mathfrak{D}_+^{\mu,\nu;\,\psi}\phi_n -\,\, ^H\mathfrak{D}_+^{\mu,\nu;\,\psi}\phi \right) d \xi \to 0.
\end{align*}}

Using the condition $(C_0)$, yields
\begin{align*}	
\int_{\Lambda} \left\vert ^H\mathfrak{D}_{0+}^{\mu,\nu;\,\psi}\right\vert^{\kappa(\xi)-2} {^H\mathfrak{D}_{0+}^{\mu,\nu;\,\psi} }\phi_n \left(^H\mathfrak{D}_+^{\mu,\nu;\,\psi} \phi_n -\,\,^H\mathfrak{D}_+^{\mu,\nu;\,\psi}\phi\right)d \xi \to 0 
\end{align*}

Finally, using {\bf Proposition \ref{p2}}, hence $\phi_n \to \phi$. Therefore, we complete the proof.
\end{proof}

Since $\mathcal{H}^{\mu,\nu;\,\psi}_{\kappa(\xi)}(\Lambda)$ is a reflexive and separable Banach space (see {\bf Proposition \ref{bana}}), there exist $\{\epsilon_j\}\subset \mathcal{H}^{\mu,\nu;\,\psi}_{\kappa(\xi)}(\Lambda)$ and $\{\epsilon _j^*\} \subset \left(\mathcal{H}^{\mu,\nu;\,\psi}_{\kappa(\xi)}(\Lambda)\right)^* $ such that $ \mathcal{H}^{\mu,\nu;\,\psi}_{\kappa(\xi)}(\Lambda)=\overline{\mbox{span}\{\epsilon_j: j=1,2,,...\}},~~ \left(\mathcal{H}^{\mu,\nu;\,\psi}_{\kappa(\xi)}(\Lambda)\right)^*=\overline{\mbox{span}\{\epsilon_j^*:j=1,2,3,...\}}$ and
\begin{align*}
<\epsilon_j,\epsilon_j^*>=
 \begin{cases}
 1~~\text{if}~~ i=j\\
 0~~\text{if}~~ i\neq j
\end{cases}
 \end{align*}
 
{\color{blue}Let's use $\left(\mathcal{H}^{\mu,\nu;\,\psi}_{\kappa(\xi)}(\Lambda)\right)_j =\mbox{span}\{\epsilon_j\}$, $Y_k =\oplus_{j=1}^{k} \left(\mathcal{H}^{\mu,\nu;\,\psi}_{\kappa(\xi)}(\Lambda)\right)_j$ and $Z_k =\overline{\oplus_{j=k}^{+\infty} \left(\mathcal{H}^{\mu,\nu;\,\psi}_{\kappa(\xi)}(\Lambda)\right)_j}$.}

\begin{lemma}{\rm \cite{Fan1}} {\color{blue} If $\mu \in C_+(\bar{\Lambda})$, $\mu(\xi)<\kappa_{\mu}^{*} (\xi)$ for any $\xi\in \bar{\Lambda}$, denote
	\begin{equation}
		\beta_{k} =\sup \{|\phi|_{\mu(\xi)}: \vert|\phi \vert| =1, \phi \in Z_k\}
	\end{equation}
then  $\lim_{k\to +\infty} \beta_k =0$.}
\end{lemma}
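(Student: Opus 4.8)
The plan is to combine the monotonicity of the sequence $\{\beta_k\}$ with the reflexivity of $\mathcal{H}^{\mu,\nu;\,\psi}_{\kappa(\xi)}(\Lambda)$ from \textbf{Proposition \ref{bana}} and the \emph{compactness} of the embedding supplied by \textbf{Proposition \ref{p1}}. First I would observe that each $\beta_k$ is finite, since the continuous part of \textbf{Proposition \ref{p1}} gives $|\phi|_{\mu(\xi)}\leq c\,||\phi||=c$ on the unit sphere; moreover, because $Z_{k+1}\subset Z_k$ the supremum is taken over a shrinking family of unit vectors, so $\{\beta_k\}$ is non-increasing and bounded below by $0$. Hence $\beta_k\to\beta$ for some $\beta\geq 0$, and it suffices to prove $\beta=0$.

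Next I would select almost-extremal vectors: for each $k$ pick $\phi_k\in Z_k$ with $||\phi_k||=1$ and $|\phi_k|_{\mu(\xi)}\geq \beta_k-\tfrac1k$. Since $||\phi_k||=1$, the sequence $\{\phi_k\}$ is bounded in the reflexive space $\mathcal{H}^{\mu,\nu;\,\psi}_{\kappa(\xi)}(\Lambda)$, so after passing to a subsequence I may assume $\phi_k\rightharpoonup \phi$ weakly for some limit $\phi$. The central step is to identify this weak limit as $\phi=0$. For any fixed index $j$, the biorthogonality relation $\langle \epsilon_i,\epsilon_j^*\rangle=\delta_{ij}$ together with $\phi_k\in Z_k=\overline{\bigoplus_{i\geq k}(\mathcal{H}^{\mu,\nu;\,\psi}_{\kappa(\xi)}(\Lambda))_i}$ forces $\langle \phi_k,\epsilon_j^*\rangle=0$ whenever $k>j$. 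Passing to the weak limit yields $\langle \phi,\epsilon_j^*\rangle=0$ for every $j$; as $\{\epsilon_j^*\}$ spans $\left(\mathcal{H}^{\mu,\nu;\,\psi}_{\kappa(\xi)}(\Lambda)\right)^*$ densely, this forces $\phi=0$.

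Finally, since $\mu(\xi)<\kappa^*_\mu(\xi)$, the embedding $\mathcal{H}^{\mu,\nu;\,\psi}_{\kappa(\xi)}(\Lambda)\hookrightarrow \mathscr{L}^{\mu(\xi)}(\Lambda)$ is compact by \textbf{Proposition \ref{p1}}, so the weakly null sequence $\phi_k\rightharpoonup 0$ converges strongly, i.e.\ $|\phi_k|_{\mu(\xi)}\to 0$. Combining this with the extremal choice gives $\beta_k\leq |\phi_k|_{\mu(\xi)}+\tfrac1k$, and letting $k\to+\infty$ yields $\beta\leq 0$, hence $\beta=0$, which is the assertion. I expect the main obstacle to be the rigorous verification that the weak limit vanishes: one must leverage the dual system $\{\epsilon_j^*\}$ and the density of its span to conclude $\phi=0$ from the vanishing of all coordinate functionals. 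Once this is in place, the compactness of the embedding does the essential work of upgrading weak convergence to the norm convergence in $\mathscr{L}^{\mu(\xi)}(\Lambda)$ that the conclusion demands.
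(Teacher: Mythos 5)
Your proposal is correct. Note that the paper itself does not prove this lemma at all: it is quoted verbatim with a citation to \cite{Fan1} (Fan--Zhang), so there is no internal proof to compare against. Your argument is precisely the standard one from that reference (and from Willem's treatment of the Fountain Theorem setting): monotonicity of $\beta_k$ via $Z_{k+1}\subset Z_k$, selection of almost-extremal unit vectors $\phi_k\in Z_k$, weak compactness from reflexivity (\textbf{Proposition \ref{bana}}), identification of the weak limit as $0$ through the biorthogonal functionals $\epsilon_j^*$ and the density of their span in the dual, and finally the compact embedding of \textbf{Proposition \ref{p1}} to upgrade $\phi_k\rightharpoonup 0$ to $|\phi_k|_{\mu(\xi)}\to 0$, which squeezes $\beta_k\to 0$. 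All steps are sound; in particular the delicate point you flagged — that $\langle\phi_k,\epsilon_j^*\rangle=0$ for $k>j$ passes to the closed span $Z_k$ by continuity of $\epsilon_j^*$, and that vanishing against a dense subspace of the dual forces $\phi=0$ — is handled correctly.
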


\begin{lemma}\label{foun}{\rm\cite{Willem}} {\rm (Fountain Theorem)} Assume 
\begin{itemize}
\item [($A_1$)] $X$ is a Banach Space, $\mathfrak{E} \in C^1(X,\mathbb{R})$ is an even functional. 

If for each $k=1,2,...$ there exist $\rho_{k}> r_{k} >0$ such that:

\item [($A_2$)] $\inf_{\phi\in Z_k, \vert|\phi \vert|=r_k} \mathfrak{E}(\phi) \to +\infty$ as $k \to +\infty$.

\item [($A_3$)] {\color{blue}$\max_{\phi\in Y_k, ||\phi||=\rho_{k}} \mathfrak{E}(\phi) \leq 0$.}

\item [($A_4$)] $\mathfrak{E}$ satisfies {\rm(PS)} condition for every $c>0$,
\end{itemize}
then $\mathfrak{E}$ has a sequence of critical values {\color{blue}tending} to $+\infty$.
\end{lemma}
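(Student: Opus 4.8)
The plan is to realize the critical values as a family of symmetric minimax levels and to show that each such level is attained, exploiting that $\mathfrak{E}$ is even (condition $(A_1)$) and satisfies the (PS) condition at every positive level (condition $(A_4)$). The whole scheme is the equivariant analogue of the mountain-pass construction, built on the splitting $X=\overline{\oplus_{j}(\mathcal{H})_j}$ encoded in $Y_k$ and $Z_k$.

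First I would fix, for each $k$, the ball $B_k=\{\phi\in Y_k:\|\phi\|\le\rho_k\}$ and the class of admissible deformations
$$\Gamma_k=\{\gamma\in C(B_k,X):\gamma\text{ is odd and }\gamma|_{\partial B_k}=\mathrm{id}\},$$
and then define the candidate critical values
$$c_k=\inf_{\gamma\in\Gamma_k}\max_{\phi\in B_k}\mathfrak{E}(\gamma(\phi)).$$
The identity belongs to $\Gamma_k$, so the class is nonempty and each $c_k$ is well defined. The point of the odd, boundary-fixing maps is that they keep enough symmetry for a Borsuk--Ulam argument while respecting the geometry forced by $(A_2)$ and $(A_3)$.

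Second, and this is the topological core, I would establish the intersection property: for every $\gamma\in\Gamma_k$ the image $\gamma(B_k)$ must meet the sphere $S_{r_k}\cap Z_k=\{\phi\in Z_k:\|\phi\|=r_k\}$. This follows from a Borsuk--Ulam/genus computation using the complementary decomposition $X=Y_{k-1}\oplus Z_k$: because $\gamma$ is odd and coincides with the identity on $\partial B_k$, a degree argument on the finite-dimensional factor prevents $\gamma(B_k)$ from avoiding $Z_k$ at radius $r_k$. Combining this with $(A_2)$ gives at once
$$c_k\ge\inf_{\phi\in Z_k,\,\|\phi\|=r_k}\mathfrak{E}(\phi)\to+\infty\quad\text{as }k\to+\infty,$$
so the minimax levels are unbounded above.

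Third, I would show that each $c_k$ is genuinely a critical value. Assuming the contrary, condition $(A_4)$ supplies (PS)$_{c_k}$, and the \emph{symmetric} deformation lemma (available precisely because $\mathfrak{E}$ is even) yields an odd homeomorphism $\eta$ carrying $\{\mathfrak{E}\le c_k+\varepsilon\}$ into $\{\mathfrak{E}\le c_k-\varepsilon\}$ while fixing a neighbourhood of $\partial B_k$; the boundary stays fixed because $(A_3)$ gives $\mathfrak{E}\le 0<c_k$ there, so $\eta\circ\gamma$ is again admissible. Composing a near-optimal $\gamma$ with $\eta$ then produces a map in $\Gamma_k$ with maximum below $c_k$, contradicting the definition of $c_k$. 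Hence each $c_k$ is a critical value, and since $c_k\to+\infty$ we obtain the asserted unbounded sequence of critical values. The main obstacle is the intersection step of the second paragraph: verifying rigorously that every odd map in $\Gamma_k$ has image meeting $S_{r_k}\cap Z_k$, where the evenness of $\mathfrak{E}$ is converted into topological information and the finite-dimensional/infinite-dimensional bookkeeping of $X=Y_{k-1}\oplus Z_k$ must be handled with care; the remaining steps are the standard minimax and deformation machinery.
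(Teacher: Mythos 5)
The paper offers no proof of this lemma at all --- it is quoted verbatim from Willem's \emph{Minimax Theorems} --- and your proposal correctly reconstructs the standard argument from that very reference: minimax values over odd, boundary-fixing deformations of balls in $Y_k$, the Borsuk--Ulam intersection of $\gamma(B_k)$ with $\{\phi\in Z_k:\|\phi\|=r_k\}$ via the splitting $X=Y_{k-1}\oplus Z_k$, and the equivariant deformation lemma combined with $(A_4)$ to show the levels $c_k$ are critical. The only cosmetic caveat is that the deformation step requires $c_k>0$, hence applies for $k$ large, which is all that is needed for an unbounded sequence of critical values.
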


\begin{lemma}\label{dual}{\rm\cite{Willem}} {\rm(Dual Fountain Theorem)} Assume $(A_{1})$ is satisfied and there is  $k_0>0$ so that for each $k\geq k_{0}$, there exist $\rho_{k}> \gamma_k >0$ such that 
\begin{itemize}

\item [($B_1$)] $\inf_{\phi\in Z_k,\vert|\phi \vert|=\rho_{k}}\mathfrak{E}(\phi)\geq 0$.

\item [($B_2$)] $b_k =\max_{\phi\in Y_k,\vert|\phi \vert|=r_k} \mathfrak{E}(\phi) <0$.

\item [($B_3$)] $d_k =\inf_{\phi\in Z_k,\vert|\phi \vert|\leq\rho_{k}} \mathfrak{E}(\phi) \to 0$ as $k \to +\infty$.

\item [($B_4$)] $\mathfrak{E}$ satisfies $(PS)_c^*$ condition for {\color{blue}every $c\in [d_{k_0}, 0)$.}
\end{itemize}

Then $\mathfrak{E}$ has a sequence of negative critical values converging to 0. 
\end{lemma}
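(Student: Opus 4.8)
The plan is to run the $\mathbb{Z}_{2}$-equivariant (genus based) minimax scheme that is dual to the one used for the Fountain Theorem, realising the candidate critical values inside finite-dimensional Galerkin subspaces $Y_{n}$ and then passing to the limit $n\to\infty$ through the $(PS)^{*}_{c}$ condition $(B_{4})$. Write $X=\mathcal{H}^{\mu,\nu;\,\psi}_{\kappa(\xi)}(\Lambda)$, recall the topological splitting $X=Y_{k-1}\oplus Z_{k}$ coming from the biorthogonal system $\{\epsilon_{j},\epsilon_{j}^{*}\}$, and set $B_{k}=\{\phi\in Y_{k}:\|\phi\|\le\rho_{k}\}$. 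Since $\mathfrak{E}$ is even by $(A_{1})$ and $\mathfrak{E}(0)=0$, I would attach to each $k\ge k_{0}$ a minimax value $c_{k}$ defined through the class of odd continuous maps on $B_{k}$ that reduce to the identity on $\partial B_{k}$; the symmetry then produces the pairs $\pm v_{k}$ for free once a single critical point is found.

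The heart of the argument is to trap $c_{k}$ in an interval that shrinks to $0$ from below. For the lower bound I would invoke a Borsuk--Ulam/genus intersection lemma: every admissible odd map, being the identity on $\partial B_{k}$ and defined on the $k$-dimensional ball $B_{k}$, must send some point into $Z_{k}\cap\{\|\phi\|\le\rho_{k}\}$ after composing with the projection onto the $(k-1)$-dimensional factor $Y_{k-1}$, so condition $(B_{1})$ together with the definition of $d_{k}$ forces $c_{k}\ge d_{k}$. For strict negativity I would exploit $(B_{2})$: the sphere $\{\phi\in Y_{k}:\|\phi\|=r_{k}\}$, on which $\mathfrak{E}\le b_{k}<0$, supplies an odd comparison map keeping the relevant maximum below $0$, whence $c_{k}<0$. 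Combining the two gives $d_{k}\le c_{k}<0$, and then $(B_{3})$ with the squeeze yields $c_{k}\to 0^{-}$ as $k\to\infty$.

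It remains to show that each $c_{k}$ is an actual critical value of $\mathfrak{E}$. Here I would apply a quantitative deformation lemma inside each $Y_{n}$ to produce, for $n\ge k$, critical points $\phi_{n}^{k}$ of the restriction $\mathfrak{E}|_{Y_{n}}$ at levels $c_{k}^{n}$ with $c_{k}^{n}\to c_{k}$; along this Galerkin sequence $\mathfrak{E}(\phi_{n}^{k})\to c_{k}$ and $(\mathfrak{E}|_{Y_{n}})'(\phi_{n}^{k})\to 0$, so $(B_{4})$ (legitimate since $c_{k}\in[d_{k_{0}},0)$) extracts a subsequence converging to a genuine critical point $v_{k}\in X$ with $\mathfrak{E}(v_{k})=c_{k}<0$, and evenness supplies $-v_{k}$. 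I expect the main obstacle to be exactly this interface between the finite-dimensional approximation and the full space: one must construct the deformation so that it respects the filtration $\{Y_{n}\}$ and is compatible with the odd symmetry, and then check that the Galerkin critical levels really converge to the abstract minimax value $c_{k}$, so that the weaker $(PS)^{*}_{c}$ condition rather than the usual $(PS)_{c}$ suffices. The equivariant intersection estimate behind $c_{k}\ge d_{k}$ is the second delicate point, since it is what ties the abstract value to the quantity $d_{k}$ that $(B_{3})$ controls.
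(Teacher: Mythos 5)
First, a point of reference: the paper offers no proof of this lemma at all --- it is quoted from Willem's book \cite{Willem} (it is the Bartsch--Willem Dual Fountain Theorem) --- so your attempt has to be judged against the standard proof there. Your global architecture does match that proof: realise candidate critical levels through an equivariant minimax inside the Galerkin subspaces $Y_n$, trap them in $[d_k,b_k]\subset[d_{k_0},0)$, use $(B_4)$, which is tailored exactly to sequences $\phi_{n_j}\in Y_{n_j}$ with $(\mathfrak{E}|_{Y_{n_j}})'(\phi_{n_j})\to 0$, to convert finite-dimensional critical points into a genuine critical point of $\mathfrak{E}$, and let $(B_3)$ force the values to tend to $0$ from below.

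However, your central minimax construction is the wrong dualization, and the defect is fatal rather than cosmetic. You define $c_k$ as an inf-max over odd maps $\gamma:B_k\to X$, $B_k=\{\phi\in Y_k:\|\phi\|\le\rho_k\}$, with $\gamma=\mathrm{id}$ on $\partial B_k$ --- precisely the classes of the \emph{primal} Fountain Theorem. But every odd continuous map satisfies $\gamma(0)=-\gamma(0)=0$, and $\mathfrak{E}(0)=0$ for the functional at hand, so $\max_{u\in B_k}\mathfrak{E}(\gamma(u))\ge\mathfrak{E}(\gamma(0))=0$ for \emph{every} admissible $\gamma$. Hence your $c_k\ge 0$ automatically: no ``odd comparison map'' built from the sphere $\{\phi\in Y_k:\|\phi\|=r_k\}$ can push the maximum below zero, the claimed inequality $c_k<0$ is unattainable, and the squeeze $d_k\le c_k<0$ collapses (your lower bound $c_k\ge d_k$ is true but vacuous, since $d_k<0\le c_k$). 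The correct construction exchanges both the roles of the two families of subspaces and the direction of optimisation: for $n\ge k$ one uses odd maps $\gamma$ defined on the finite-dimensional balls $B_k^n=\{u\in Z_k\cap Y_n:\|u\|\le\rho_k\}$, equal to the identity on the relative sphere $\{u\in Z_k\cap Y_n:\|u\|=\rho_k\}$ (where $(B_1)$ gives $\mathfrak{E}\ge 0$), with values in $Y_n$, and sets $c_k^n=\sup_{\gamma}\inf_{u\in B_k^n}\mathfrak{E}(\gamma(u))$ --- a sup-inf, not an inf-max. The Borsuk--Ulam step is then applied to the odd map $u\mapsto Q\gamma(u)$, where $Q$ projects $Y_n$ onto $Z_{k+1}\cap Y_n$ along $Y_k$, on the symmetric neighbourhood $\{u\in B_k^n:\|\gamma(u)\|<r_k\}$: it yields a point where $\gamma(u)\in Y_k$ and $\|\gamma(u)\|=r_k$, so $(B_2)$ forces $c_k^n\le b_k<0$, while $\gamma=\mathrm{id}$ and the definition of $d_k$ give $c_k^n\ge d_k$. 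Only with these two bounds in hand do your remaining steps (equivariant deformation in $Y_n$ to make $c_k^n$ a critical value of $\mathfrak{E}|_{Y_n}$, convergence of the levels, and the $(PS)_c^*$ passage to the limit) go through as you describe.
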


\begin{definition} We say that $\mathfrak{E}$ satisfies this $(PS)_c^*$ condition with respect to $(Y_n)$, if any sequence $\{\phi_{n_j}\}\subset \mathcal{H}^{\mu,\nu;\,\psi}_{\kappa(\xi)}(\Lambda) $ such that $n_j \to +\infty$, $\phi_{n_j}\in Y_n$, $\mathfrak{E}(\phi_{n_J}) \to c$ and $(\mathfrak{E}|_{Y_{n_j}})'(\phi_{n_j})\to 0$, contains a subsequence converging to a critical point of $\mathfrak{E}$.
\end{definition}

\begin{lemma}{\rm\cite{Dai}}\label{lm3} Assume that ${\rm (C_{0})}$, ${\rm (C_{1})}$,  ${\rm (f_0)}$ and ${\rm (f_1)}$ hold, then $\mathfrak{E}$ satisfies the $(PS)_{c}^{*}$ condition.
\end{lemma}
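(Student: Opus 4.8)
The plan is to mirror the proof of the ordinary (PS) condition in Lemma \ref{lm1}, upgrading it to accommodate the restriction to the finite-dimensional subspaces $Y_{n_j}$. First I would establish that any $(PS)_c^*$ sequence $\{\phi_{n_j}\}$, with $\phi_{n_j}\in Y_{n_j}$, $n_j\to+\infty$, $\mathfrak{E}(\phi_{n_j})\to c$ and $(\mathfrak{E}|_{Y_{n_j}})'(\phi_{n_j})\to 0$, is bounded. Since $\phi_{n_j}\in Y_{n_j}$ and $Y_{n_j}$ is a subspace, we have $(\mathfrak{E}|_{Y_{n_j}})'(\phi_{n_j})\phi_{n_j}=\mathfrak{E}'(\phi_{n_j})\phi_{n_j}$, so the exact chain of inequalities from Lemma \ref{lm1} applies verbatim: combining $(C_1)$, the Ambrosetti--Rabinowitz condition $(f_1)$ and $(C_0)$ yields
$$ c+\|\phi_{n_j}\|\;\geq\;\mathfrak{E}(\phi_{n_j})-\frac{1}{\theta}(\mathfrak{E}|_{Y_{n_j}})'(\phi_{n_j})\phi_{n_j}\;\geq\;\left(\frac{1-\omega}{\kappa^{+}}-\frac{1}{\theta}\right)m_{0}\|\phi_{n_j}\|^{\kappa^{-}}-c. $$
Because $\theta>\kappa^{+}/(1-\omega)$ the leading coefficient is strictly positive, and since $\kappa^{-}>1$ this forces $\{\|\phi_{n_j}\|\}$ to be bounded.

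Next, by reflexivity (Proposition \ref{bana}) I would pass to a subsequence with $\phi_{n_j}\rightharpoonup\phi$ in $\mathcal{H}^{\mu,\nu;\,\psi}_{\kappa(\xi)}(\Lambda)$, aiming to promote this to strong convergence through the $(S_+)$ property of $\mathcal{L}^{\mu,\nu}$ (Proposition \ref{p2}). The technical heart is that $(\mathfrak{E}|_{Y_{n_j}})'(\phi_{n_j})$ only tests against directions inside $Y_{n_j}$, whereas the weak limit $\phi$ need not lie in any single $Y_{n_j}$. To bridge this I would choose $v_{n_j}\in Y_{n_j}$ with $v_{n_j}\to\phi$ in norm, which is possible because $\overline{\bigcup_j Y_j}=\mathcal{H}^{\mu,\nu;\,\psi}_{\kappa(\xi)}(\Lambda)$. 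Splitting $\mathfrak{E}'(\phi_{n_j})(\phi_{n_j}-\phi)=\mathfrak{E}'(\phi_{n_j})(\phi_{n_j}-v_{n_j})+\mathfrak{E}'(\phi_{n_j})(v_{n_j}-\phi)$, the first term equals $(\mathfrak{E}|_{Y_{n_j}})'(\phi_{n_j})(\phi_{n_j}-v_{n_j})$ and is bounded by $\|(\mathfrak{E}|_{Y_{n_j}})'(\phi_{n_j})\|\,\|\phi_{n_j}-v_{n_j}\|\to 0$, while the second vanishes because $\mathfrak{E}'$ is bounded on bounded sets and $v_{n_j}-\phi\to 0$. Hence $\mathfrak{E}'(\phi_{n_j})(\phi_{n_j}-\phi)\to 0$.

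From here the argument of Lemma \ref{lm1} takes over. Expanding $\mathfrak{E}'(\phi_{n_j})(\phi_{n_j}-\phi)$ and using $(f_0)$ together with the compact embedding of Proposition \ref{p1} and the H\"older-type inequality of Proposition \ref{p3} to kill the term $\int_\Lambda\mathfrak{g}(\xi,\phi_{n_j})(\phi_{n_j}-\phi)\,d\xi$, then invoking $(C_0)$ to bound $\mathfrak{M}(\cdots)\geq m_0>0$ from below, I would deduce $(\mathcal{L}^{\mu,\nu}(\phi_{n_j}),\phi_{n_j}-\phi)\to 0$; subtracting $(\mathcal{L}^{\mu,\nu}(\phi),\phi_{n_j}-\phi)\to 0$ (weak convergence) and applying the $(S_+)$ property of Proposition \ref{p2} gives $\phi_{n_j}\to\phi$ strongly. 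Finally, to confirm $\phi$ is a genuine critical point of the \emph{unrestricted} functional, I would fix $m$ and $w\in Y_m$: for every $n_j\geq m$ one has $w\in Y_{n_j}$, so $\mathfrak{E}'(\phi_{n_j})w=(\mathfrak{E}|_{Y_{n_j}})'(\phi_{n_j})w\to 0$; since $\phi_{n_j}\to\phi$ and $\mathfrak{E}\in C^1$, the left side converges to $\mathfrak{E}'(\phi)w$, whence $\mathfrak{E}'(\phi)$ annihilates the dense set $\bigcup_m Y_m$ and therefore $\mathfrak{E}'(\phi)=0$. The main obstacle is precisely this reconciliation of the subspace-restricted gradient information with a limit lying outside the subspaces; the density of $\bigcup_j Y_j$ and the norm-approximation $v_{n_j}\to\phi$ are exactly what make the passage go through.
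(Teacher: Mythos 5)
Your proof is correct, but note that the paper itself gives no argument for Lemma \ref{lm3}: the statement is simply attributed to \cite{Dai}, where the analogous result is proved for the $p(x)$-Kirchhoff equation, so there is no in-paper proof to compare against. What you have written is essentially the argument that the citation stands in for, transplanted to the fractional setting: boundedness of the $(PS)_c^*$ sequence by the same $(C_1)$--$(f_1)$--$(C_0)$ computation as in Lemma \ref{lm1} (legitimate because $\phi_{n_j}\in Y_{n_j}$, so the restricted derivative agrees with the full one when tested against $\phi_{n_j}$), density of $\bigcup_j Y_j$ to replace the unavailable test direction $\phi$ by approximants $v_{n_j}\in Y_{n_j}$, the $(S_+)$ property of Proposition \ref{p2} to upgrade weak to strong convergence, and a final density argument showing the limit is a critical point of the unrestricted functional. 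The two points that genuinely go beyond Lemma \ref{lm1} --- the bridging decomposition $\mathfrak{E}'(\phi_{n_j})(\phi_{n_j}-\phi)=\mathfrak{E}'(\phi_{n_j})(\phi_{n_j}-v_{n_j})+\mathfrak{E}'(\phi_{n_j})(v_{n_j}-\phi)$ and the verification $\mathfrak{E}'(\phi)=0$ via $w\in Y_m\subset Y_{n_j}$ --- are handled correctly; in particular you rightly invoke that $\mathfrak{E}'$ maps bounded sets to bounded sets, which indeed follows from the boundedness of $\mathcal{L}^{\mu,\nu}$ (Proposition \ref{p2}), the continuity of $\mathfrak{M}$, and the growth condition $(f_0)$ combined with Proposition \ref{p1} and Proposition \ref{p3}. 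In short, your proposal supplies, correctly and in full, the proof the paper delegates to its reference.
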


\section{Main results}

In this section, we will address the main results of this paper, using variational techniques and results from Sobolev spaces with variable exponents and from the $\psi$-fractional space $\mathcal{H}^{\mu,\nu;\,\psi}_{\kappa(\xi)}(\Lambda)$, as discussed in the previous section.

\begin{proof} {\bf(Proof of Theorem \ref{Teorema3.1})} {\color{blue}Using the inequalities \eqref{ineq1} and \eqref{ineq2}, yields
\begin{align}\label{ineq3}
\left|G(\xi,t)\right|&=\left\vert\int_{0}^{t}\mathfrak{g}(\xi,s)ds\right\vert\leq \int_{0}^{t}|\mathfrak{g}(\xi,s)ds\nonumber\\
&\leq c\int_{0}^{t}(1+|s|^{\widehat{\beta}-1})ds\nonumber\\&=
c\int_{0}^{t}ds+c\int_{0}^{t}|s|^{\widehat{\beta}-1}ds\nonumber\\&\leq  c\left(|t|+|t|^{\widehat{\beta}}\right)
\end{align}
and $\widehat{\mathfrak{M}}(t)\geq m_{0} t$. In this sense follows of (\ref{ineq3})
\begin{align*}
		\mathfrak{E}(\phi)&=\widehat{\mathfrak{M}} \left(\int_{\Lambda}\frac{1}{\kappa(\xi)}\left\vert^{\rm H}\mathfrak{D}^{\mu,\nu;\,\psi}_{0+}\phi\right\vert^{\kappa(\xi)}d \xi\right)-\int_{\Lambda}G(\xi,\phi)d \xi\\
		&\geq m_{0}\int_{\Lambda}\frac{1}{\kappa(\xi)}\left\vert^{\rm H}\mathfrak{D}^{\mu,\nu;\,\psi}_{0+}\phi\right\vert^{\kappa(\xi)}d \xi-c\int_{\Lambda}|\phi|^{\widehat{\beta}}d \xi-c\int_{\Lambda}|\phi|d \xi\\
		&\geq\frac{m_{0}}{\kappa^{+}}||\phi||^{\kappa^{-}}-c\left\|\phi\right\|^{\widehat{\beta}}-c\left\|\phi\right\|\rightarrow +\infty,
\end{align*}
as $||\phi||\rightarrow +\infty$. Since $\mathfrak{E}$ is weakly lower semi-continuous, $\mathfrak{E}$ has a minimum point $\phi$ in $\mathcal{H}^{\mu,\nu;\,\psi}_{\kappa(\xi)}(\Lambda),$ and $\phi$ is a weak solution of problem \eqref{eq1}.}
\end{proof}

{\color{red}\begin{proof} {\bf (Proof of Theorem \ref{Teorema3.2})} Using {\bf Lemma \ref{lm1}}, $\mathfrak{E}$ satisfies (PS) condition in $\mathcal{H}^{\mu,\nu;\,\psi}_{\kappa(\xi)}(\Lambda)$. Since $ \kappa^{+} < \zeta^{-} \leq 
	\zeta(\xi) < \kappa_{\mu}^{*}(t)$, $\mathcal{H}^{\mu,\nu;\,\psi}_{\kappa(\xi)}(\Lambda) \hookrightarrow 
	\mathscr{L}^{\kappa^{+}}(\Lambda)$ then there exist $c>0$ such that
	\begin{equation}\label{eq3.11}
		|\phi|_{\kappa^{+}} \leq c \lVert \phi \rVert, \phi\in \mathcal{H}^{\mu,\nu;\,\psi}_{\kappa(\xi)}(\Lambda).
	\end{equation}
	
Let $\epsilon >0$ such that $\epsilon c^{\kappa^{+}} \leq \dfrac{m_0}{2\kappa^{+}}$. From the conditions $(f_0)$ and $(f_2)$ , yields	
\begin{equation}\label{ineq4}
		G(\xi,t) \leq \epsilon |t|^{\kappa^{+}} + c|t|^{\zeta(\xi)} ,(\xi,t)\in \Lambda \times \mathbb{R}.
\end{equation}

Using the condition $(C_{0})$ and the inequality (\ref{ineq4}), yields
	\begin{align}
		\mathfrak{E}(\phi) 
		&\geq \frac{m_0}{\kappa^{+}} \int_{\Lambda} \left\vert^H \mathfrak{D}_{0+}^{\mu ,\beta, \psi} \phi\right\vert^{\kappa(\xi)} d \xi - \epsilon \int_{\Lambda} |\phi|^{\kappa^{+}} d \xi - c \int_{\Lambda} |\phi|^{\zeta(\xi)} d \xi \nonumber\\
		&\geq \frac{m_0}{\kappa^{+}} \vert\vert \phi \vert\vert^{\kappa^{+}} - \epsilon c^{\kappa+} \vert\vert \phi \vert\vert^{\kappa^{+}} - c\vert\vert \phi \vert\vert^{\zeta^{-}}\nonumber\\
		& \geq \frac{m_0}{2\kappa^{+}} \vert\vert \phi \vert\vert^{\zeta^{+}}-c\vert\vert \phi \vert\vert^{\zeta^{-}},
		&
	\end{align}
when $||\phi||\leq 1$.
	
Therefore, there exists $r>0 ,\delta >0$ such that, $\mathfrak{E}(\phi)\geq\delta>0$ for every $\vert\vert \phi \vert\vert =r$. From $(f_1)$, it follows that 
	\begin{align*}
		G(\xi,t) \geq c \vert t\vert ^{\theta}, ~\xi \in \Lambda , ~|t| \geq T.
	\end{align*} 
{\color{blue}
Consider the conditions {\rm($C_{0}$)} and {\rm($C_{1}$)}. Note that the function $g(t)=\dfrac{\widehat{\mathfrak{M}}(t)}{t^{1/w-1}}$ is decreasing. So for all $t_{0}> 0$, when $t>t_0$, yields
\begin{eqnarray*}
    g(t)\leq g(t_0).
\end{eqnarray*}
In this sense, from $\dfrac{\widehat{\mathfrak{M}}(t)}{t^{1/w-1}}\leq \dfrac{\widehat{\mathfrak{M}}(t_0)}{t^{1/w-1}}$, it follows that $\ln (\widehat{\mathfrak{M}}(t))\leq \ln (\widehat{\mathfrak{M}}(t_0))-\dfrac{1}{1-w}\ln t-\dfrac{1}{1-w}\ln t_0$. Therefore, one has
\begin{equation}
		\widehat{\mathfrak{M}}(t) \leq \dfrac{\widehat{\mathfrak{M}}(t_0)}{t_{0}^{1/1-w}} t^{1/1-\omega} \leq ct^{\frac{1}{1-\omega}},\,\, for\,\, t>t_{0}
	\end{equation}
where $t_0>0$ (constant).} For $w \in \mathcal{H}^{\mu,\nu;\,\psi}_{\kappa(\xi)}(\Lambda)-\{0\}$ and $t>1$, yields
	\begin{align*}
		\mathfrak{E}(t w)
		&= \widehat{\mathfrak{M}} \left(\int_{\Lambda} \frac{1}{\kappa(\xi)}\left\vert t\, ^H\mathfrak{D}_{0+}^{\mu,\nu;\,\psi} w \right\vert ^{\kappa(\xi)} d \xi\right) - \int _{\Lambda} G(x,tw)d \xi\\
		& \leq ct^{\frac{\kappa^{+}}{1-\omega}} \left(\int_{\Lambda} \frac{1}{\kappa(\xi)}\left\vert  ^H\mathfrak{D}_{0+}^{\mu,\nu;\,\psi} w \right\vert ^{\kappa(\xi)} d \xi\right) ^{\frac{1}{1-\omega}} - ct^{\theta} \int_{\Lambda} |w|^{\theta} d \xi -c\\
		&	\to -\infty ~~\mbox{as}~~ t \to +\infty.
	\end{align*}
due to $\theta > \dfrac{\kappa^{+}}{1-\omega}$. Since $\mathfrak{E}(0)=0$, $\mathfrak{E}$ satisfies the conditions of the Mountain Pass Theorem. So $\mathfrak{E}$ admits at least one nontrivial critical point.
\end{proof}}

Now, we will use the {\bf Lemma \ref{foun}} (Fountain Theorem) and {\bf Lemma \ref{dual}} (Dual Fountain Theorem) to prove {\color{blue}{\bf Theorem \ref{Teorema3.3}} and {\bf Theorem \ref{Teorema3.4}},} respectively.

\begin{proof} {\bf (Proof of Theorem \ref{Teorema3.3})} Note that $\mathfrak{E}$ is an even function and satisfies (PS) {\color{blue}condition }(see condition $(f_3)$ and {\bf Lemma \ref{lm1}}). Purpose here is proof that there is $\rho_{k} >\gamma _k >0$ ($k$ large) such that ({\rm $A_2$}) and (${\rm A_3}$) hold and, so use {\bf Lemma \ref{foun}} {\color{blue}(Fountain Theorem)}.

{\color{red}
{\rm ($A_2$)} For any $\phi \in Z_k$, $\eta\in\Lambda$, $\vert|\phi \vert|$ = $\gamma_k$ = $\left(c\zeta^{+}\beta_{k}^{\zeta^+}m_{0}^{-1}\right)^{\frac{1}{\kappa^{-}-\zeta^+}}$, it follows that
\begin{align*}
\mathfrak{E}(\phi)
&= \widehat{\mathfrak{M}} \left(\int_{\Lambda} \frac{1}{\kappa(\xi)}\left\vert ^{\rm H}\mathfrak{D}_{0+}^{\mu,\nu;\,\psi} \phi \right\vert ^{\kappa(\xi)} d \xi\right) -\int_{\Lambda} \mathfrak{g}(\xi,\phi)d \xi\\
& \geq \frac{m_0}{\kappa^{+}} \int_{\Lambda} \frac{1}{\kappa(\xi)}\left\vert ^{\rm H}\mathfrak{D}_{0+}^{\mu,\nu;\,\psi} \phi \right\vert ^{\kappa(\xi)} d \xi - c\int_{\Lambda} |\phi|^{\zeta(\xi)}d \xi -c\int_{\Lambda} |\phi|d \xi \\
& \geq \frac{m_0}{\kappa^{+}} \vert|\phi \vert|^{\kappa^{-}} - c \vert|\phi \vert|^{\zeta(\eta)} -c \vert|\phi \vert|\\
& \geq \vert|\phi \vert|^{\kappa^{-}} -c\beta_k^{\zeta^+} \vert|\phi \vert|^{\kappa^{+}} -c\vert|\phi \vert|-c \\
&= m_0 \left(\frac{1}{\kappa^{+}}-\frac{1}{\zeta^+}\right) \left(c\zeta^+\beta_\kappa^{\zeta^+}m_0^{-1}\right)^{\frac{\kappa^{+}}{\kappa^{+} -\zeta^+}} -c\left(c\zeta^+\beta_\kappa^{\zeta^+}m_0^{-1}\right)^{\frac{1}{\kappa^{-} -\zeta^+}} -c \to +\infty 
\end{align*}
as $\kappa\rightarrow+\infty$ and with $\kappa^{+} <\zeta^+, \kappa^{-} >1 $ and $\beta_{k} \to 0$.}

{\color{red}{\rm ($A_3$)} Using ${\rm(f_1)}$, we have $G(\xi,t) \geq c |t|^{\theta} -c$. Therefore, for any $w\in Y_k$ with $\vert|w \vert|= 1$ and $1< t =\rho_{k}$, yields
\begin{align}
\mathfrak{E}(t w)
&= \widehat{\mathfrak{M}} \left(\int_{\Lambda} \frac{1}{\kappa(\xi)}\left\vert t\,\, ^H\mathfrak{D}_{0+}^{\mu,\nu;\,\psi} w \right\vert ^{\kappa(\xi)} d \xi\right) - \int _{\Lambda} G(\xi,t w)d \xi\nonumber\\
& \leq c\rho_{k}^{\frac{\kappa^{+}}{1-\omega}} \left(\int_{\Lambda} \left\vert ^H\mathfrak{D}_{0+}^{\mu,\nu;\,\psi} \phi \right\vert ^{\kappa(\xi)} d \xi\right) ^{\frac{1}{1-\omega}} - c\rho_{k}^{\theta} \int_{\Lambda} |w|^{\theta}d \xi -c.
&
\end{align}}

Note that, since $\theta > \dfrac{\kappa^{+}}{1-\omega}$ and dim $Y_k = k$ holds, $\mathfrak{E}(\phi) \to -\infty$ as $\vert|\phi \vert| \to +\infty$ for $\phi \in Y_k$. In this sense, using the {\bf Lemma \ref{dual}} (Fountain theorem), we concluded the proof of Theorem.
\end{proof}

\begin{proof}{\bf (Proof of Theorem \ref{Teorema3.4})} First, note that, using condition $(f_3)$ and {\bf Lemma \ref{lm3}}, it follows that $\mathfrak{E}$ satisfies the conditions $(A_{1})$ and $(B_{4})$ (see {\bf Lemma \ref{dual}}-Dual Fountain Theorem).

{\color{red}
\begin{itemize}
\item [($B_1$)]  For any $v\in Z_k$, $\vert|v\vert|=1$, and $0<t<1$, yields
\end{itemize}}{\color{red}
\begin{align}\label{ineq5}
\mathfrak{E}(tv)
&= \widehat{\mathfrak{M}} \left(\int_{\Lambda} \frac{1}{\kappa(\xi)}\left\vert  ^H\mathfrak{D}_{0+}^{\mu,\nu;\,\psi} (tv) \right\vert ^{\kappa(\xi)} d \xi\right) - \int _{\Lambda} G(\xi,tv)d \xi\nonumber\\
& \geq \frac{m_0}{\kappa^{+}} t^{\kappa^{+}} \int_{\Lambda} \left\vert ^H\mathfrak{D}_{0+}^{\mu,\nu;\,\psi} v \right\vert ^{\kappa(\xi)} d \xi -\epsilon t^{\kappa^{+}} \int_{\Lambda} |v|^{\kappa^{+}} d \xi -c t^{\zeta^-} \int_{\Lambda} |v|^{\zeta(\xi)} d \xi\nonumber\\
& \geq  \frac{m_0}{2\kappa^{+}} t^{\kappa^{+}} 
- \begin{cases}
 c ~{\beta_{k}^{\zeta^-}~t^{\zeta^{-}}}~\text{if}~~ \|\phi\|_{\zeta(\xi)\leq 1}\\
 c ~{\beta_{k}^{\zeta^+}~t^{\zeta^{-}}}~\text{if}~~ \|\phi\|_{\zeta(\xi)> 1}.
\end{cases}
\end{align}}

{\color{red}
Since $\zeta^- > \kappa^{+}$, without loss of generality taking $\rho_{k} =t$ with $k$ (sufficiently large), for $v \in Z_k$ with $\vert| v\vert|= 1$, holds $\mathfrak{E}(tv) \geq 0$. In that sense, we have $\mathop{\inf }_{\begin{subarray}{c}\phi\in Z_k,\vert|\phi \vert|=\rho_{k} \end{subarray}} \mathfrak{E}(\phi) \geq 0$ for $k$ sufficiently large. Thus, the condition ($B_1$) is satisfied.}

\begin{itemize}
\item [($B_2$)] For $v \in Y_k$, $\|v\|=1$ and $0<t<\rho_{k}<1$, yields
\end{itemize}{\color{red}
\begin{align*}
\mathfrak{E}(tv)& = \widehat{\mathfrak{M}} \left( \int_{\Lambda} \frac{1}{\kappa(\xi)}\left\vert t~ ^H\mathfrak{D} _{0+}^{\mu,\nu;\,\psi} v\right\vert^{\kappa(\xi)} d \xi \right) -\int_{\Lambda}G(\xi,tv) d \xi \\
& \leq c \left( \int_{\Lambda} \left\vert t~ ^H\mathfrak{D}_{0+}^{\mu,\nu;\,\psi} v\right\vert^{\kappa(\xi)} d \xi \right)^{\frac{1}{1-\omega}} -c \int_{\Lambda}\vert tv\vert ^{\gamma^{(\xi)}} d \xi \\
& \leq c t^{\frac{\kappa^{-}}{1-\omega}} \left( \int_{\Lambda} \left\vert ^H\mathfrak{D}_{0+}^{\mu,\nu;\,\psi} v\right\vert^{\kappa(\xi)} d \xi \right)^{\frac{1}{1-\omega}} -c t^{\gamma^+} \int_{\Lambda} \vert v\vert ^{\gamma(\xi)} d \xi.
	\end{align*}}

Using the fact $\gamma^+ <\dfrac{\kappa^{-}}{1-\omega}$, there exists a $r_k \in (0,\rho_{k})$ such that $\mathfrak{E}(tv)<0$ when $t=r_{k}$. So, we obtain
\begin{equation*}
b_k:=\mathop{\max }_{\begin{subarray}{c}\phi\in Y_k,\\ \vert|\phi \vert|=r_k \end{subarray}} \mathfrak{E}(\phi) < 0.
\end{equation*}
Thus, the condition $(B_{2})$ is satisfied.

{\rm($B_3$)} Using the fact that $Y_k \cap Z_k \neq \emptyset$ and $r_k <\rho_{k}$, one has
\begin{equation*}
d_k:=\mathop{\inf }_{\begin{subarray}{c}\phi\in Z_k,\\ \vert|\phi \vert|\leq \rho_{k} \end{subarray}} \mathfrak{E}(\phi) \leq b_k:=\mathop{\max }_{\begin{subarray}{c}\phi\in Y_k,\\ \vert|\phi \vert|=r_k \end{subarray}} \mathfrak{E}(\phi) <0.
\end{equation*}

Using the inequality \eqref{ineq5}, for $v\in Z_k$, $\vert|v\vert| =1$, $0\leq t\leq \rho_{k}$ and $\phi=tv$, yields
\begin{align*}
\mathfrak{E}(\phi)=\mathfrak{E}(tv) \geq -
\begin{cases}
 c ~{\beta_{k}^{\zeta^-}~t^{\zeta^{-}}}~\text{if}~~ \|\phi\|_{\zeta(\xi)}\leq 1\\
 c ~{\beta_{k}^{\zeta^+}~t^{\zeta^{-}}}~\text{if}~~ \|\phi\|_{\zeta(\xi)}> 1
\end{cases}
\end{align*}
hence $d_k \to 0$ i.e. ($B_3$) is satisfied. Therefore, by means of {\bf Theorem \ref{Teorema3.4}}, we conclude the proof.
\end{proof} 

 \section{A special problem and comments}

{\color{red}The following idea is to discuss some consequences of {\bf Theorem \ref{Teorema3.1} - Theorem \ref{Teorema3.4}}. Consider the following fractional problem}
\begin{align}\label{eq11}
\left(a+b\int_{\Lambda}\frac{1}{\kappa(\xi)} \left\vert^{\rm H}\mathfrak{D}^{\mu,\nu;\,\psi}_{0+}\phi\right\vert^{\kappa(\xi)} d \xi\right){\bf L}^{\mu,\nu;\,\psi}_{\kappa(\xi)}\phi&=\mathfrak{g}(\xi,\phi),\,\, in\,\,\Lambda =[0,T]\times [0,T]\notag\\
\phi&=0
\end{align} 
where $${\bf L}^{\mu,\nu;\,\psi}_{\kappa(\xi)}\phi=\,\,^{\rm H}\mathfrak{D}^{\mu,\nu;\,\psi}_{T}\left(\left\vert^{\rm H}\mathfrak{D}^{\mu,\nu;\,\psi}_{0+}\phi\right\vert^{\kappa(\xi)-2}~{^{\rm H}\mathfrak{D}^{\mu,\nu;\,\psi}_{0+}}\phi\right)$$ and $a,b$ are two positive constants.

Let $\mathfrak{M}(t)=a+bt$ with $t=\displaystyle\int_{\Lambda}\frac{1}{\kappa(\xi)} \left\vert^{\rm H}\mathfrak{D}^{\mu,\nu;\,\psi}_{0+}\phi\right\vert^{\kappa(\xi)}d \xi$. Note that, $\mathfrak{M}(t)\geq a>0$ and taking $\omega=\dfrac{4}{5}$, yields
\begin{equation*}
  \widehat{\mathfrak{M}}(t)=\int_{0}^{t} \mathfrak{M}(s)ds={\color{blue}at+\frac{b}{2}t^{2}\geq \frac{
  1}{2}\left(a+bt \right)t \geq \frac{
  1}{5}\left(a+bt \right)t}=(1-\omega) \mathfrak{M}(t)t.
\end{equation*}

Therefore, the conditions $(C_{0})$ and $(C_{1})$ are satisfied. In this sense, as a consequence of {\bf Theorem \ref{Teorema3.1}, {\color{red}Theorem \ref{Teorema3.2}, Theorem \ref{Teorema3.3}} and {\bf Theorem \ref{Teorema3.4}}, we have the following corollaries, namely:}

\begin{corollary} If $\mathfrak{M}$ satisfies $(C_{0})$ and $|\mathfrak{g}(\xi,t)|\leq \mathcal{A}_{1}+\mathcal{A}_{2} |t|^{\beta-1}$, where $1\leq \beta <\kappa^{-}$, then problem {\rm(\ref{eq11})} has a weak solution.
\end{corollary}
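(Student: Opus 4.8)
The plan is to realize problem \eqref{eq11} as the special case of \eqref{eq1} corresponding to the affine choice $\mathfrak{M}(t)=a+bt$, and then to invoke \textbf{Theorem \ref{Teorema3.1}} directly; since the entire content of the corollary is subsumed by that theorem, the argument reduces to verifying its two hypotheses for this particular $\mathfrak{M}$ and $\mathfrak{g}$.

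First I would check $(C_0)$. Because $a,b>0$, we have $\mathfrak{M}(t)=a+bt\geq a>0$ for all $t\geq 0$, so $(C_0)$ holds with $m_0=a$; this was already recorded in the discussion immediately preceding the corollary, where $\mathfrak{M}(t)=a+bt$ was shown to satisfy both $(C_0)$ and $(C_1)$.

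Next I would recast the growth assumption into the exact form \eqref{ineq2} demanded by \textbf{Theorem \ref{Teorema3.1}}. Setting $\widehat{\beta}=\beta$ and $c=\max\{\mathcal{A}_1,\mathcal{A}_2\}$, the hypothesis $|\mathfrak{g}(\xi,t)|\leq \mathcal{A}_1+\mathcal{A}_2|t|^{\beta-1}$ is dominated by $c\bigl(1+|t|^{\widehat{\beta}-1}\bigr)$, which is precisely \eqref{ineq2} with the admissible exponent range $1\leq\widehat{\beta}<\kappa^{-}$ inherited from the assumption $1\leq\beta<\kappa^{-}$.

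With $(C_0)$ and the growth condition \eqref{ineq2} both in force, \textbf{Theorem \ref{Teorema3.1}} applies verbatim and yields a weak solution of \eqref{eq11}. I do not anticipate any genuine obstacle here: the sole point requiring verification is the elementary majorization of the two-constant bound $\mathcal{A}_1+\mathcal{A}_2|t|^{\beta-1}$ by the single-constant bound $c(1+|t|^{\widehat{\beta}-1})$, after which the conclusion follows at once from the general existence theorem.
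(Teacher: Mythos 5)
Your proposal is correct and follows exactly the paper's route: the paper likewise verifies $(C_0)$ for $\mathfrak{M}(t)=a+bt$ (indeed $\mathfrak{M}(t)\geq a>0$) in the discussion preceding the corollary, identifies problem (\ref{eq11}) as the special case of (\ref{eq1}), and obtains the conclusion as an immediate consequence of Theorem \ref{Teorema3.1}. Your explicit majorization $\mathcal{A}_{1}+\mathcal{A}_{2}|t|^{\beta-1}\leq c\left(1+|t|^{\beta-1}\right)$ with $c=\max\{\mathcal{A}_{1},\mathcal{A}_{2}\}$ is the only bookkeeping step, and it is handled correctly.
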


\begin{corollary} If $\mathfrak{M}$ satisfies $(C_{0})$ and $(C_{1})$, and $f$ satisfies $(f_{0})$, $(f_{1})$ and $(f_{2})$, where $\zeta^{-}>\kappa^{+}$, then problem {\rm(\ref{eq11})} has a nontrivial solution.
\end{corollary}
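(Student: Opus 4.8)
The plan is to recognize that problem (\ref{eq11}) is precisely the special instance of the general problem (\ref{eq1}) obtained by taking the Kirchhoff function $\mathfrak{M}(t) = a + bt$, so that the corollary reduces to checking the structural hypotheses on this particular $\mathfrak{M}$ and then invoking \textbf{Theorem \ref{Teorema3.2}}. Accordingly, I would first set $\mathfrak{M}(t) = a + bt$ with $t = \int_{\Lambda}\frac{1}{\kappa(\xi)}\bigl|{}^{\rm H}\mathfrak{D}^{\mu,\nu;\,\psi}_{0+}\phi\bigr|^{\kappa(\xi)}\,d\xi$, and observe that $\mathfrak{M}$ is continuous with $\mathfrak{M}(t) \geq a > 0$ for every $t \geq 0$; this immediately yields $(C_0)$ with $m_0 = a$.

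Next I would verify $(C_1)$. Computing $\widehat{\mathfrak{M}}(t) = \int_0^t \mathfrak{M}(s)\,ds = at + \tfrac{b}{2}t^2$ and choosing $\omega = \tfrac{4}{5}$ (so that $1-\omega = \tfrac{1}{5}$), the elementary estimate $at + \tfrac{b}{2}t^2 \geq \tfrac{1}{2}(a+bt)t \geq \tfrac{1}{5}(a+bt)t$ holds for all $t \geq 0$, which is exactly $\widehat{\mathfrak{M}}(t) \geq (1-\omega)\mathfrak{M}(t)t$. Hence $(C_1)$ is satisfied with this $\omega$. This is the same verification already carried out in the text immediately preceding the corollaries, so it can simply be cited.

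With $(C_0)$ and $(C_1)$ established for $\mathfrak{M}(t) = a + bt$, and with $(f_0)$, $(f_1)$, $(f_2)$ (under the assumption $\zeta^- > \kappa^+$) granted by hypothesis, all the assumptions of \textbf{Theorem \ref{Teorema3.2}} hold for problem (\ref{eq11}). Applying that theorem directly produces a nontrivial weak solution, which completes the argument. The only point requiring genuine verification is the pair of inequalities for the affine Kirchhoff function, and since these are elementary there is no substantive obstacle: the corollary is a straightforward specialization of \textbf{Theorem \ref{Teorema3.2}}.
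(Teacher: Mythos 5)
Your proposal is correct and follows exactly the paper's own route: the paper verifies $(C_0)$ with $m_0=a$ and $(C_1)$ with $\omega=\tfrac{4}{5}$ via the same estimate $at+\tfrac{b}{2}t^{2}\geq\tfrac{1}{2}(a+bt)t\geq\tfrac{1}{5}(a+bt)t$, and then obtains the corollary as an immediate consequence of \textbf{Theorem \ref{Teorema3.2}}. Nothing is missing from your argument.
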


\begin{corollary} Assume that the conditions $(C_{0})$, $(C_{1})$, $(f_{0})$, $(f_{1})$ and $(f_{3})$ hold. Then, problem {\rm(\ref{eq11})} has a sequence of solutions $\left\{\pm \phi_{k} \right\}_{k=1}^{+\infty}$ such that $\mathfrak{E}(\pm \phi_{k})\rightarrow +\infty$ as $k\rightarrow +\infty$.
\end{corollary}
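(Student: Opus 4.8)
The plan is to recognize problem (\ref{eq11}) as the special instance of problem (\ref{eq1}) obtained by choosing the affine Kirchhoff function $\mathfrak{M}(t)=a+bt$, and then to invoke \textbf{Theorem \ref{Teorema3.3}} directly. In this way the whole argument reduces to checking that this particular $\mathfrak{M}$ meets the structural hypotheses $(C_0)$ and $(C_1)$, since the nonlinearity hypotheses $(f_0)$, $(f_1)$ and $(f_3)$ are already assumed in the statement of the corollary.

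First I would verify $(C_0)$: because $a,b>0$ and the argument $t=\int_{\Lambda}\frac{1}{\kappa(\xi)}|{}^{\rm H}\mathfrak{D}^{\mu,\nu;\,\psi}_{0+}\phi|^{\kappa(\xi)}\,d\xi$ is nonnegative, we have $\mathfrak{M}(t)=a+bt\geq a>0$, so $(C_0)$ holds with $m_0=a$. Next I would verify $(C_1)$ with the choice $\omega=\tfrac{4}{5}$ fixed in the preceding discussion: computing $\widehat{\mathfrak{M}}(t)=\int_0^t(a+bs)\,ds=at+\tfrac{b}{2}t^2$ and comparing termwise, from $at\geq\tfrac{1}{5}at$ and $\tfrac{b}{2}t^2\geq\tfrac{1}{5}bt^2$ for $t\geq 0$ one obtains $\widehat{\mathfrak{M}}(t)\geq\tfrac{1}{5}(a+bt)t=(1-\omega)\mathfrak{M}(t)t$, which is precisely $(C_1)$.

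With $(C_0)$, $(C_1)$, $(f_0)$, $(f_1)$ and $(f_3)$ all in force, every hypothesis of \textbf{Theorem \ref{Teorema3.3}} is satisfied for problem (\ref{eq11}). Applying that theorem then yields a sequence $\{\pm\phi_k\}_{k=1}^{+\infty}$ of weak solutions with $\mathfrak{E}(\pm\phi_k)\to+\infty$ as $k\to+\infty$, which is the assertion. I expect no genuine obstacle here: the substantive content — the even symmetry of $\mathfrak{E}$, the Fountain Theorem geometry $(A_2)$–$(A_3)$, and the (PS) condition furnished by \textbf{Lemma \ref{lm1}} — has already been carried out in the proof of \textbf{Theorem \ref{Teorema3.3}}, so the only point demanding attention is the elementary termwise comparison establishing $(C_1)$ for the affine $\mathfrak{M}$, which is immediate once $\omega=\tfrac{4}{5}$ is fixed.
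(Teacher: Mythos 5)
Your proposal is correct and follows essentially the same route as the paper: both verify $(C_0)$ with $m_0=a$ and $(C_1)$ with $\omega=\tfrac{4}{5}$ for $\mathfrak{M}(t)=a+bt$ (the paper passes through the intermediate bound $\widehat{\mathfrak{M}}(t)\geq\tfrac12(a+bt)t$ while you compare termwise, a negligible difference), and then invoke Theorem \ref{Teorema3.3} for problem (\ref{eq11}).
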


\begin{corollary} Assume that the conditions $(C_{0})$, $(C_{1})$, $(f_{0})$, $(f_{1})$, $(f_{3})$ and $(f_{4})$ hold. Then, problem {\rm(\ref{eq11})} has a sequence of solutions $\left\{\pm \phi_{k} \right\}_{k=1}^{+\infty}$ such that $\mathfrak{E}(\pm \phi_{k})\rightarrow +\infty$ as $k\rightarrow +\infty$.
\end{corollary}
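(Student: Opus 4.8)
The plan is to recognize problem \eqref{eq11} as the special instance of \eqref{eq1} obtained by choosing the Kirchhoff function $\mathfrak{M}(t)=a+bt$, and then to invoke the general multiplicity result already proved. First I would check that this affine $\mathfrak{M}$ meets the two structural hypotheses on the Kirchhoff term. Since $a,b>0$, we have $\mathfrak{M}(t)=a+bt\geq a>0$, so $(C_0)$ holds with $m_0=a$. As computed in the discussion immediately preceding the corollary, $\widehat{\mathfrak{M}}(t)=at+\tfrac{b}{2}t^2\geq\tfrac15(a+bt)t=(1-\omega)\mathfrak{M}(t)t$ for the choice $\omega=\tfrac45\in(0,1)$, so $(C_1)$ is satisfied as well. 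Thus both $(C_0)$ and $(C_1)$ come for free from the explicit form of $\mathfrak{M}$, and this is the only place where the concrete Kirchhoff term enters.

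Next I would observe that the remaining hypotheses of the corollary, namely $(f_0)$, $(f_1)$ and the oddness condition $(f_3)$, are exactly the assumptions on the nonlinearity required in \textbf{Theorem \ref{Teorema3.3}}. Consequently the even energy functional $\mathfrak{E}$ associated to \eqref{eq11} satisfies the $\mathrm{(PS)}$ condition by \textbf{Lemma \ref{lm1}}, and it fulfils the geometric conditions $(A_2)$ and $(A_3)$ of the Fountain Theorem, precisely as verified in the proof of \textbf{Theorem \ref{Teorema3.3}}; recall that $(A_2)$ uses the compact embedding of \textbf{Proposition \ref{p1}} together with $\beta_k\to0$, while $(A_3)$ uses the Ambrosetti--Rabinowitz growth coming from $(f_1)$ and $\theta>\tfrac{\kappa^{+}}{1-\omega}$.

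Applying the Fountain Theorem (\textbf{Lemma \ref{foun}}) then yields a sequence $\{\pm\phi_k\}_{k=1}^{+\infty}$ of critical points of $\mathfrak{E}$ whose critical values tend to $+\infty$; by the weak formulation \eqref{a1} each such critical point is a weak solution of \eqref{eq11}, giving exactly the asserted conclusion $\mathfrak{E}(\pm\phi_k)\to+\infty$ as $k\to+\infty$. The extra hypothesis $(f_4)$ listed in the statement is not needed for this particular conclusion; it would become relevant only if one instead sought the sequence of negative-energy solutions furnished by the Dual Fountain Theorem (\textbf{Lemma \ref{dual}}), as in \textbf{Theorem \ref{Teorema3.4}}. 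I expect no genuine obstacle here: the single substantive step is confirming $(C_1)$ for $\mathfrak{M}(t)=a+bt$, after which the result is an immediate specialization of the already-established \textbf{Theorem \ref{Teorema3.3}}, requiring no new analysis.
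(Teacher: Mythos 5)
Your proof is correct, and the computation verifying $(C_0)$ (with $m_0=a$) and $(C_1)$ (with $\omega=\tfrac{4}{5}$) for $\mathfrak{M}(t)=a+bt$ is precisely the one the paper carries out before listing its corollaries. Where you and the paper part ways is in which main theorem gets specialized. The paper presents this corollary as the specialization of \textbf{Theorem \ref{Teorema3.4}} --- it is the fourth corollary mirroring the fourth theorem, which is why $(f_4)$ appears among the hypotheses --- whereas you specialize \textbf{Theorem \ref{Teorema3.3}} and discard $(f_4)$ as redundant. Your choice is in fact the only way to prove the statement as literally printed: the stated conclusion $\mathfrak{E}(\pm\phi_k)\to+\infty$ is the Fountain Theorem conclusion of \textbf{Theorem \ref{Teorema3.3}}, not the Dual Fountain conclusion of \textbf{Theorem \ref{Teorema3.4}}, which produces negative critical values accumulating at $0$; moreover, \textbf{Theorem \ref{Teorema3.4}} also assumes $(f_2)$, which this corollary does not list, so the paper's intended derivation does not even have all of its hypotheses available. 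What each route buys: yours proves the corollary verbatim, at the price of exposing it as a repetition of the preceding corollary with one superfluous hypothesis; the paper's intended route would produce genuinely new information --- a second sequence of solutions with small negative energies converging to $0$ --- but that is a different conclusion from the printed one (the corollary's conclusion, like that of \textbf{Theorem \ref{Teorema3.4}} itself, evidently carries a typo). Your closing observation that $(f_4)$ matters only for the Dual Fountain argument of \textbf{Theorem \ref{Teorema3.4}} is exactly right.
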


\begin{remark} Note that, we can take other functions with respect to $\mathfrak{M}(t)$ and $t$ in {\rm Eq.(\ref{eq11})} and get other versions of Kirchhoff-type problems, {\color{blue}that is:}
\begin{itemize}
    \item $\mathfrak{M}(t)=a+bt$ with $t=\displaystyle\int_{\Lambda}\frac{1}{\kappa} \left\vert^{\rm H}\mathfrak{D}^{\mu,\nu;\,\psi}_{0+}\phi\right\vert^{\kappa}d \xi$.
    
    \item $\mathfrak{M}(t)=bt$ with $t=\displaystyle\int_{\Lambda}\frac{1}{\kappa(\xi)} \left\vert^{\rm H}\mathfrak{D}^{\mu,\nu;\,\psi}_{0+}\phi\right\vert^{\kappa(\xi)}d \xi$. Note that it also holds for $\kappa(\xi)=\kappa$.
    
    \item Note that, we only discuss the special cases above, starting from the particular choice of $\mathfrak{M}(t)=bt$, $t$ and $\kappa(\xi)$. However, it is also possible to obtain and discuss other special cases, through the limits of $\beta\rightarrow 0$, $\beta \rightarrow 1$ and the function $\psi(\cdot)$.
\end{itemize}
\end{remark}

Kirchhoff-type problems are of great interest, in particular, in recent years an approach involving fractional operators has gained prominence. After the results investigated above, some future questions can be addressed, namely:
\begin{itemize}
    \item Discuss the same objectives of the present article for Kirchhoff-type problems with double phase.
    
     \item Another investigation possibility is to modify the problem boundary condition (\ref{eq1}), to Neumann boundary.
\end{itemize}

\section*{Acknowledgements}

{\color{blue} The authors thank very grateful to the anonymous reviewers for their useful comments that led to improvement of the manuscript. Juan J. Nieto thanks the Agencia Estatal de Investigación (AEI) of Spain under Grant PID2020-113275GB-I00, cofinanced by the European Community fund FEDER, as
as well as Xunta de Galicia grant ED431C 2019/02 for Competitive Reference Research Groups (2019-22).}



\begin{thebibliography}{}
 
 \bibitem{Ambrosio}  Ambrosio, V., and T. Isernia. Concentration phenomena for a fractional Schrödinger‐Kirchhoff type equation. Math. Meth. Appl. Sci. 41.2 (2018): 615-645.
  
\bibitem{Arosio}   Arosio, A., and S. Panizzi. On the well-posedness of the Kirchhoff string. Trans. Amer. Math. Soc. 348.1 (1996): 305-330.

 
 \bibitem{Correa}  Correa, F. J. S. A., and R. G. Nascimento. On a nonlocal elliptic system of $p$-Kirchhoff-type under Neumann boundary condition. Math. Comput. Modell. 49.3-4 (2009): 598-604.
 
 \bibitem{Correa1}  Correa, F. J. S. A, and G. M. Figueiredo. On an elliptic equation of $p$-Kirchhoff type via variational methods. Bull. Australian Math. Soc. 74.2 (2006): 263-277.


 
 
 \bibitem{Dai}  Dai, G., and R. Hao. Existence of solutions for a $p(x)$-Kirchhoff-type equation. J. Math. Anal. Appl. 359.1 (2009): 275-284.
 
\bibitem{Dai1}   Dai, G., and D. Liu. Infinitely many positive solutions for a $p(x)$-Kirchhoff-type equation. J. Math. Anal. Appl. 359.2 (2009): 704-710.

\bibitem{Dai2}  Dai, G., and R. Ma. Solutions for a $p(x)$-Kirchhoff type equation with Neumann boundary data. Nonlinear Analysis: Real World Applications 12.5 (2011): 2666-2680.

 \bibitem{Diening}  Diening, L., P. Hasto, and A. Nekvinda. Open problems in variable exponent Lebesgue and Sobolev spaces. FSDONA04 proceedings (2004): 38-58.
 


\bibitem{Ezati} Ezati, R., and N. Nyamoradi. Existence and multiplicity of solutions to a $\psi$-Hilfer fractional $p$-Laplacian equations. Asian-European J. Math. (2022), 2350045.

\bibitem{Ezati1} Ezati, R., and N. Nyamoradi. Existence of solutions to a Kirchhoff $\psi$-Hilfer fractional $p$‐Laplacian equations. Math. Meth. Appl. Sci. 44.17 (2021): 12909-12920.

\bibitem{Fan12} Fan, X. L., and D. Zhao, On the generalized Orlicz-Sobolev space $W^{,k,p(x)}(\Omega)$, J. Gansu. Educ. College 12(1) (1998) 1–6.

\bibitem{20} Fan, X. L., and D. Zhao. On the spaces $L^{p(x)}$ and $W^{m,p(x)}$, J. Math. Anal. Appl. 263 (2001) 424–446.

\bibitem{17} Fan, X. L., J. S. Shen, and D. Zhao. Sobolev embedding theorems for spaces $W^{k,p(x)} (\Omega)$, J. Math. Anal. Appl. 262 (2001) 749–760.

 
\bibitem{Fan}   Fan, X.. On the sub-supersolution method for $p(x)$-Laplacian equations. J. Math. Anal. Appl. 330.1 (2007): 665-682.
 
  \bibitem{Fan2}  Fan, X. On nonlocal $p(x)$-Laplacian Dirichlet problems. Nonlinear Analysis: Theory, Methods \& Applications 72.7-8 (2010): 3314-3323.
 
 \bibitem{Fan1}  Fan, X.-L., and Q.-H. Zhang. Existence of solutions for $p(x)$-Laplacian Dirichlet problem. Nonlinear Analysis: Theory, Methods \& Applications 52.8 (2003): 1843-1852.


 \bibitem{Fiscella}  Fiscella, A., and P. Pucci. $p$-fractional Kirchhoff equations involving critical nonlinearities. Nonlinear Analysis: Real World Applications 35 (2017): 350-378.
 
 
 
 \bibitem{He} He, X., and W. Zou. Infinitely many positive solutions for Kirchhoff-type problems. Nonlinear Analysis: Theory, Methods \& Applications 70.3 (2009): 1407-1414.




 
 
 
 
 
  
 
 
 
 \bibitem{Mingqi}  Mingqi, X., V. D. Rădulescu, and B. Zhang. Fractional Kirchhoff problems with critical Trudinger–Moser nonlinearity. Calc. Var. Partial Diff. Equ. 58.2 (2019): 1-27.


\bibitem{Mingqi1}  Mingqi, X., V. D. Rădulescu, and B. Zhang. Combined effects for fractional Schrödinger–Kirchhoff systems with critical nonlinearities. ESAIM: Control, Opt. Calc. Var. 24.3 (2018): 1249-1273.

\bibitem{Naimen}  Naimen, D. The critical problem of Kirchhoff type elliptic equations in dimension four. J. Diff. Equ. 257.4 (2014): 1168-1193.
 
\bibitem{AN} Ourraoui, A. On an elliptic equation of $p$-Kirchhoff type with convection term. Comptes Rendus. Mathématique 354.3 (2016): 253-256.
 
 \bibitem{Pucci}  Pucci, P., M. Xiang, and B. Zhang. Multiple solutions for nonhomogeneous Schrödinger–Kirchhoff type equations involving the fractional $p$-Laplacian in $\mathbb {R}^{N}$." Calc. Var. Partial Diff. Equ. 54.3 (2015): 2785-2806.
 
 
 \bibitem{Pucci1}  Pucci, P., M. Xiang, and B. Zhang. Existence and multiplicity of entire solutions for fractional $p$-Kirchhoff equations. Adv. Nonlinear Anal. 5.1 (2016): 27-55.

 
 \bibitem{J1} Sousa, J. Vanterler da C., and E. Capelas de Oliveira. On the $\psi$-Hilfer fractional derivative. Commun. Nonlinear Sci. Numer. Simul. 60 (2018), 72-91. 
 
 
\bibitem{Sousa1} Sousa, J. Vanterler da C., J. Zuo, and Donal O'Regan. The Nehari manifold for a $\psi$-Hilfer fractional $p$-Laplacian. Applicable Anal. (2021), 1-31.

\bibitem{Sousa80} Sousa, J. Vanterler da C. Existence and uniqueness of solutions for the fractional differential equations with $p$-Laplacian in $\mathbb{H}^{\nu,\eta;\psi}_{p}$. J. Appl. Anal. Comput. 12(2) (2022), 622-661.

\bibitem{Sousa} Sousa, J. Vanterler da C., C. T. Ledesma, M. Pigossi, Jiabin Zuo. Nehari Manifold for Weighted Singular Fractional $p$-Laplace Equations. Bull. Braz. Math. Soc. (2022): 1-31.


\bibitem{Sousa2} Sousa, J. Vanterler da C. Nehari manifold and bifurcation for a $\psi$-Hilfer fractional $p$‐Laplacian. Math. Meth. Appl. Sci. (2021). doi.org/10.1002/mma.7296.

\bibitem{Sousa3} Sousa, J. Vanterler da C., Leandro S. Tavares, and César E. Torres Ledesma. A variational approach for a problem involving a $\psi$-Hilfer fractional operator. J. Appl. Anal. Comput. 11.3 (2021), 1610-1630.

\bibitem{Srivastava} Srivastava, H. M., and J. Vanterler da C. Sousa. Multiplicity of Solutions for Fractional-Order Differential Equations via the $\kappa(x)$-Laplacian Operator and the Genus Theory. Fractal and Fractional 6.9 (2022): 481.


\bibitem{Tang}  Tang, X.-H., and B. Cheng. Ground state sign-changing solutions for Kirchhoff type problems in bounded domains. J. Diff. Equ. 261.4 (2016): 2384-2402.
 
 
 \bibitem{Willem} Willem, M. Minimax Theorems, Birkhäuser, Boston, 1996.

\bibitem{Xiang1}  Xiang, M., B. Zhang, and V. D. Rădulescu. Multiplicity of solutions for a class of quasilinear Kirchhoff system involving the fractional $p$-Laplacian. Nonlinearity 29.10 (2016): 3186.

\bibitem{14} Zeider, E., Nonlinear Functional Analysis and its Applications, II=B: Nonlinear Monotone Operators, Springer, New York, 1990.
\end{thebibliography}
\end{document}